\theoremstyle{plain}
\newtheorem{theorem}{Theorem}
\newtheorem{corollary}[theorem]{Corollary}
\newtheorem{lemma}[theorem]{Lemma}
\newtheorem{proposition}[theorem]{Proposition}
\theoremstyle{definition}
\newtheorem{definition}[theorem]{Definition}
\newtheorem{example}[theorem]{Example}
\newtheorem{remark}[theorem]{Remark}
\newcommand{\CU}{\mathcal{U}}
\newcommand{\CC}{{\mathcal C}}
\newcommand{\CI}{{\mathcal I}}
\newcommand{\N}{{\mathbb N}}
\newcommand{\Q}{{\mathbb Q}}
\newcommand{\Z}{{\mathbb Z}}
\newcommand{\id}{\mathrm{id}}
\newcommand{\boldP}{{\mathbf P}}
\newcommand{\CFP}{\mathcal{FP}}
\newcommand{\CFG}{\mathcal{FG}}
\newcommand{\Ima}{\mathrm{Im}}
\newcommand{\Ker}{\mathrm{Ker}}
\newcommand{\Coker}{\mathrm{Coker}}
\newcommand{\Pext}{\mathrm{Pext}}
\newcommand{\Gen}{\mathrm{Gen}}
\newcommand{\frakF}{\mathfrak{F}}
\newcommand{\Modr}{\mathrm{Mod}\text{-}}
\newcommand{\Dev}{\mathrm{Def}}
\newcommand{\Hom}{\operatorname{Hom}}
\newcommand{\Ext}{\operatorname{Ext}}
\begin{document}

\title[Defect functor and direct unions]{The defect functor of a homomorphism and direct unions}
\author{Simion Breaz} \thanks{Research supported by the
CNCS-UEFISCDI grant PN-II-RU-PCE-2012-4-0100}
\address{Babe\c s-Bolyai University, Faculty of Mathematics
and Computer Science, Str. Mihail Kog\u alniceanu 1, 400084
Cluj-Napoca, Romania}
\email{bodo@math.ubbcluj.ro}

\author{Jan \v Zemli\v cka}
\address{Department of Algebra, Charles University in Prague, Faculty of Mathematics and Physics Sokolovsk\' a 83, 186 75 Praha 8, Czech Republic}
\email{zemlicka@karlin.mff.cuni.cz}

\date{\today}

\subjclass[2000]{18G15, 16E30, 16E05}

\thanks{}

\begin{abstract}
We will study commuting properties of the defect functor $\Dev_\beta=\Coker\Hom_\CC(\beta,-)$ 
associate to a homomorphism $\beta$ in a finitely presented category. As an application,
we characterize objects $M$ such that $\Ext^1_\CC(M,-)$
commutes with direct unions (i.e. direct limits of monomorphisms), assuming that $\CC$ has a generator 
which is a direct sum of finitely presented projective objects.
 \end{abstract}

\keywords{defect functor, coherent functor, direct limit, direct union, Ext-functor}

\maketitle


\section{Introduction}\label{Intro}

Commuting properties of some canonical functors defined on some
 categories play important roles in the study of various
mathematical objects. For instance, finitely presented objects in a category with directed colimits are defined by
the condition that the induced covariant Hom-functor commuted with respect to all directed colimits. 
{In the case of 
module categories the equivalence between the property used in this definition
and the classical notion of finitely presented module was proved by Lenzing in \cite{Le}.} 
In that paper it is also proved that there are
strong connections between commuting properties of covariant Hom-functors and commuting properties of tensor product functors
with respect to direct products. These connections were extended to the associated derived functors in
\cite{BiH} and \cite{Brown}.  Moreover, Drinfeld proposed in
\cite{Drin} to use flat Mittag-Leffler modules in order to
construct a theory for infinite dimensional vector bundles. Recent progresses in this directions were obtained in \cite{BaS},
\cite{EAPT} and \cite{EAT}.
Auslander introduced in \cite{Aus} the class of coherent functors, and 
W. Crawley-Boevey characterized (in the case of module categories) these functors as those covariant functors which commutes 
with respect to direct limits and direct products, \cite[Lemma 1]{Cra}. 
This result was extended to locally finitely presented categories by H. Krause, \cite[Chapter 9]{Kr}. 
The influence of these functors is presented in \cite{Cra} and \cite{hart}.

Brown \cite{Brown} and Strebel \cite{Strebel} used
commuting properties of covariant $\Ext^1_\CC$-functors with respect to 
direct limits in order to characterize groups of type (FP). In
module theory an important ingredient used in the study of
tilting classes (e.g. \cite[Lemma 5.2.18 and Theorem 5.2.20]{GT06}) is a homological characterization, \cite[Theorem
4.5.6]{GT06}, of the closure $\underrightarrow{\lim}\CC$, where
$\CC$ is a class of $FP_2$-modules. This is based on the fact that
$\Ext_R^1(M,-)$ commutes with respect to direct limits whenever $M$
is an $FP_2$-module, \cite[Lemma 3.1.6]{GT06}. In the case of
Abelian groups, commuting properties of $\Ext^1$ functors with
respect particular direct limits were also studied in \cite{ABS}
and \cite{S11}.

In this paper we will focus on commuting properties with respect to 
direct limits for the defect functor associated to a homomorphism in a 
locally finitely presented abelian category. Let us introduce basic notions
which we will use in the sequel.
Let $M$ be an object in an additive category $\CC$ with directed colimits, and $G:\CC\to Ab$ a covariant functor. 
Furthermore suppose that $\frakF=(M_{i},
v_{ij})_{i,j\in I}$ is a directed system of objects in $\CC$ such that there exists $\underrightarrow{\lim}M_i$. 
and let $v_{i}:M_i\to \underrightarrow{\lim}M_i$ be the
canonical homomorphisms. {Then $(G(M_i), G(v_{ij}))$ is also a direct system, and we denote by 
$\underrightarrow{\lim}G(M_i)$ its direct limit.} Moreover,
we have a canonical homomorphism
\[\Gamma_\frakF:\underrightarrow{\lim}G(M_i)\to G(\underrightarrow{\lim} M_i)\] induced by the homomorphisms
$G(v_{i}):G(M_i)\to G(\underrightarrow{\lim} M_i)$, $i\in I$. 

We say that $G$ \textsl{commutes with respect to $\frakF$} if $\Gamma_\frakF$ is an isomorphism. 
The functor $G$ \textsl{commutes with respect to direct
limits} (\textsl{direct unions}, resp. \textsl{direct sums}) if the homomorphisms
$\Gamma_\frakF$ are isomorphisms for all directed systems $\frakF$
(such that all $v_{ij}$ are monomorphisms, resp. all direct sums).


Let $\CC$ be an additive category with direct limits. We recall
from \cite{AR} and \cite{ARV} that an object $M$ is \textsl{finitely
presented} (\textsl{finitely generated}) respectively if and only if $\Hom_\CC(M,-)$
commutes with respect to direct limits (of monomorphisms), i.e. the canonical
homomorphisms
\[
\Psi^{M}_\frakF:\underrightarrow{\lim}\Hom_\CC(M,M_i)\to
\Hom_\CC(M,\underrightarrow{\lim} M_i)\]
are isomorphisms for all direct systems $\frakF=(M_i, v_{ij})$ (such that all $v_{ij}$ are monomorphisms).
The category $\CC$ is \textsl{finite accessible} if $\CC$ has directed colimits and 
every object is a direct limit of finitely presented objects.
A cocomplete finitely accessible category $\CC$ is a \textsl{locally finitely presented category}. 


The notion of defect functor associated to a homomorphism 
extends the defect functor of an exact sequence used in \cite{Aus-Rei}. This functor represents generalizations 
for the following canonical functors: the $\Hom$-covariant functor induced by an object, the $\Pext$-covariant functor induced by an object, 
respectively the $\Ext^1$-covariant functor in the case when $\CC$ is a functor category. 

In Section \ref{Sect-defect} we introduce the defect functor $\Dev_\beta:\CC\to Ab$ associated to a homomorphism $\beta$, 
and we establish some basic properties for this functor. In Theorem~\ref{theorem-dev-restriction}  we show that the 
canonical decomposition of $\beta$ induced a short exact sequence of defect functors. 
Since $\Dev_\beta$ commutes with respect to direct products, we can apply \cite{Cra} and \cite{Kr-coll} to manage 
the case when $\Dev_\beta$ commutes with respect to all direct limits. 
Therefore we will focus our study to commuting 
properties with respect to particular direct limits.

In Section \ref{Sect-direct-limits} we study 
when the natural homomorphism 
$\Phi^{\beta}_\frakF:\underrightarrow{\lim}\Dev_\beta(M_i)\to \Dev_\beta(\underrightarrow{\lim}
M_i),$
where $\frakF=(M_{i}, v_{ij})_{i,j\in I}$ is a directed family in $\CC$, is an epimorphism. It is proved that 
$\Phi^{\beta}_\frakF$ is an epimorphism for all directed family $\frakF$ (of monomorphisms) if and only if $\beta$ is 
a section in the quotient category of $\CC$ modulo the ideal of all homomorphisms which factorizes through 
a finitely presented (generated) object. 
We apply these results in Sections \ref{Sect-direct-unions} and \ref{sect-dev-direct-sums} in order to 
characterize the homomorphisms $\beta$ such that 
$\Dev_\beta$ commutes with respect to direct unions or direct sums. Assuming that there is no $\omega$-measurable
cardinal we prove that it is enough to consider only commuting of $\Dev_\beta$
with respect to countable direct sums (Proposition~\ref{measurable}).

In Section \ref{Ext-unions}
(this section includes the results proved in the unpublished manuscript \cite{Br-uni}) 
we apply the previous results to characterize objects $M$ in a functor category with the property
that the functor $\Ext_\CC^1(M,-)$ commutes with respect to direct unions (Theorem \ref{main-th}).   
These are exactly the direct summands in direct sums of projective objects
and finitely presented objects. In \cite[Section 6]{Drin} the author used these objects (called, \textsl{2-almost
projective modules}) in order to study various kind of objects, e.g.
differentially nice $k$-schemes are defined using 2-almost
projective modules. These
objects are also studied in \cite{fin-r} for the case of
quasivarieties, cf. \cite[Proposition 4.3]{fin-r}.


For the case of coherent categories
these are exactly those objects such that the induced
$\Ext^1_\CC$-covariant functor commutes with respect to direct limits
(Corollary \ref{coherent}). We mention that in fact the structure
of these objects can be very complicated. For such an example we
refer to \cite[Lemma 4.3]{Pu}.

Furthermore, Theorem~\ref{thm2} gives a description of objects $M$ for which $\Ext_\CC^1(M,-)$ 
commutes with respect to direct sums using some splitting properties of
projective presentations of $M$.  We close the paper with a discussion about 
steadiness relative to $\Ext^1$, i.e. the condition when commuting of $\Ext^1(M,-)$ with respect to direct sums implies
commuting of $\Ext^1(M,-)$ with respect to direct unions.

In this paper $\CC$ will denote an \textsl{locally finitely presented abelian category,} 
i.e.
$\CC$ is a Grothendieck category with a generating set of finitely presented objects. Therefore, an object 
is finitely generated iff it is an epimorphic image of a finitely presented object \cite[Proposition 1.69]{AR}, 
and the structural homomorphisms associated to direct unions are monomorphisms by \cite[Proposition 1.62]{AR}.

\section{The defect functor associated to a homomorphism}\label{Sect-defect}

In order to define the defect functor $\Dev_\beta$ associated to a homomorphism $\beta$ 
it is useful to consider, as in \cite{Kr}, 
the big category $(\CC, Ab)$ of all additive covariant functors from $\CC$ into the category of all abelian groups. 
Albeit $(\CC,Ab)$ is not a category we can construct pointwise all notions which define abelian categories 
(kernels, cokernels, direct sums etc.), and the universal properties associated to these notions can be 
transfered from $\CC$ to $(\CC,Ab)$. For instance, if $\eta:F\to G$ is a natural transformation then we can define a functor 
$\Coker(\eta)$ and a natural transformation $\mu:G\to \Coker(\eta)$ in the following way: For all $X\in \CC$ we define $\Coker(\eta)(X)=\Coker(\eta_X)=G(X)/\Ima(\eta_X)$, and for every $\alpha:X\to Y$ we define $\Coker(\eta)(\alpha): \Coker(\eta)(X)\to \Coker(\eta)(Y)$ is the unique map which make the diagram 
\[\xymatrix{          
  F(X)\ar[d]^{F(\alpha)} \ar[r]^{\eta_X}& G(X)\ar[rr]^{\mu_X}\ar[d]^{G(\alpha)} && 
  G(X)/\Ima(\eta_X)\ar@{-->}[d]^{\Coker(\eta)(\alpha)}\ar[r] & 0\\
    F(Y) \ar[r]^{\eta_Y}& G(Y)\ar[rr]^{\mu_Y} && G(Y)/\Ima(\eta_Y)\ar[r] & 0\\ }\] commutative, where
$\mu_X:G(X)\to \Coker(\eta)(X)$ and $\mu_Y:G(Y)\to \Coker(\eta)(Y)$ are the canonical epimorphisms. 
It is not hard to see that $\Coker(\eta)$ is a functor, and the collection 
$\mu_X$ define a natural transformation $G\to \Coker(\eta)$ which has the 
same universal property as those which defines the classical cokernel in additive categories.    


\begin{definition} Suppose that $\beta : L\to P$ is a homomorphism in $\CC$. Then $\beta$ induces a natural transformation 
$\beta^*:\Hom(P,-)\to \Hom(L,-)$. 
The functor $$\Dev_\beta(-)=\Coker(\Hom(\beta,-))$$ will be called 
\textsl{the defect functor associated to $\beta$}.
\end{definition}

It is clear from the previous observation that $\Dev_\beta$ is characterized by the conditions:
\begin{enumerate}
\item[(i)] $\Dev_\beta(X)=\Hom(L,X)/\Ima(\Hom(\beta,X))$ for each object $X$ and 
\item[(ii)] $\Dev_\beta(\gamma)(\alpha+B_X)=\gamma\alpha+B_Y$ for each objects $X,Y$ and homomorphisms $\gamma\in \Hom(X,Y)$, $\alpha\in\Hom(K,X)$
where $B_X=\Ima(\Hom(\beta,X))$ and $B_Y=\Ima(\Hom(\beta,Y))$.
\end{enumerate}

In fact, if $f:X\to Y$ is a homomorphism then we have a commutative diagram: 
\begin{equation*}
\label{basic-diagram}
 \begin{CD}
\Hom(P,X)@>>> \Hom(L,X)@>>>\Dev_\beta(X)@>>>0\\
@VVV @VVV@VVV@.@.\\
\Hom(P,Y)@>>> \Hom(L,Y)@>>>\Dev_\beta(Y)@>>>0. 
\end{CD}
\end{equation*}

Here are some examples: 

\begin{example}\label{dev} Let $\beta : L\to P$ be a homomorphism in $\CC$.
\begin{enumerate}
\item If $\CC$ is abelian, $P$ is projective and $\beta$ a monomorphism, then $\Dev_\beta(-)$ is canonically equivalent 
to $\Ext^1(P/\beta(L),-)$.
\item If $P=0$, then $\Dev_\beta(-)$ is canonically equivalent to $\Hom(L,-)$.
\item If $\beta$ is an epimorphism and $\upsilon:K\to L$ is the kernel of $\beta$ then $\Dev_\beta(-)$ represents the 
covariant defect functor associated to the exact sequence $0\to K\overset{\upsilon} \to L\overset{\beta}\to P\to 0$, 
\cite{Krause-def}. 
\item If $R$ is a unital ring, $\CC=\Modr R$, and $L$ and $P$ are finitely generated and projective 
then $\Dev_\beta(R)$ represents the transpose of $P/\beta(L)$.
\end{enumerate}
\end{example}



In the following we will prove some general properties of defect functors.
Since in the category of all abelian groups the direct products are exact, it is easy to see that $\Dev_\beta$ commutes 
with respect to direct products. Moreover, in many situations the study of these functors can be reduced to the study of defect functors 
associated to monomorphisms or to epimorphisms.

\begin{theorem}\label{theorem-dev-restriction}
Let $\beta:L\to P$ be a homomorphism in the abelian category $\CC$. If $i_K:K\to L$ is the kernel of $\beta$, $\pi_K:L\to L/K$ is the canonical
epimorphism, and $\overline{\beta}:L/K\to P$ is 
the homomorphism induced by $\beta$ then there exists a canonical exact sequence of functors and natural transformations 
$$0\to \Dev_{\overline{\beta}}\to \Dev_\beta\to \Dev_{\pi_K}\to 0.$$
\end{theorem}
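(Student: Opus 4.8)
The plan is to evaluate everything at a fixed object $X\in\CC$ and reduce the statement to an elementary fact about cokernels of a composite of two maps of abelian groups, and then to check that the resulting pointwise exact sequences are natural in $X$. Applying the contravariant functor $\Hom(-,X)$ to the canonical factorization $\beta=\overline{\beta}\,\pi_K$ produces a composable pair
\[
\Hom(P,X)\xrightarrow{\ \overline{\beta}^*\ }\Hom(L/K,X)\xrightarrow{\ \pi_K^*\ }\Hom(L,X)
\]
whose composite is $\beta^*=\Hom(\beta,X)$. By definition $\Dev_{\overline{\beta}}(X)$, $\Dev_\beta(X)$ and $\Dev_{\pi_K}(X)$ are the cokernels of $\overline{\beta}^*$, $\beta^*$ and $\pi_K^*$, respectively, so the whole assertion becomes a statement about these three cokernels.

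Next I would invoke the following elementary observation: for maps $A\xrightarrow{f}B\xrightarrow{g}C$ of abelian groups with composite $h=gf$, the map $g$ carries $\Ima f$ into $\Ima h=g(\Ima f)$ and hence induces $\overline{g}\colon\Coker f\to\Coker h$, while the inclusion $\Ima h\subseteq\Ima g$ induces a surjection $\Coker h\to\Coker g$; the sequence $\Coker f\xrightarrow{\overline g}\Coker h\to\Coker g\to0$ is always exact, and $\overline g$ is injective as soon as $g$ is injective, since then $g^{-1}(\Ima h)=g^{-1}(g(\Ima f))=\Ima f$. In the present situation $g=\pi_K^*$ is injective precisely because $\pi_K$ is an epimorphism, so that precomposition with $\pi_K$ is a monomorphism on Hom-groups. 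This yields, for each $X$, a short exact sequence $0\to\Dev_{\overline{\beta}}(X)\to\Dev_\beta(X)\to\Dev_{\pi_K}(X)\to0$, where the first map is induced by $\pi_K^*$ (that is, $[h]\mapsto[h\pi_K]$) and the second is the canonical projection $\Hom(L,X)/\Ima\beta^*\to\Hom(L,X)/\Ima\pi_K^*$ coming from $\Ima\beta^*\subseteq\Ima\pi_K^*$.

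Finally I would promote these pointwise maps to natural transformations. The action of each defect functor on a morphism $\gamma\colon X\to Y$ is postcomposition with $\gamma$, whereas the maps defining the sequence are induced by precomposition with the fixed morphisms $\pi_K$ and $\overline{\beta}$; since postcomposition and precomposition commute, the relevant squares commute and both maps are genuine natural transformations. Exactness of a sequence in $(\CC,Ab)$ is tested objectwise, so the pointwise short exact sequences assemble into the asserted exact sequence of functors, and it is canonical because every map is induced by the canonical factorization of $\beta$.

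I expect the only real subtlety to be the left-hand injectivity $0\to\Dev_{\overline{\beta}}\to\Dev_\beta$: this is exactly the place where the hypothesis that $\pi_K$ is an epimorphism (hence $\pi_K^*$ a monomorphism) is used, and it is worth remarking that the monomorphism property of $\overline{\beta}$ plays no role here. Everything else amounts to the second and third isomorphism theorems applied inside each Hom-group, together with the essentially automatic naturality in $X$.
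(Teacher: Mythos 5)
Your proof is correct, and it reaches the conclusion by a genuinely more elementary route than the paper. The paper extends $\beta$ to the four-term exact sequence $0\to K\to L\overset{\beta}\to P\to M\to 0$ with $M=\Coker(\beta)$, splits it into $0\to K\to L\to L/K\to 0$ and $0\to L/K\overset{\overline{\beta}}\to P\to M\to 0$, applies the Hom-functors to get a commutative diagram whose rows are the presentations $0\to (M,-)\to (P,-)\to (L/K,-)\to \Dev_{\overline{\beta}}\to 0$ and $0\to (M,-)\to (P,-)\to (L,-)\to \Dev_\beta\to 0$, and then invokes the Ker--Coker lemma. You bypass all of that: you use only the factorization $\beta=\overline{\beta}\,\pi_K$ together with the elementary fact that a composite $h=gf$ of maps of abelian groups gives an exact sequence $\Coker f\to\Coker h\to\Coker g\to 0$, left exact as soon as $g$ is injective, and you correctly locate the injectivity of $g=\pi_K^*$ in the fact that $\pi_K$ is an epimorphism. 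Your lemma is stated and applied correctly, the maps you obtain (the one induced by $\pi_K^*$ on quotients, and the projection coming from $\Ima\beta^*\subseteq\Ima\pi_K^*$) coincide with the paper's canonical maps, and the naturality check is routine as you say. Your route never introduces $M$ or the snake lemma and, as you observe, never uses that $\overline{\beta}$ is a monomorphism, so it in fact proves the marginally more general statement that any factorization of $\beta$ through an epimorphism yields such a short exact sequence of defect functors. What the paper's heavier diagram buys in exchange is the explicit four-term presentations of $\Dev_{\overline{\beta}}$ and $\Dev_\beta$ by representable functors, which reappear in later arguments (for instance in diagram \eqref{D10} and in the analogous computation of Section \ref{Sect-direct-unions}).
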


\begin{proof}
Starting with the exact sequence 
$$ 0\to K\overset{\iota_K} \to L\overset{\beta}\to P\to M\to 0,$$ where $M$ is the cokernel of $\beta$, we obtain 
the short exact sequences
$$0\to K\overset{\iota_K} \to L\overset{\pi_K}\to  L/K\to 0$$ and 
$$0\to L/K\overset{\overline{\beta}_K}\to  L\to M\to 0.$$ 

Passing to the Hom covariant functors induced by the objects involved in the previous exact sequences we obtain, 
using the Ker-Coker Lemma, the 
following commutative diagram of functors and natural transformations:
\[\xymatrix{          
&  & & 0\ar[d] &0\ar[d]  &\\
0\ar[r] & (M,-) \ar@{=}[d] \ar[r] & (P,-) \ar@{=}[d] \ar[r]^{\overline{\beta}^*} & (L/K,-)\ar[r] \ar[d] &
\Dev_{\overline{\beta}}(-) \ar[r] \ar[d]	&  0 \\ 
0\ar[r] & (M,-)  \ar[r] & (P,-)  \ar[r]^{\beta^*} & (L,-) \ar[r] \ar[d]	& \Dev_\beta(-) \ar[r] \ar[d] & 0\\ 
 &     &  & \Dev_{\pi_K}\ar@{=}[r]\ar[d]&\Dev_{\pi_K} \ar[d]   \\
		& &  & 0& 0 & ,\\
	}\]
hence the statement of the theorem is proved.
\end{proof}

\begin{proposition}\label{half-exact}
If $\beta:L\to P$ is a homomorphism in $\CC$, the following statements are true:
\begin{enumerate}
 \item If $P$ is projective and $M=\Coker(\beta)$ then every exact sequence $$0\to X\to Y\to Z\to 0$$ 
 induces an exact sequence 
 \[0\to (M,X)\to (M,Y)\to (M,Z)\to \Dev_\beta(X)\to \Dev_\beta(Y)\to \Dev_\beta(Z).\]

  \item If $L$ is projective then $\Dev_\beta$ preserves the epimorphisms.

    \item If $L$ and $P$ are projective then $\Dev_\beta$ is a right exact functor.
 \end{enumerate}
\end{proposition}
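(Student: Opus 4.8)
The plan is to deduce all three statements from one diagram chase carried out pointwise in $Ab$ (for fixed objects $X,Y,Z$), so that no subtlety of the big category $(\CC,Ab)$ enters; naturality of the resulting maps is then automatic. The observation I would record first is that, since $M=\Coker(\beta)$, the sequence $L\xrightarrow{\beta}P\to M\to 0$ is exact, and applying the left-exact contravariant functor $\Hom(-,X)$ identifies $\Hom(M,X)$ with $\Ker(\beta^*_X)$, where $\beta^*_X\colon\Hom(P,X)\to\Hom(L,X)$ is $\Hom(\beta,X)$. By definition $\Dev_\beta(X)=\Coker(\beta^*_X)$. Thus $\Hom(M,-)$ and $\Dev_\beta$ are precisely the kernel and cokernel of the natural transformation $\beta^*\colon\Hom(P,-)\to\Hom(L,-)$, and this identification uses nothing about projectivity.

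For (1) I would apply the snake lemma to the ladder obtained by applying $\Hom(P,-)$ and $\Hom(L,-)$ to $0\to X\to Y\to Z\to 0$, with vertical arrows $\beta^*_X,\beta^*_Y,\beta^*_Z$. Here the projectivity of $P$ is exactly what makes the top row $\Hom(P,X)\to\Hom(P,Y)\to\Hom(P,Z)\to 0$ exact (in particular right exact), while left-exactness of $\Hom(L,-)$ makes the bottom row $0\to\Hom(L,X)\to\Hom(L,Y)\to\Hom(L,Z)$ exact. The snake lemma then yields the six-term exact sequence $\Ker\beta^*_X\to\Ker\beta^*_Y\to\Ker\beta^*_Z\to\Coker\beta^*_X\to\Coker\beta^*_Y\to\Coker\beta^*_Z$, and injectivity of $\Hom(P,X)\to\Hom(P,Y)$ (from $X\to Y$ being a monomorphism) supplies the leading $0$. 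Substituting the identifications of the first paragraph gives exactly the asserted sequence $0\to(M,X)\to(M,Y)\to(M,Z)\to\Dev_\beta(X)\to\Dev_\beta(Y)\to\Dev_\beta(Z)$; note that no trailing zero appears, since epimorphism on the right would require $L$ projective, which is not assumed here.

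Statement (2) I would prove directly: for an epimorphism $Y\to Z$, projectivity of $L$ makes $\Hom(L,Y)\to\Hom(L,Z)$ surjective, and in the commutative square relating these to their quotients $\Dev_\beta(Y),\Dev_\beta(Z)$ the two vertical maps are the canonical surjections; chasing surjectivity around the square forces $\Dev_\beta(Y)\to\Dev_\beta(Z)$ to be onto. Finally, (3) is immediate by combining the two: when both $L$ and $P$ are projective, (1) gives exactness of $\Dev_\beta(X)\to\Dev_\beta(Y)\to\Dev_\beta(Z)$ at the middle term and (2) gives surjectivity at the right, which together are precisely the statement that $\Dev_\beta$ is right exact. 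I do not anticipate a genuine obstacle; the only point demanding care is the bookkeeping in the snake lemma — correctly matching which projectivity hypothesis produces which half of the exactness, and confirming that $\Ker\beta^*$ and $\Coker\beta^*$ really are $\Hom(M,-)$ and $\Dev_\beta$ and not some truncation of them.
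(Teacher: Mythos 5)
Your argument is correct and is essentially the paper's own proof made explicit: the paper assembles the same data into one commutative diagram whose exact columns are $0\to(M,-)\to(P,-)\to(L,-)\to\Dev_\beta(-)\to 0$ and declares the statements obvious, and your snake-lemma application to the middle two rows (with the identifications $\Hom(M,-)=\Ker\beta^*$ and $\Dev_\beta=\Coker\beta^*$) is exactly the chase that justifies that claim. The bookkeeping of which projectivity hypothesis yields which part of the exactness, and the absence of a trailing zero in (1), are all handled correctly.
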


\begin{proof}
Let $0\to X\to Y\to Z\to 0$ be an exact sequence. Applying the Hom-functors we obtain the following commutative diagram

\[\xymatrix{          
& 0\ar[d] & 0\ar[d] & 0\ar[d] & \\
0\ar[r] & (M,X)\ar[d]\ar[r] & (M,Y)\ar[d]\ar[r] & (M,Z)\ar[d] & \\
0\ar[r] & (P,X)\ar[d]\ar[r] & (P,Y)\ar[d]\ar[r] & (P,Z)\ar[d] & \\
0\ar[r] & (L,X)\ar[d]\ar[r] & (L,Y)\ar[d]\ar[r] & (L,Z)\ar[d] & \\
& \Dev_\beta(X)\ar[d]\ar[r] & \Dev_\beta(Y)\ar[d]\ar[r] & \Dev_\beta(Z)\ar[d] & \\
& 0 & 0 & 0 &, \\
}
\]
and the statements are obvious.
\end{proof}

\begin{remark}
Recently the defect functor associated to a homomorphism between projective object was involved in the study of silting
modules, \cite{Ange-silting}: a homomorphism $\beta:L\to P$ with $L$ and $P$ projective objects is a \textsl{silting
module} if $\Gen(P/\beta(L))=\Ker(\Dev_\beta)$. 
\end{remark}

\section{The defect functor and direct limits}\label{Sect-direct-limits}

Throughout the section we suppose that $L\overset{\beta}\to P\overset{\alpha}\to M\to 0$ is an 
exact sequence in $\CC$, $\frakF=(M_{i}, v_{ij})_{i,j\in I}$
is a direct system of objects in $\CC$, and $v_{i}:M_i\to \underrightarrow{\lim}M_i$ are the
canonical homomorphisms. Furthermore, we denote by
$$\Phi^{\beta}_\frakF:\underrightarrow{\lim}\Dev_\beta(M_i)\to \Dev_\beta(\underrightarrow{\lim}
M_i)$$ the natural homomorphisms induced by the families
$\Dev_\beta(v_{ij})$, $i,j\in I$, and
$\Dev_\beta(v_{i})$, $i\in I$. 
{Following the general definition considered in Section \ref{Intro}, 
we say that $\Dev_\beta(-)$ \textsl{commutes with respect to $\frakF$} if $\Phi^\beta_\frakF$ is an isomorphism. 
The functor $\Dev_\beta(-)$ \textsl{commutes with respect to direct
limits} (\textsl{direct unions}, resp. \textsl{direct sums}) if the homomorphisms
$\Phi^\beta_\frakF$ are isomorphisms for all directed systems $\frakF$
(such that all $v_i$ are monomorphisms, resp. all direct sums).}

We have the following useful commutative diagram

\begin{equation*}
\xymatrix{
0\ar[r] & \underrightarrow{\lim}(M,M_i) \ar[r] \ar[d]^{\Psi^M_\frakF} &
\underrightarrow{\lim}(P,M_i) \ar[r] \ar[d]^{\Psi^P_\frakF} &  \underrightarrow{\lim}(L,M_i)
\ar[d]^{\Psi^L_\frakF} \ar[r]^{\underrightarrow{\lim}\xi_i} & 
\underrightarrow{\lim}\Dev_\beta(M_i) \ar[r]\ar[d]^{\Phi^\beta_\frakF} & 0 \\
0\ar[r] & (M,\underrightarrow{\lim}M_i) \ar[r] &
(P,\underrightarrow{\lim}M_i) \ar[r]^{\beta^*} &
(L,\underrightarrow{\lim}M_i) \ar[r]^{\xi} & \Dev_\beta(\underrightarrow{\lim}M_i) \ar[r] & 0 }
\tag{D1} \label{D10}
\end{equation*}
whose rows are exact, where the natural homomorphisms $\Psi_\frakF^X$ are defined in Section \ref{Intro}.

{

Using this diagram we have the following simple consequences:

\begin{corollary} \label{cor-L=fg} \label{cor-L=fp}
\begin{enumerate} 
 \item 
If $L$ is finitely presented and $\beta:L\to P$ is a homomorphism, then for every direct family $\frakF$
the canonical homomorphism $\Phi_\frakF^\beta$ is an epimorphism.

\item If $L$ is finitely generated and $\beta:L\to P$ is a homomorphism, then for every direct family of monomorphisms $\frakF$
the canonical homomorphism $\Phi_\frakF^\beta$ is an epimorphism.
\end{enumerate}
\end{corollary}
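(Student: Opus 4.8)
The plan is to read off both statements directly from the commutative diagram \eqref{D10}, whose two rows are exact, by identifying which arrows are epimorphisms and then cancelling. The only hypothesis-dependent input I need is that the assumption on $L$ forces the vertical map $\Psi^L_\frakF$ to be an epimorphism; everything else is already encoded in \eqref{D10}. So I would first record the two epimorphisms supplied by exactness of the rows. Since the top row ends with $\underrightarrow{\lim}(L,M_i)\overset{\underrightarrow{\lim}\xi_i}\to\underrightarrow{\lim}\Dev_\beta(M_i)\to 0$, the map $\underrightarrow{\lim}\xi_i$ is surjective; since the bottom row ends with $(L,\underrightarrow{\lim}M_i)\overset{\xi}\to\Dev_\beta(\underrightarrow{\lim}M_i)\to 0$, the map $\xi$ is surjective.

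Next I would exploit commutativity of the right-hand square of \eqref{D10}, which gives the identity $\Phi^\beta_\frakF\circ(\underrightarrow{\lim}\xi_i)=\xi\circ\Psi^L_\frakF$. As soon as $\Psi^L_\frakF$ is an epimorphism, the right-hand side is a composite of two surjections and hence surjective; since in $Ab$ a composite $g\circ f$ being surjective forces $g$ to be surjective, I conclude that $\Phi^\beta_\frakF$ is an epimorphism. It then remains only to verify the epimorphism hypothesis on $\Psi^L_\frakF$ in each case. For part (1), $L$ is finitely presented, so by the characterization recalled in Section \ref{Intro} the map $\Psi^L_\frakF$ is an isomorphism, in particular an epimorphism, for \emph{every} direct system $\frakF$; this yields the conclusion for arbitrary $\frakF$. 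For part (2), $L$ is only finitely generated, so the same characterization guarantees that $\Psi^L_\frakF$ is an isomorphism, hence an epimorphism, precisely when the structural maps are monomorphisms, which is exactly the restriction to direct families of monomorphisms imposed in the statement.

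I do not expect a genuine obstacle here beyond organizing the diagram chase correctly; the real content of the corollary is the observation that the cokernel position in \eqref{D10} only sees the \emph{surjectivity} of its left-neighbouring vertical map $\Psi^L_\frakF$, so that the (generally nontrivial) failure of $\Psi^M_\frakF$ or $\Psi^P_\frakF$ to be isomorphisms is entirely irrelevant to whether $\Phi^\beta_\frakF$ is an epimorphism. The one point I would state carefully is that the argument uses only surjectivity of $\Psi^L_\frakF$, and this is exactly why the weaker finitely generated hypothesis, which controls $\Psi^L_\frakF$ only along monomorphic systems, already suffices for part (2).
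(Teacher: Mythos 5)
Your argument is correct and is exactly the diagram chase the paper intends: the corollary is stated as a ``simple consequence'' of diagram \eqref{D10}, and your identification $\Phi^\beta_\frakF\circ(\underrightarrow{\lim}\xi_i)=\xi\circ\Psi^L_\frakF$ together with surjectivity of $\xi$ and of $\Psi^L_\frakF$ (from the finitely presented, resp.\ finitely generated, hypothesis on $L$) is the whole content. Your closing observation that only surjectivity of $\Psi^L_\frakF$ matters, so the behaviour of $\Psi^M_\frakF$ and $\Psi^P_\frakF$ is irrelevant, is precisely why the paper can leave the proof implicit.
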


\begin{example}\label{ex-fg-non}
There exists a homomorphism $\beta:L\to P$ and a direct family $\frakF$ (of monomorphisms) such that $L$ is finitely presented
(generated) 
and the canonical homomorphism $\Phi_\frakF^\beta$ is not an isomorphism. 
\end{example}

\begin{proof}
Let $\CC$ be the category of all abelian groups. 
If $p$ is a prime number we denote by $\Z_p=\{\frac{m}{p^k}\mid m\in\Z,\ k\in \N\}\leq \Q$. 
If $\beta:\Z\to \Z_p$ is the canonical inclusion in the category of all abelian groups then for every torsion-free 
abelian group $A$ we have a natural isomorphism $$\Dev_\beta(A)\cong A/D_p(A),$$ 
where $D_p(A)$ is the maximal $p$-divisible subgroup of $A$. 

We can write the abelian group $\Q$ as a union of a chain of cyclic subgroups $F_n=\frac{1}{n!}\Z$, $n\in\N^*$, 
where the connecting  homomorphisms $u_{m,n}:F_m\to F_n$, $m<n$, are the inclusion maps. Since $\Hom(\Z_p,F_n)=0$ for
all $n>0$, it follows that we can identify $\Dev_\beta(F_n)=F_n$ and $\Dev_\beta(u_{m,n})=u_{m,n}$ for all $m,n\in\N^*$.
Then $\underrightarrow{\lim}\Dev_\beta(F_n)=\Q$. But $\Dev_\beta(\underrightarrow{\lim}F_n)=\Dev_\beta(\Q)=0$, hence 
$\Phi_\frakF^\beta:\Q\to 0$ is not a monomorphism.  
\end{proof}

We will use the following lemma:

\begin{lemma} An object $M$ is
finitely generated if and only if there exists an exact sequence $0\to
L\to P\to M\to 0$ with $P$ a finitely presented object. 

Consequently, if $M$ is finitely generated then for every direct system $\frakF$ 
the natural homomorphism $\Psi_\frakF^M$ is a monomorphism.
Moreover,
$M$ is finitely presented if and only if $L$ is finitely generated.
\end{lemma}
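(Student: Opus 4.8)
I would handle the three claims in turn, the converse in the last one being the only delicate point. \emph{Part 1} is a reformulation of \cite[Proposition 1.69]{AR}: if $M$ is finitely generated then it is an epimorphic image $\pi\colon P\to M$ of a finitely presented object, and $0\to\Ker\pi\to P\to M\to 0$ is the required sequence; conversely a finitely presented $P$ is in particular finitely generated, so its quotient $M$ is finitely generated by the same proposition. I would simply record this.

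\emph{The consequence.} Fix a sequence $0\to L\xrightarrow{\iota}P\xrightarrow{\pi}M\to 0$ with $P$ finitely presented, and let $\frakF=(M_i,v_{ij})$ be any direct system. Applying $\Hom(-,M_i)$ and passing to the filtered colimit in $Ab$ (where filtered colimits are exact), and applying $\Hom(-,\underrightarrow{\lim}M_i)$ directly, yields a commutative ladder with left-exact rows and vertical maps $\Psi^M_\frakF,\Psi^P_\frakF,\Psi^L_\frakF$. Since $P$ is finitely presented, $\Psi^P_\frakF$ is an isomorphism, in particular a monomorphism; as every term is an abelian group I may argue with elements, and a one-line chase of the left-hand square forces $\Psi^M_\frakF$ to be a monomorphism.

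\emph{Part 3, the easy implication.} Assume $L$ is finitely generated and take an arbitrary $\frakF$. In the same ladder $\Psi^P_\frakF$ is an isomorphism and, by the consequence applied to $L$, $\Psi^L_\frakF$ is a monomorphism. Injectivity of $\Psi^M_\frakF$ follows exactly as above; for surjectivity I would lift a given $y\in\Hom(M,\underrightarrow{\lim}M_i)$ along $\Psi^P_\frakF$ to some $z$, note that $\Psi^L_\frakF$ kills the image of $z$ in $\underrightarrow{\lim}\Hom(L,M_i)$ (because $\iota^*\pi^*=0$), and conclude via injectivity of $\Psi^L_\frakF$ together with exactness of the top row that $z$ comes from $\underrightarrow{\lim}\Hom(M,M_i)$. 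Thus $\Psi^M_\frakF$ is an isomorphism for every $\frakF$, so $M$ is finitely presented.

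\emph{Part 3, the hard implication.} The obstacle here is that the ladder is not right exact, so surjectivity of $\Psi^L$ cannot be read off directly; instead I would test finite generation of $L$ against its own subobjects. Write $L=\bigcup_\lambda L_\lambda$ as the directed union of its finitely generated subobjects (possible since $\CC$ is locally finitely presented), so that $M=\underrightarrow{\lim}_\lambda P/L_\lambda$ (filtered colimits being exact) with structural maps the canonical epimorphisms $q_\lambda\colon P/L_\lambda\to M$ and transition maps $t_{\lambda\mu}\colon P/L_\lambda\to P/L_\mu$. Since $M$ is finitely presented, the identity of $M$ lifts: there are $\lambda$ and $s\colon M\to P/L_\lambda$ with $q_\lambda s=\id_M$. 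Now the quotient $p_\lambda\colon P\to P/L_\lambda$ and the composite $s\pi$ both map to $\pi\in\Hom(P,M)$ under $\Psi^P$, and injectivity of $\Psi^P$ (finite presentation of $P$) gives $\mu\ge\lambda$ with $p_\mu=t_{\lambda\mu}\circ s\pi$. Then $t_{\lambda\mu}s$ is epi because $p_\mu$ is, and mono because $q_\mu(t_{\lambda\mu}s)=\id_M$, hence an isomorphism; therefore $q_\mu$ is an isomorphism, so $L/L_\mu=\Ker q_\mu=0$ and $L=L_\mu$ is finitely generated.
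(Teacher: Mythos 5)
Your proof is correct. For the first claim, the ``consequently'' statement, and the implication that $L$ finitely generated implies $M$ finitely presented, you argue essentially as the paper does: cite \cite[Proposition 1.69]{AR} and chase diagram \eqref{D10}, exploiting that $\Psi^P_\frakF$ is an isomorphism because $P$ is finitely presented and that $\Psi^L_\frakF$ is a monomorphism because $L$ is finitely generated. Where you genuinely diverge is the converse, $M$ finitely presented implies $L$ finitely generated. The paper disposes of this with the same one-line appeal to the Ker--Coker lemma on \eqref{D10}, but a purely formal chase there does not obviously yield surjectivity of $\Psi^L_\frakF$ on direct unions: the relevant four-lemma step would need control of $\Phi^\beta_\frakF$, which is precisely what is not available at that stage. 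Your replacement argument --- write $L$ as the directed union of its finitely generated subobjects $L_\lambda$, realize $M$ as $\underrightarrow{\lim}_\lambda P/L_\lambda$ (a filtered colimit with \emph{epimorphic} transition maps, so it is finite presentation of $M$, not mere finite generation, that is consumed), split $q_\lambda$ at some index, and then use injectivity of $\Psi^P$ for this system to upgrade the splitting to an isomorphism $q_\mu$, whence $L=L_\mu$ --- is the classical one and actually supplies the step the diagram chase cannot. It is longer than the paper's sketch, but it is self-contained and makes visible exactly where finite presentation of $M$ and of $P$ are each used; the only background facts it relies on, namely that every object of $\CC$ is the directed union of its finitely generated subobjects and that exactness of filtered colimits gives $\underrightarrow{\lim}_\lambda P/L_\lambda\cong P/L\cong M$, are both available in a locally finitely presented Grothendieck category and are correctly invoked.
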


\begin{proof}
The first part is proved in \cite[Proposition 1.69]{AR}, while for the other statements
we can apply Ker-Coker Lemma on diagram \eqref{D10}.
\end{proof}
}

{Applying the above definitions to the diagram \eqref{D10}, it is not hard
approach that case when $\Hom(P,-)$ commutes with respect to direct sums, direct unions, respectively direct limits.
We recall that $P$ is called \textsl{small} if $\Hom(P,-)$ commutes with respect to direct sums. }

\begin{proposition}\label{basic-def}
Let $\beta:L\to P$ be a homomorphism. 
\begin{enumerate} 
\item
{Suppose that $P$ is a small object. The functor $\Dev_\beta$ commutes with respect to direct sums if and only
if $L$ is a small object.}

\item
Suppose that $P$ is a finitely generated object. The functor $\Dev_\beta$ commutes with respect to direct unions if and only
if $L$ is finitely generated.

\item Suppose that $P$ is a finitely presented object. Then $\Dev_\beta$ commutes with respect to direct limits if and
only if $L$ is finitely presented.
\end{enumerate}
\end{proposition}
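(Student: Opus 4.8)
The plan is to read everything off the commutative diagram \eqref{D10}, whose rows are exact and whose columns are the comparison maps $\Psi^M_\frakF,\Psi^P_\frakF,\Psi^L_\frakF,\Phi^\beta_\frakF$ attached to the four-term exact sequence of functors $0\to(M,-)\to(P,-)\xrightarrow{\beta^*}(L,-)\to\Dev_\beta\to 0$, where $M=\Coker(\beta)$. The first fact I would record is that for the two restricted classes of systems in play the leftmost columns come for free: whenever every structural map $v_i$ is a monomorphism (hence for all direct unions, and for all direct sums, where the $v_i$ are the canonical split inclusions), each $\Psi^X_\frakF$ is automatically a monomorphism, since a map $X\to M_i$ that dies in $\underrightarrow{\lim}M_i$ already dies along the mono $v_i$. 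Consequently, for such systems $X$ is small (resp. finitely generated) exactly when $\Psi^X_\frakF$ is an isomorphism, equivalently an epimorphism, for all direct sums (resp. direct unions).

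The second ingredient is a closure property of $M=\Coker(\beta)$: a quotient of a small object is small and a quotient of a finitely generated object is finitely generated, so in cases (1) and (2) the hypothesis on $P$ forces $M$ to inherit the same property. Combined with the automatic monomorphy above, this makes both $\Psi^M_\frakF$ and $\Psi^P_\frakF$ isomorphisms for the relevant systems. I would then split each row of \eqref{D10} at $\Ima(\beta^*)$ into two short exact sequences and apply the Ker--Coker Lemma twice. Since the two outer left columns are isomorphisms, the resulting snake sequences collapse and transfer invertibility directly between $\Psi^L_\frakF$ and $\Phi^\beta_\frakF$; this gives, in both (1) and (2), the equivalence ``$\Phi^\beta_\frakF$ iso for all relevant $\frakF$'' $\Leftrightarrow$ ``$\Psi^L_\frakF$ iso for all relevant $\frakF$'', i.e. $\Dev_\beta$ commutes iff $L$ is small (resp. finitely generated).

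Part (3) is where the argument genuinely breaks symmetry, and I expect it to be the main obstacle: when $P$ is only finitely presented, $M=\Coker(\beta)$ is merely finitely generated, so $\Psi^M_\frakF$ need not be an isomorphism for a general (non-monomorphic) system, and the clean transfer above is unavailable. For the implication ``$L$ finitely presented $\Rightarrow\Dev_\beta$ commutes'' I would restore the situation by noting that a cokernel of a morphism between finitely presented objects is again finitely presented (its Hom-functor is the kernel of a morphism of direct-limit-preserving functors), so $M$ is finitely presented, all three left columns become isomorphisms, and $\Phi^\beta_\frakF$ is an isomorphism by the five lemma. For the converse I would deliberately avoid $\Psi^M$: a four-lemma applied to the right-hand part of \eqref{D10} uses only that $\Psi^P_\frakF$ and $\Phi^\beta_\frakF$ are epimorphisms to conclude that $\Psi^L_\frakF$ is an epimorphism for every $\frakF$.

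Finally I would bootstrap from surjectivity to finite presentation. Specializing to direct unions, surjectivity of $\Psi^L_\frakF$ forces $L$ to be finitely generated: otherwise $L$ is the direct union of its proper finitely generated subobjects, and $\id_L$ cannot factor through any stage, contradicting the epimorphy of $\Psi^L_\frakF$ for that union. Finite generation of $L$ then makes $\Psi^L_\frakF$ a monomorphism for \emph{every} system by the Lemma above, so $\Psi^L_\frakF$ is in fact an isomorphism for all $\frakF$, i.e. $L$ is finitely presented. The delicate point throughout is bookkeeping: tracking which column is automatically a monomorphism for the class of systems at hand, and which invertibility has to be forced by a finiteness hypothesis on $L$ rather than imported from $M$.
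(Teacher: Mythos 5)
Your proposal is correct and takes essentially the same route as the paper: everything is read off diagram \eqref{D10}, the closure of small (resp.\ finitely generated) objects under epimorphic images inverts the $M$- and $P$-columns, and invertibility is transferred between $\Psi^L_\frakF$ and $\Phi^\beta_\frakF$ by a Ker--Coker chase. The only (harmless) divergence is in the forward direction of (3): the paper first reduces to direct unions to get $L$ finitely generated, deduces that $M$ is finitely presented, and then inverts $\Psi^L_\frakF$ by the five lemma, whereas you obtain surjectivity of $\Psi^L_\frakF$ for arbitrary systems by a four-lemma argument avoiding $\Psi^M_\frakF$ and injectivity from the preceding Lemma --- both chases are valid.
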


\begin{proof}
{(1) Let $\frakF=(M_{i})_{i\in I}$ be a family of objects in $\CC$. We construct
a diagram \eqref{D10} induced by the direct sum of $\frakF$. Since the class of small objects is closed with 
respect to epimorphic images,
$\Psi^M_\frakF$ and $\Psi^P_\frakF$ are isomorphisms. Therefore
$\Phi^\beta_\frakF$ is an isomorphism if and only if $\Psi^L_\frakF$
is an isomorphism. The conclusion is now obvious.

(2) The proof follows the same steps as for (1), using this time a direct system 
$\frakF=(M_{i}, \nu_{ij})_{i,j\in I}$ such that all $\nu_{ij}$ are monomorphisms, and the fact that the class of 
finitely generated objects is closed with respect to epimorphic images.
}

(3) Suppose that $\Dev_\beta$ commutes with respect to direct
limits. By what we just proved $L$ is finitely generated, hence $M$ is finitely presented.
Therefore, for every direct system $\frakF=(M_{i},
\nu_{ij})_{i,j\in I}$ the homomorphisms $\Psi^M_\frakF$ and
$\Psi^P_\frakF$ are isomorphisms. Therefore $\Psi^L_\frakF$ is an
isomorphism, hence $L$ is finitely presented.

Conversely, the objects $L$, $M$, and $P$ are finitely presented, hence the first three vertical maps in diagram (\ref{D10})
are isomorphisms. Then $\Phi_\frakF^\beta$ is also an isomorphism.
\end{proof}

Using the statement (2) in the above proposition we can reformulate the characterization presented in \cite[Lemma 1]{Cra}
for the case of direct unions. Since the proof is \textsl{verbatim} to Crawley-Boevey's proof, it is omitted.

\begin{theorem}\label{commuting-functors}
A functor $F:\CC\to Ab$ commutes with respect direct products and direct unions if and only if it is naturally 
isomorphic to a defect functor $\Dev_\beta$ associated to a homomorphism $\beta:L\to P$ with $L$ and $P$ finitely 
generated.
\end{theorem}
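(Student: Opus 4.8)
The plan is to adapt Crawley-Boevey's argument \cite{Cra} to the setting of direct unions, using Proposition~\ref{basic-def}(2) as the structural bridge. The forward implication is already essentially contained in the earlier results: if $\beta:L\to P$ is a homomorphism with $L$ and $P$ finitely generated, then $\Dev_\beta$ commutes with respect to direct products by the observation preceding Theorem~\ref{theorem-dev-restriction}, and it commutes with respect to direct unions by Proposition~\ref{basic-def}(2), since $P$ finitely generated forces $\Psi^P_\frakF$ to be a monomorphism (and an isomorphism on the relevant directed systems) while $L$ finitely generated makes $\Psi^L_\frakF$ an isomorphism for directed systems of monomorphisms. So the first thing I would do is record this direction in one or two sentences.

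The substance lies in the converse: given that $F$ commutes with respect to direct products and direct unions, I must manufacture finitely generated objects $L$ and $P$ together with $\beta:L\to P$ so that $F\cong\Dev_\beta$. Here I would follow Crawley-Boevey verbatim in spirit. The commuting-with-products hypothesis is exactly what lets one invoke the structure theorem identifying such a functor (after suitable finiteness reductions) with a coherent functor, i.e.\ one presented as the cokernel of a map between representable functors $\Hom(P,-)\to\Hom(L,-)$; this is the content of \cite[Lemma 1]{Cra}. Thus one obtains \emph{some} presentation $F\cong\Dev_\beta$ with $\beta:L\to P$. The remaining task is to arrange that $L$ and $P$ can be taken finitely generated rather than merely arbitrary, and this is where the direct-union hypothesis (as opposed to the full direct-limit hypothesis used by Crawley-Boevey) is spent.

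The key step, and the one I expect to be the main obstacle, is controlling the finiteness of $L$ and $P$ under the weaker direct-union assumption. In Crawley-Boevey's original the two-sided commuting with \emph{all} direct limits pins down $L$ and $P$ as finitely \emph{presented}; here we only have direct unions, and correspondingly Proposition~\ref{basic-def}(2) only yields finite \emph{generation}. The plan is to run through Crawley-Boevey's construction, at each point where he concludes finite presentation of the presenting objects, replacing the appeal to commuting with arbitrary direct limits by the appeal to commuting with direct unions, and checking via the diagram \eqref{D10} and Proposition~\ref{basic-def}(2) that the conclusion degrades exactly to finite generation and no further. Concretely, once a presentation $F\cong\Dev_\beta$ is in hand, commuting with direct unions combined with Proposition~\ref{basic-def}(2) (applied after first reducing $P$ to a finitely generated object, which one may do since $F$ is a quotient of $\Hom(L,-)$ and one can replace $P$ by a finitely generated subobject through which the relevant maps factor) forces $L$ to be finitely generated as well. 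Because every individual step is a literal transcription of Crawley-Boevey's with ``direct limit'' read as ``direct union'' and ``finitely presented'' read as ``finitely generated,'' I would simply cite \cite{Cra} and omit the repetition, as the statement already anticipates.
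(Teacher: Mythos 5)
Your proposal matches the paper exactly: the paper also records the forward direction via Proposition~\ref{basic-def}(2) and the remark that $\Dev_\beta$ commutes with direct products, and for the converse it simply states that the proof is \emph{verbatim} Crawley-Boevey's argument from \cite[Lemma 1]{Cra} with ``direct limit'' replaced by ``direct union'' and ``finitely presented'' by ``finitely generated'', omitting the details just as you do. No further comparison is needed.
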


{Using the same techniques as in \cite{Br-ext-lim}, it is not hard to see that when 
$L$ and $P$ are projective the three commuting properties considered in the Proposition \ref{basic-def}
are equivalent. 

\begin{proposition}\label{equiv-lim}
Let $\beta:L\to P$ be a homomorphism between projective right $R$-modules. Then the following are equivalent: 
\begin{enumerate} 
\item  $\Dev_\beta$ commutes with respect to direct limits;
\item
$\Dev_\beta$ commutes with respect to direct unions;
\item
$\Dev_\beta$ commutes with respect to direct sums;

\item $\Dev_\beta$ commutes with respect to direct sums of copies of $R$;
\end{enumerate}
Under these conditions $\Dev_\beta(R)$ is a finitely presented left $R$-module.
 \end{proposition}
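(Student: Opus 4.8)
The implications (1) $\Rightarrow$ (2) $\Rightarrow$ (3) $\Rightarrow$ (4) are immediate: direct unions are particular direct limits (those of monomorphisms), every direct sum $\bigoplus_{i\in I}M_i$ is the direct union of its finite partial sums, and direct sums of copies of $R$ are particular direct sums. Thus commuting with respect to a larger class of systems forces commuting with respect to each smaller one, and the whole content of the proposition lies in the implication (4) $\Rightarrow$ (1) together with the final assertion. The guiding idea is that projectivity of $L$ and $P$ lets us replace the delicate direct-limit bookkeeping of diagram \eqref{D10} by a single structural description of $\Dev_\beta$.

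The plan is to start from Proposition \ref{half-exact}(3): since $L$ and $P$ are projective, $\Dev_\beta$ is a right exact functor. I would then upgrade (4) from coproducts of copies of $R$ to arbitrary coproducts. Given a family $(X_i)_{i\in I}$, choose free presentations $R^{(B_i)}\to R^{(A_i)}\to X_i\to 0$ and take their coproduct to obtain a free presentation $R^{(B)}\xrightarrow{\phi} R^{(A)}\to \bigoplus_i X_i\to 0$, with $A=\bigsqcup_i A_i$ and $B=\bigsqcup_i B_i$. Applying the right exact functor $\Dev_\beta$ and using (4) to identify $\Dev_\beta(R^{(A)})\cong\bigoplus_A\Dev_\beta(R)\cong\bigoplus_i\Dev_\beta(R^{(A_i)})$ (and likewise over $B$), the map $\Dev_\beta(\phi)$ becomes $\bigoplus_i\Dev_\beta(\phi_i)$, so that $\Dev_\beta(\bigoplus_i X_i)=\Coker\Dev_\beta(\phi)\cong\bigoplus_i\Coker\Dev_\beta(\phi_i)\cong\bigoplus_i\Dev_\beta(X_i)$, again by right exactness. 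The one point needing care here is the verification that the resulting isomorphism is exactly the canonical comparison map $\Phi^\beta$; this follows from naturality of the isomorphisms supplied by (4). Hence (4) already implies (3).

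Once $\Dev_\beta$ is known to be right exact and to preserve all coproducts, I would invoke the classical Eilenberg--Watts theorem: such a functor $\Modr R\to Ab$ is naturally isomorphic to $-\otimes_R T$, where $T=\Dev_\beta(R)$ carries its canonical left $R$-module structure. Since $-\otimes_R T$ is a left adjoint it preserves all colimits, in particular all direct limits, which is precisely (1). This closes the cycle and proves (1)--(4) equivalent. The main obstacle throughout is conceptual rather than computational: it is the recognition that right exactness plus preservation of coproducts collapses the three a priori distinct commuting properties into the single statement $\Dev_\beta\cong-\otimes_R T$, after which the remaining implications are formal.

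For the last assertion, recall from Section \ref{Sect-defect} that $\Dev_\beta$ always commutes with respect to direct products. Under the equivalent conditions (1)--(4) we have $\Dev_\beta\cong-\otimes_R T$, so the functor $-\otimes_R T$ commutes with direct products. By Lenzing's characterization of finitely presented modules \cite{Le}, this happens exactly when $T=\Dev_\beta(R)$ is a finitely presented left $R$-module, which is the stated conclusion.
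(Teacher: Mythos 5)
Your proof is correct and follows essentially the same route as the paper: the chain (1)$\Rightarrow$(2)$\Rightarrow$(3)$\Rightarrow$(4) is formal, and (4)$\Rightarrow$(1) is obtained by combining the right exactness of $\Dev_\beta$ from Proposition \ref{half-exact} with the Eilenberg--Watts argument to identify $\Dev_\beta$ with $-\otimes_R\Dev_\beta(R)$, after which the final assertion follows from commutation with direct products and Lenzing's criterion. You merely spell out in more detail the step, internal to Watts's proof, that upgrades preservation of sums of copies of $R$ to preservation of arbitrary coproducts.
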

 
 \begin{proof}
(4)$\Rightarrow$(1) From Proposition \ref{half-exact} and from the proof of Watts's theorem \cite[Theorem 1]{Watts}, 
we obtain that
$\Dev_\beta(-)$ is naturally isomorphic 
to $-\otimes_R\Dev_\beta(R)$. Therefore it preserves direct limits. 

Moreover, 
if these equivalent conditions are satisfied the functor $-\otimes_R\Dev_\beta(R)$ preserves direct products. 
This is true exactly if
the left $R$-module $\Dev_\beta(R)$ is finitely presented.
 \end{proof}

It is well known that if $G:\CC\to Ab$ is an additive functor then for every family $\frakF=(M_i)_{i\in I}$ then natural
homomorphism $\oplus_{i\in I}G(M_i)\to G(\oplus_{i\in I}M_i)$ is a monomorphism. Therefore in the above proposition it is 
enough to verify if the natural homomorphisms $\Phi_\frakF^\beta$ are epimorphisms.
}

In the following we will study the case when the natural homomorphisms $\Phi_\frakF^\beta$ are epimorphisms.

\begin{lemma}\label{epi-dev}
Let $\beta:L\to P$ be a homomorphism, $\frakF=(M_{i}, v_{ij})_{i,j\in I}$ a direct system,
and let $f:L\to \underrightarrow{\lim}M_i$ be a homomorphism. Using the same notations as in diagram \eqref{D10}, 
the following are equivalent:
\begin{enumerate}
 \item $\xi(f)\in \Ima(\Phi^\beta_\frakF)$;
 \item there exists $k\in I$, $h:L\to M_k$ and $g:P\to \underrightarrow{\lim}M_i$ 
	such that $f=g \beta+v_k h$.
 \end{enumerate}
\end{lemma}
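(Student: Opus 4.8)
The plan is to read off both implications from the rightmost commutative square of diagram \eqref{D10}, together with the exactness of its two rows. I would record at the outset the three ingredients that drive the chase: first, commutativity of that square, namely $\xi\circ\Psi^L_\frakF=\Phi^\beta_\frakF\circ(\underrightarrow{\lim}\xi_i)$; second, exactness of the bottom row at $(L,\underrightarrow{\lim}M_i)$, so that $\Ker\xi=\Ima(\beta^*)$ and hence $\xi(\beta^*(g))=0$ for every $g:P\to\underrightarrow{\lim}M_i$; and third, the explicit form of the comparison map $\Psi^L_\frakF$, which sends the class $[h]_k$ of a morphism $h:L\to M_k$ to the composite $v_k h:L\to\underrightarrow{\lim}M_i$. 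I would also use that in $\mathrm{Ab}$ every element of $\underrightarrow{\lim}(L,M_i)$ is represented by a single $h\in\Hom(L,M_k)$ for some index $k$, which is exactly what produces the index $k$ appearing in condition (2).

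For $(2)\Rightarrow(1)$, assume $f=g\beta+v_k h$. Since $g\beta=\beta^*(g)\in\Ima(\beta^*)=\Ker\xi$, applying $\xi$ gives $\xi(f)=\xi(v_k h)$. But $v_k h=\Psi^L_\frakF([h]_k)$, so by commutativity $\xi(f)=\xi(\Psi^L_\frakF([h]_k))=\Phi^\beta_\frakF\bigl((\underrightarrow{\lim}\xi_i)([h]_k)\bigr)$, which lies in $\Ima(\Phi^\beta_\frakF)$.

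For $(1)\Rightarrow(2)$, suppose $\xi(f)=\Phi^\beta_\frakF(z)$ for some $z\in\underrightarrow{\lim}\Dev_\beta(M_i)$. Since $\underrightarrow{\lim}\xi_i$ is surjective (top row exact), write $z=(\underrightarrow{\lim}\xi_i)([h]_k)$ for some $h:L\to M_k$. Commutativity then yields $\xi(f)=\xi(\Psi^L_\frakF([h]_k))=\xi(v_k h)$, so $\xi(f-v_k h)=0$. Exactness of the bottom row gives $f-v_k h\in\Ima(\beta^*)$, i.e. $f-v_k h=\beta^*(g)=g\beta$ for some $g:P\to\underrightarrow{\lim}M_i$, which rearranges to $f=g\beta+v_k h$.

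The argument is a routine diagram chase, so I expect no serious obstacle; the only point requiring care is the bookkeeping in the colimit, namely that a general element of $\underrightarrow{\lim}(L,M_i)$ may always be taken to come from a single stage $M_k$ and that $\Psi^L_\frakF$ acts on such a representative by post-composition with $v_k$. Keeping the identification $\Psi^L_\frakF([h]_k)=v_k h$ straight is what makes condition (2) fall out in precisely the stated form.
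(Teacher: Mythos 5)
Your proof is correct and is essentially the same diagram chase as the paper's: both directions rest on the commutativity $\xi\Psi^L_\frakF=\Phi^\beta_\frakF(\underrightarrow{\lim}\xi_i)$, exactness of the two rows of \eqref{D10}, and the fact that an element of $\underrightarrow{\lim}\Hom(L,M_i)$ is represented at a single stage $k$ with $\Psi^L_\frakF$ acting by post-composition with $v_k$. No substantive difference from the paper's argument.
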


\begin{proof} The homomorphisms from (2) can be represented in the following diagram
\[\xymatrix{
 & L\ar[r] ^\beta \ar@{-->}[dl]_h \ar[d]^{f} & P\ar@{-->}[dl]^g \\
 M_k \ar[r]_{v_k} &\underrightarrow{\lim}M_i & .
} 
\]

(1)$\Rightarrow$ (2) 
If we look at the commutative diagram \eqref{D10}, 
we observe that $\xi(f)\in \Ima(\Phi^\beta_\frakF)$ if and only if there 
is an element $x\in
\underrightarrow{\lim}\Hom_\CC(L,M_i)$ such that $\xi(f)=\Phi_\frakF^\beta
(\underrightarrow{\lim}\xi_i)(x)=\xi\Psi_\frakF^L(x)$. 
Then $f-\Psi_\frakF^L(x)=\beta^*(g)=g\beta$ for some element
$g\in \Hom_\CC(P,\underrightarrow{\lim}M_i)$.

Since $x\in \underrightarrow{\lim}\Hom_\CC(L,M_i)$, there exist
$k\in I$ and $h\in \Hom_\CC(L,M_k)$ such that
$x=\overline{v}_k(h)\in \Ima\overline{v}_k$, where
$\overline{v}_k:\Hom(L,M_k)\to
\underrightarrow{\lim}\Hom_\CC(L,M_i)$ is the structural
homomorphism associated to the direct limit. Since
$\Psi_\frakF^\beta\overline{v}_k=\Hom_\CC(L,v_k)$, it follows that
$\Psi_\frakF^\beta(x)=\Hom_\CC(L,v_k)(h)=v_k h$.
Thus $f=g\beta+v_k h$.

(2)$\Rightarrow $(1) If $f=g \beta+v_k h$ then $f-v_k h\in \Ima(\beta^*)$, hence 
$$\xi(f)=f+\Ima(\beta^*)=v_i h+\Ima(\beta^*)=
\xi\Psi_\frakF^L(\overline{v}_k(h))=\Phi_\frakF (\underrightarrow{\lim}\xi_i)(\overline{v}_k(h)),$$
and the proof is complete.
\end{proof}

In the following  $\mathcal{FP}$ will be the ideal in $\CC$ of those homomorphisms which factorize through a 
finitely presented object, {i.e. $\mathcal{FP}$ represents the collection of subgroups $\mathcal{FP}(A,B)\leq\Hom_\CC(A,B)$,
$A,B\in\CC$,
of those homomorphisms $A\to B$ which factorize through a finitely presented object.} Then $\CC/\CFP$ will denote the quotient category which has as objects the same objects as $\CC$
and $$\Hom_{\CC/\CFP}(A,B)=\Hom_\CC(A,B)/\CFP(A,B).$$ 

{In the following assertion, if $f:A\to B$ and $h:A\to C$ are homomorphisms, we will denote by $(f,h)^t:A\to B\oplus C$ the 
canonical homomorphism  induced by $f$ and $h$.}

\begin{theorem}\label{comm-limits}
Let $\beta:L\to P$ be a homomorphism. The following are equivalent:
\begin{enumerate}
 \item for every direct system $\frakF$ the map $\Phi^\beta_\frakF$ is an epimorphism;
 \item there exists $g:P\to L$ such that $1_L-g\beta$ factorizes through a finitely presented object;
 \item the induced homomorphism $\overline{\beta}$ in $\CC/\CFP$ is a section;
\item there exists a homomorphism $h:L\to F$ such that $F$ is a finitely presented object 
 and the induced map $(f,h)^t:L\to P\oplus F$ is a splitting monomorphism.
 \end{enumerate}
\end{theorem}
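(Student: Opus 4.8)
The plan is to isolate the two formal equivalences $(2)\Leftrightarrow(3)$ and $(2)\Leftrightarrow(4)$, and to extract the genuine content from Lemma~\ref{epi-dev} in the two implications relating $(1)$ and $(2)$. The equivalence $(2)\Leftrightarrow(3)$ merely unwinds the definition of $\CC/\CFP$: the morphism $\overline\beta$ is a section there exactly when some class $\overline g$ of a homomorphism $g:P\to L$ satisfies $\overline g\,\overline\beta=1_L$ in $\CC/\CFP$, and this identity says precisely that $1_L-g\beta\in\CFP(L,L)$, i.e. factors through a finitely presented object. For $(2)\Leftrightarrow(4)$ I would argue directly: if $(2)$ holds, write $1_L-g\beta=sh$ with $h:L\to F$, $s:F\to L$ and $F$ finitely presented; then the homomorphism $P\oplus F\to L$ induced by $g$ and $s$ is a retraction of $(\beta,h)^t$, since composing gives $g\beta+sh=1_L$, which is $(4)$. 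Conversely, any retraction of $(\beta,h)^t$ supplies $g:P\to L$ and $s:F\to L$ with $g\beta+sh=1_L$, so $1_L-g\beta=sh$ factors through the finitely presented $F$, giving $(2)$.

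For $(1)\Rightarrow(2)$ I would exploit that $\CC$ is locally finitely presented, so $L$ itself is a direct limit $L=\underrightarrow{\lim}F_i$ of finitely presented objects along a direct system $\frakF=(F_i,v_{ij})$ with structural maps $v_i:F_i\to L$. Applying the hypothesis to this particular $\frakF$, the map $\Phi^\beta_\frakF$ is an epimorphism, so in particular $\xi(1_L)\in\Ima(\Phi^\beta_\frakF)$ for the identity $1_L:L\to\underrightarrow{\lim}F_i$. By Lemma~\ref{epi-dev} there are then an index $k$, a homomorphism $h:L\to F_k$ and a homomorphism $g:P\to L$ with $1_L=g\beta+v_kh$. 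Since $v_kh$ factors through the finitely presented object $F_k$, this reads $1_L-g\beta\in\CFP(L,L)$, which is exactly $(2)$.

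For $(2)\Rightarrow(1)$ I would run Lemma~\ref{epi-dev} in the opposite direction, now for an arbitrary direct system $\frakF=(M_i,v_{ij})$. Because the map $\xi$ in diagram \eqref{D10} is surjective, it suffices to show $\xi(f)\in\Ima(\Phi^\beta_\frakF)$ for every $f:L\to\underrightarrow{\lim}M_i$. Writing $1_L-g\beta=sh$ as in $(2)$, I compute $f=fg\beta+fsh$. The key point is that $fs:F\to\underrightarrow{\lim}M_i$ factors through some $M_k$: since $F$ is finitely presented, $\Psi^F_\frakF$ is an isomorphism, so there exist $k$ and $t:F\to M_k$ with $fs=v_kt$. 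Setting $g'=fg:P\to\underrightarrow{\lim}M_i$ and $h'=th:L\to M_k$ yields $f=g'\beta+v_kh'$, whence Lemma~\ref{epi-dev} gives $\xi(f)\in\Ima(\Phi^\beta_\frakF)$, as required.

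The steps $(2)\Leftrightarrow(3)$ and $(2)\Leftrightarrow(4)$ are routine bookkeeping; the substance lies in the two applications of Lemma~\ref{epi-dev}. I expect the main obstacle to be the correct choice of test data in $(1)\Rightarrow(2)$: one must apply the hypothesis to the canonical presentation $L=\underrightarrow{\lim}F_i$ by finitely presented objects (available precisely because $\CC$ is locally finitely presented) and evaluate against $\xi(1_L)$, since it is the identity of $L$ that forces the decomposition witnessing $(2)$. In $(2)\Rightarrow(1)$ the only delicate point is that factoring $fs$ through some $M_k$ needs the full strength of finite presentation of $F$, so that $\Psi^F_\frakF$ is surjective for every direct system and not merely for direct unions.
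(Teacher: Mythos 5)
Your proposal is correct and follows essentially the same route as the paper: both implications between (1) and (2) are obtained by applying Lemma~\ref{epi-dev} to the presentation $L=\underrightarrow{\lim}F_i$ with $f=1_L$ (for $(1)\Rightarrow(2)$) and by factoring $fs$ through some $M_k$ using finite presentation of $F$ (for $(2)\Rightarrow(1)$), while $(2)\Leftrightarrow(3)$ and $(2)\Leftrightarrow(4)$ are the same formal unwindings the paper gives. No gaps.
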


\begin{proof}
 (1)$\Rightarrow$(2) We can write $L$ as a direct limit of finitely presented objects, $L=\underrightarrow{\lim}L_i$. 
 Then an application of Lemma \ref{epi-dev} for $f=1_L$ gives us the conclusion.
 
  (2)$\Rightarrow$(1) Since $1_L-g\beta$ factorizes through a finitely presented object, there exists a finitely presented
  object $F$ and two homomorphisms $h_1:L\to F$, $h_2:F\to L$ such that $1-g\beta=h_2h_1.$ 
  
  Since $F$ is finitely presented, for every direct limit $\underrightarrow{\lim}M_i$ and every homomorphism 
  $f:L\to \underrightarrow{\lim}M_i$ we can find an index $i$ and a homomorphism $f_i:F\to M_i$ such that $fh_2=v_i f_i$. 
  It follows that $f(1-g\beta)=fh_2h_1=v_i f_i h_1$. Then there exists $g'=fg:P\to \underrightarrow{\lim}M_i$ and 
  $h=f_i h_1:L\to M_i$ such that $f=g'\beta+v_ih$, and we apply Lemma \ref{epi-dev} to complete the proof.  
 
 (2)$\Leftrightarrow$(3) This is obvious. 
 
 
 (2)$\Rightarrow$(4) Let $g$ be as in (2) and $h=1_L-g\beta$. There exists a finitely presented object $F$ and two maps 
 $h_1:L\to F$, $h_2:F\to L$ such that 
 $h=h_2 h_1$. Then the map $(\beta,h_1)^t:L\to P\oplus F$ induced by  $\beta$ and $h_1$ is a splitting monomorphism, and 
 a left inverse is the homomorphism $(g,h_2):P\oplus F\to L$ induced by $g$ and $h_2$.
 
 (4)$\Rightarrow$(2) Let $g':P\oplus F\to L$ be a left inverse for $(\beta,h)^t$. Then $1_L=g_{|P}\beta+g_{|F}h$, hence
 $1_L-g_{|P}\beta$ factorizes through a finitely presented object.
\end{proof}





\section{Commuting with direct unions}\label{Sect-direct-unions}

Recall from \cite[Proposition 1.62]{AR} that in our hypotheses the
structural maps $v_i:M_i\to \underrightarrow{\lim}M_i$ of a direct union are monomorphisms.

Since the class of finitely generated objects is closed with respect to epimorphic images, 
we will prove that Theorem \ref{comm-limits} can be improved to characterize the commuting of 
$\Dev_\beta$ with respect to direct unions. 

\begin{theorem}\label{unions-1}
Let $\beta:L\to P$ be a homomorphism. The following are equivalent:
\begin{enumerate}
 \item for every direct system $\frakF$ of monomorphisms the induced homomorphism $\Phi_\frakF^\beta$ is an epimorphism;
 
 \item there exists $g:P\to L$ such that $1_L-g\beta$ factorizes through a finitely generated object;

 \item if $\CFG$ is the ideal of all homomorphisms which factorize through a finitely generated object then 
 the induced homomorphism $\overline{\beta}$ in $\CC/\CFG$ is a retract;
 \item there exists a homomorphism $h:L\to M$ such that $h$ factorizes through a finitely generated object 
 and the induced map $(\beta,h)^t:L\to P\oplus M$ is a splitting monomorphism.
\end{enumerate}

 
 \begin{enumerate}
 \item[(5)] there exists a homomorphism $h:L\to F$ such that $F$ is a finitely generated object 
 and the induced map $(f,h)^t:L\to P\oplus F$ is a splitting monomorphism,
 \item[(6)] there exists a finitely generated subobject $H\leq L$ such that the induced homomorphism 
$\overline{\beta}:L/H\to P/\beta(H)$ is a split mono and there exists a left inverse for $\overline{\beta}$ which can be lifted 
to a homomorphism $P\to L$. 
 \end{enumerate}
\end{theorem}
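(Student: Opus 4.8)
The plan is to prove all six conditions equivalent by routing everything through (2), following the proof of Theorem~\ref{comm-limits} with \emph{finitely presented} systematically weakened to \emph{finitely generated} and the direct systems restricted to monomorphisms. For the implication (1)$\Rightarrow$(2) I would write $L$ as the directed union of its finitely generated subobjects, $L=\underrightarrow{\lim}L_i$, with the structural maps $v_i\colon L_i\to L$ equal to the inclusions; this is a direct system of monomorphisms, so by (1) the homomorphism $\Phi^\beta_\frakF$ is an epimorphism, and in particular $\xi(1_L)\in\Ima(\Phi^\beta_\frakF)$. Applying Lemma~\ref{epi-dev} to $f=1_L$ then produces $k\in I$, a map $h\colon L\to L_k$ and $g\colon P\to L$ with $1_L=g\beta+v_kh$, so that $1_L-g\beta=v_kh$ factorizes through the finitely generated object $L_k$.

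The converse (2)$\Rightarrow$(1) is the conceptual core, and it is where the monomorphism hypothesis is decisive. Writing $1_L-g\beta=h_2h_1$ with $h_1\colon L\to F$, $h_2\colon F\to L$ and $F$ finitely generated, I would take an arbitrary monomorphic system $\frakF$ and any $f\colon L\to\underrightarrow{\lim}M_i$. Because $F$ is finitely generated \emph{and} all $v_{ij}$ are monomorphisms, $\Psi^F_\frakF$ is surjective, so $fh_2$ factors as $v_if_i$ through some stage; then $f=(fg)\beta+v_i(f_ih_1)$ and Lemma~\ref{epi-dev} yields $\xi(f)\in\Ima(\Phi^\beta_\frakF)$. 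As $\xi$ is onto, $\Phi^\beta_\frakF$ is an epimorphism. It is precisely the restriction to monomorphisms that allows the finitely presented object of Theorem~\ref{comm-limits} to be replaced here by a merely finitely generated one.

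The remaining equivalences among (2)--(5) are formal. The step (2)$\Leftrightarrow$(3) is the translation of ``$1_L-g\beta\in\CFG$'' into ``$\overline g$ is a left inverse of $\overline\beta$ in $\CC/\CFG$''. For (2)$\Rightarrow$(4) and (2)$\Rightarrow$(5), with $1_L-g\beta=h_2h_1$ the map $(\beta,h_1)^t\colon L\to P\oplus F$ is split by $(g,h_2)$, the middle factor $F$ being finitely generated; conversely the two components of a splitting of $(\beta,h)^t$ give $1_L=g'|_P\,\beta+g'|_M\,h$ with $g'|_M\,h$ factoring through a finitely generated object, which proves (4)$\Rightarrow$(2), while (5)$\Rightarrow$(4) is the trivial inclusion of a special case.

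The genuinely new statement, and the step I expect to be the main obstacle, is (2)$\Leftrightarrow$(6). For (2)$\Rightarrow$(6) I would set $H=\Ima(h_2)$, which is finitely generated as an epimorphic image of $F$; one checks $\Ima(1_L-g\beta)\subseteq H$, whence $\pi_Hg\beta=\pi_H$, and $g(\beta(H))\subseteq H$, so that $g$ descends to $\overline g\colon P/\beta(H)\to L/H$, and the first identity reads $\overline g\,\overline\beta=1_{L/H}$, exhibiting $\overline\beta$ as a split mono whose left inverse lifts to $g$. For (6)$\Rightarrow$(2), combining $\pi'\beta=\overline\beta\pi_H$, the relation $\overline r\,\overline\beta=1_{L/H}$, and the lift $\pi_Hg=\overline r\pi'$ gives $\pi_Hg\beta=\pi_H$, so $\Ima(1_L-g\beta)\subseteq\Ker\pi_H=H$ and $1_L-g\beta$ factorizes through the finitely generated subobject $H$. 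The delicate points are ensuring that $H$ is actually finitely generated, that the induced maps on the quotients $L/H$ and $P/\beta(H)$ are well-defined, and that the lifting hypothesis in (6) is exactly what manufactures the homomorphism $g\colon P\to L$ demanded by (2); these are what push the argument beyond a routine transcription of Theorem~\ref{comm-limits}.
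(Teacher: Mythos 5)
Your proposal is correct and follows essentially the same route as the paper: the equivalences (1)--(5) are obtained by rerunning the proof of Theorem~\ref{comm-limits} with ``finitely presented'' weakened to ``finitely generated'' (using that $L$ is the direct union of its finitely generated subobjects and that $\Hom(F,-)$ commutes with direct unions for $F$ finitely generated), and (2)$\Leftrightarrow$(6) is proved by the same quotient construction, taking $H$ a finitely generated subobject containing $\Ima(1_L-g\beta)$, checking $g(\beta(H))\subseteq H$, and reading off $\overline g\,\overline\beta=1_{L/H}$, with the lifting hypothesis in (6) supplying the map $g$ for the converse. No gaps; if anything you spell out more detail than the paper does.
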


\begin{proof}
It is enough to prove the equivalence (2)$\Leftrightarrow$(6) since for the other equivalences we can repeat the 
arguments of the proof of Theorem \ref{comm-limits}, using the fact that $L$ can be written as a direct union of 
its finitely generated subobjects.

(2)$\Rightarrow$(6) 
By (2) we know that there exists a homomorphism $g:P\to L$ such that 
$1_L-g\beta$ factorizes through a finitely generated object. Therefore there exists a subobject $H\leq L$ such that 
$\Ima(1_L-g\beta)\subseteq H$. If $h:H\to L$ is the embedding of $H$ in $L$ then there exists a homomorphism $\gamma:L\to H$ such that 
$1_L-g\beta= h\gamma $.

Since $h=g\beta h+h \gamma h$, we have $\Ima(g\beta h)\leq
\Ima(h)$, hence $g\beta(H)\leq H$. Therefore there are
canonical homomorphisms $\overline{\beta}:L/H\to P/\beta(H)$
and $\overline{g}:P/\beta(H)\to K/H$ which are induced by $\beta$, respectively $g$, and the diagram
\begin{equation*}
\begin{CD}
L@>{\beta}>> P@>{g}>> L \\ @V{\pi_H}VV @V{\pi_{\beta(H)}}VV @V{\pi_H}VV \\
L/H@>{\overline{\beta}}>> P/\beta(H)@>{\overline{g}}>> L/H
\end{CD}
\end{equation*}
 is commutative, where the vertical arrows are the
canonical epimorphisms.

Moreover, $\pi_H h=0$, hence
$\overline{g}\overline{\beta}\pi_H=\pi_H g\beta=\pi_H(1_L-h
\gamma)=\pi_H$. Since $\pi_H$ is an epimorphism we have
$\overline{y}\overline{\beta}=1_{K/K_k}$, hence
$\overline{\beta}$ is a splitting monomorphism. 

(6)$\Rightarrow$(2) Let $g:P\to L$ be a homomorphism such that $g(\beta(H))\subseteq H$ and the induced homomorphism
$\overline{g}:P/\beta(H)\to L/H$ satisfies the equality $\overline{g}\overline{\beta}=1_{L/H}$. Then 
$\Ima(1_L-g\beta)\subseteq H$, and the proof is complete.
\end{proof}

If $P$ is projective the lifting condition stated in (6) is always satisfied. This is not the case if $P$ is not projective. 

\begin{example}\label{Ex-cond(6)}
Let $\CC$ be the category of all abelian groups, and let $\Z_p$ be the subgroup of $\Q$ defined in Example \ref{ex-fg-non}.

If $\beta:\Z_p\to \Q$ is the inclusion map then the induced homomorphism $\overline{\beta}:\Z_p/\Z\to \Q/\Z$ is split mono. 
But $\Hom(\Q,\Z_p)=0$, so the left inverse of $\overline{\beta}$ (in this case this left inverse is unique) 
cannot be lifted to a homomorphism $\Q\to \Z_p$.   
\end{example}


{
We obtain the following interesting characterization of pure-projective objects. Let us recall that
an exact sequence $0\to A\to B\to C\to 0$ is pure (and $B\to C$ is a pure epimorphism) 
if all finitely presented objects are projective with respect to it, and an 
object is projective if and only if it is projective with respect to all pure exact sequences. It is not hard to
see that an object is pure-projective iff it is a direct summand of a direct sum of finitely presented objects. 
As in the standard homological
algebra we can define the functor $\mathrm{Pext}^1(M,-)$ as $\Dev_\beta$, 
where $\beta:P\to M$ is a pure epimorphism such that 
$P$ is pure-projective. Remark that $M$ is pure-projective iff $\mathrm{Pext}^1(M,-)=0$. For more details we refer to 
\cite[Appendix A]{Jen-Len}.

\begin{proposition}\label{pure-proj}
The following are equivalent for an object $M\in\CC$: 
\begin{enumerate} 
\item  The functor $\mathrm{Pext}^1(M,-)$ commutes with respect to direct limits;
\item
$\mathrm{Pext}^1(M,-)$ commutes with respect to direct unions;
\item
$M$ is pure projective.
\end{enumerate}
 \end{proposition}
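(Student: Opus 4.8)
The plan is to prove the cyclic chain of implications $(3)\Rightarrow(1)\Rightarrow(2)\Rightarrow(3)$, so that the whole weight falls on the last step. The first two implications are formal: if $M$ is pure-projective then $\mathrm{Pext}^1(M,-)=0$ by the remark preceding the statement, and the zero functor commutes with every direct system; and since every direct union is in particular a direct limit, commuting with all direct limits entails commuting with those among them that are direct unions. Throughout I fix a pure-projective presentation $0\to L\xrightarrow{\beta} P\to M\to 0$ in which $P=\bigoplus_{\lambda\in\Lambda}F_\lambda$ is an honest direct sum of finitely presented objects, so that, exactly as in the pattern of Example~\ref{dev}(1), $\mathrm{Pext}^1(M,-)$ is naturally isomorphic to $\Dev_\beta$ and, moreover, $M$ is pure-projective if and only if $\beta$ is a split monomorphism.

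For $(2)\Rightarrow(3)$ I would first feed the hypothesis into Theorem~\ref{unions-1}. Commuting with direct unions means that $\Phi^\beta_\frakF$ is an isomorphism, in particular an epimorphism, for every direct system of monomorphisms $\frakF$; hence condition~(1) of that theorem holds, and I may invoke its equivalent form~(6): there is a finitely generated subobject $H\leq L$ for which the induced map $\overline{\beta}:L/H\to P/\beta(H)$ is a split monomorphism. Consequently the short exact sequence $0\to L/H\to P/\beta(H)\to M\to 0$ splits, which exhibits $M$ as a direct summand of $P/\beta(H)$.

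It then remains to show that $P/\beta(H)$ is pure-projective, for a direct summand of a pure-projective object is pure-projective and we are done. Here I use that $\beta(H)$ is a finitely generated subobject of $P=\underrightarrow{\lim}_{E}P_E$, the direct union of its finite partial sums $P_E=\bigoplus_{\lambda\in E}F_\lambda$ along the split inclusions; since the $\Hom$-functor of a finitely generated object commutes with such a direct union, the inclusion $\beta(H)\hookrightarrow P$ factors through some $P_{E_0}$, so that we may assume $\beta(H)\leq P_{E_0}$. Writing $P=P_{E_0}\oplus\bigoplus_{\lambda\notin E_0}F_\lambda$ gives $P/\beta(H)\cong (P_{E_0}/\beta(H))\oplus\bigoplus_{\lambda\notin E_0}F_\lambda$. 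The second summand is a direct sum of finitely presented objects; the first is a finitely presented object modulo a finitely generated subobject, hence again finitely presented by the characterization of finitely presented objects recalled before Proposition~\ref{basic-def}. Thus $P/\beta(H)$ is a direct sum of finitely presented objects, i.e.\ pure-projective, and $M$ inherits pure-projectivity.

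The crux, and the only place requiring a genuine idea, is the passage from the conclusion of Theorem~\ref{unions-1} to honest pure-projectivity: that theorem yields a splitting only after collapsing a finitely generated subobject $H$, so a priori one controls $\beta$ merely up to $H$. The point that makes everything work is that dividing the direct-sum-of-finitely-presented object $P$ by the finitely generated $\beta(H)$ does not destroy pure-projectivity, because $\beta(H)$ is absorbed by a single finite partial sum $P_{E_0}$ and a finitely presented object modulo a finitely generated subobject stays finitely presented. The two supporting facts I lean on, namely that a finitely generated subobject of $\bigoplus_\lambda F_\lambda$ lies in a finite sub-coproduct and that finitely presented modulo finitely generated is finitely presented, both follow from the $\Hom$-commutation characterizations of finite generation and finite presentation together with the Ker--Coker lemma already exploited in the excerpt.
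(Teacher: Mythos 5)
Your proof is correct and follows essentially the same route as the paper: reduce to $(2)\Rightarrow(3)$, identify $\mathrm{Pext}^1(M,-)$ with $\Dev_\beta$ for a pure-projective presentation with $P=\bigoplus_\lambda F_\lambda$, apply Theorem~\ref{unions-1}(6) to split off $M$ from $P/\beta(H)$, and observe that $\beta(H)$ is absorbed by a finite partial sum so that $P/\beta(H)$ remains a direct sum of finitely presented objects. Your write-up is in fact slightly more explicit than the paper's at the step where the finitely generated $\beta(H)$ is placed inside a finite sub-coproduct, but the argument is the same.
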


\begin{proof}
It is enough to prove that (2)$\Rightarrow$(3).  

Let $M$ be an object such that $\mathrm{Pext}^1(M,-)$ commutes with respect to direct unions. Since $M$ is a direct 
limit of finitely presented objects, there exists a pure exact sequence 
$$0\to L\overset{\beta} \to \oplus_{i\in I}P_i\to M\to 0$$ such that
all $P_i$ are finitely presented objects. Hence $\mathrm{Pext}^1(M,-)=\Dev_\beta$, and we apply 
Theorem \ref{unions-1}. Therefore there exists a finitely generated subobject $K\leq L$ such that the induced
homomorphism $\overline{\beta}:L/K\to \oplus_{i\in I}P_i/\beta(K)$ is a splitting monomorphism. But 
$\Coker(\overline{\beta})\cong M$, hence $M$ is isomorphic to a direct summand of $\oplus_{i\in I}P_i/\beta(K)$. Since 
$\beta(K)$ is finitely generated we can view $\beta(K)$ as a subobject of a subsum $\oplus_{i\in J}P_i/\beta(K)$, 
were $J$ is a finite subset of $I$. Since $\oplus_{i\in J}P_i$ is finitely presented, it follows that 
$\oplus_{i\in J}P_i/\beta(K)$ is also finitely presented, hence $M$ is a direct summand of a direct sum of finitely presented
objects. Then $M$ is pure-projective.
\end{proof}
}

The next observation allows us to prove that, in order to study the commuting properties with 
respect to direct unions, it is enough to restrict to defect functors associated to the homomorphisms which appear 
in the canonical decomposition of $\beta$.

\begin{proposition}\label{monomorphism-unions}
Let $\beta:L\to P$ be an epimorphism. Then for every direct system $\frakF=(M_{i}, v_{ij})_{i,j\in I}$ of monomorphisms, 
 the canonical map $\Phi_\frakF^\beta$ is a monomorphism. 
\end{proposition}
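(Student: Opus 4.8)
The plan is to exploit the fact that an epimorphism $\beta$ turns $\Dev_\beta$ into a \emph{subfunctor} of a representable functor, and then to reduce the claim to the (elementary) monicity of the comparison maps $\Psi$ for direct unions. I would not attempt a direct diagram chase on \eqref{D10}: with $\beta$ epi one has $\Coker(\beta)=0$, so the snake lemma only identifies $\Ker\Phi^\beta_\frakF$ with the kernel of the induced map $\Coker(\Psi^P_\frakF)\to\Coker(\Psi^L_\frakF)$, which need not vanish without extra input.

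First I would record the structure of $\Dev_\beta$ when $\beta$ is an epimorphism. Let $\upsilon\colon K\to L$ be the kernel of $\beta$, so that $0\to K\xrightarrow{\upsilon}L\xrightarrow{\beta}P\to 0$ is exact (cf. Example \ref{dev}(3)). Since $\beta\upsilon=0$ we have $\Hom(\upsilon,-)\circ\Hom(\beta,-)=\Hom(\beta\upsilon,-)=0$, so the natural transformation $\Hom(\upsilon,-)\colon\Hom(L,-)\to\Hom(K,-)$ annihilates $\Ima(\Hom(\beta,-))$ and therefore factors through $\Dev_\beta=\Coker(\Hom(\beta,-))$, yielding a natural transformation $\theta\colon\Dev_\beta\to\Hom(K,-)$. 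Applying $\Hom(-,X)$ to the short exact sequence gives $\Ker(\Hom(\upsilon,X))=\Ima(\Hom(\beta,X))$ for every $X$, so each $\theta_X$ is injective; that is, $\theta$ is a natural monomorphism and $\Dev_\beta\cong\Ima\Hom(\upsilon,-)\subseteq\Hom(K,-)$.

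Next I would invoke naturality of the direct-limit comparison maps. The transformation $\theta$ produces a commutative square relating $\Phi^\beta_\frakF$ to $\Psi^K_\frakF$, namely
\begin{equation*}
\theta_{\underrightarrow{\lim}M_i}\circ\Phi^\beta_\frakF=\Psi^K_\frakF\circ\underrightarrow{\lim}\theta_{M_i}.
\end{equation*}
Since direct limits are exact in $Ab$ and each $\theta_{M_i}$ is injective, $\underrightarrow{\lim}\theta_{M_i}$ is a monomorphism. Hence it suffices to show that $\Psi^K_\frakF$ is a monomorphism: the right-hand composite is then monic, which forces $\Phi^\beta_\frakF$ to be monic as well.

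Finally I would prove that for a direct system of monomorphisms the comparison map $\Psi^X_\frakF$ is injective for \emph{every} object $X$. By \cite[Proposition 1.62]{AR} the structural maps $v_i\colon M_i\to\underrightarrow{\lim}M_i$ are monomorphisms. If $\zeta\in\underrightarrow{\lim}\Hom(X,M_i)$ lies in $\Ker\Psi^X_\frakF$, write $\zeta$ as the image of some $f\colon X\to M_k$; then $\Psi^X_\frakF(\zeta)=v_kf=0$, and monicity of $v_k$ yields $f=0$, whence $\zeta=0$. Taking $X=K$ completes the argument. The only genuinely delicate point is the first step, the identification of $\Dev_\beta$ with a subfunctor of $\Hom(K,-)$; once this natural embedding is in place, the monicity of $\Phi^\beta_\frakF$ follows formally from the exactness of direct limits together with the easy injectivity of $\Psi$ for unions.
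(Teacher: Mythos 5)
Your proof is correct, and in substance it is the same argument as the paper's: the paper's element chase (lift $x\in\Ker\Phi^\beta_\frakF$ to some $\alpha_i:L\to M_i$, precompose with the kernel inclusion $\iota_K$, and use that the $v_i$ are monic together with $\Ima(\beta^*)=\Ker(\iota_K^*)$) is exactly what one obtains by unwinding your natural embedding $\Dev_\beta\hookrightarrow\Hom(K,-)$ followed by the monicity of $\Psi^K_\frakF$ and of $\underrightarrow{\lim}\theta_{M_i}$. The functorial packaging is a clean way to present it, but it rests on the same two ingredients and introduces no new idea.
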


\begin{proof}
Let $x\in \Ker(\Phi_\frakF^\beta)$. Then there exists $y\in \underrightarrow{\lim}\Hom(L,M_i)$ such that 
$x=\underrightarrow{\lim}\xi_i(y)$ and $\Psi^L_\frakF(y)$ factorizes through $\beta$. 
There exists $i\in I$ and $\alpha_i:L\to M_i$ such that $y=\overline{v}_i(\alpha_i)$, where $\overline{v}_i$ denotes the 
canonical map $\overline{v}_i:\Hom(L,M_i)\to \underrightarrow{\lim}\Hom(L,M_i)$. 
Then $\Psi_\frakF^\beta\overline{v}_i(\alpha_i)=v_i\alpha_i$ factorizes through $\beta$. Let 
$\gamma:P\to \underrightarrow{\lim}M_i$ be a homomorphism such that $v_i\alpha_i=\gamma\beta$. 

Let $K=\Ker(\beta)$ and $\iota_K:K\to L$ be the canonical homomorphism. Then $v_i\alpha_i\iota_K=\gamma\beta\iota_K=0$. 
Since the structural homomorphisms $v_i$ are monomorphisms we obtain $\alpha_i\iota_K=0$, hence $\alpha_i$ factorizes 
through $\beta$. Then $x=0$, and the proof is complete.  
\end{proof}

\begin{corollary}
Let $\beta:L\to P$ be an epimorphism. The following are equivalent:
\begin{enumerate}
 \item the functor $\Dev_\beta$ commutes with respect to direct unions;
 
 \item for every direct family $\frakF$ of monomorphisms the induced homomorphism $\Phi_\frakF^\beta$ is an epimorphism;
\end{enumerate} 
\end{corollary}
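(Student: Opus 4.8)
The plan is to recognize that this corollary is an immediate upgrade of Proposition~\ref{monomorphism-unions}, which already supplies injectivity of $\Phi^\beta_\frakF$ for free whenever $\beta$ is an epimorphism. Recall that, by the definition given at the start of Section~\ref{Sect-direct-limits}, saying that $\Dev_\beta$ commutes with respect to direct unions means precisely that $\Phi^\beta_\frakF$ is an \emph{isomorphism} for every direct system $\frakF$ whose structural maps $v_i$ are monomorphisms. So the two conditions differ only in that (1) demands isomorphisms while (2) demands epimorphisms, and the gap between them is exactly a monomorphism statement.

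The implication (1)$\Rightarrow$(2) is immediate and requires no work: an isomorphism is in particular an epimorphism, so if $\Phi^\beta_\frakF$ is an isomorphism for all direct families of monomorphisms, it is certainly an epimorphism for all of them.

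For the converse (2)$\Rightarrow$(1), I would fix an arbitrary direct family $\frakF=(M_i,v_{ij})_{i,j\in I}$ of monomorphisms and argue that $\Phi^\beta_\frakF$ is an isomorphism. Since $\beta$ is assumed to be an epimorphism, Proposition~\ref{monomorphism-unions} applies directly and tells us that $\Phi^\beta_\frakF$ is a monomorphism. By hypothesis (2), the same map is also an epimorphism. A homomorphism of abelian groups that is simultaneously a monomorphism and an epimorphism is an isomorphism, so $\Phi^\beta_\frakF$ is an isomorphism. As $\frakF$ was an arbitrary direct family of monomorphisms, this says exactly that $\Dev_\beta$ commutes with respect to direct unions, establishing (1).

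There is no genuine obstacle here: the entire content has been isolated in Proposition~\ref{monomorphism-unions}, and the corollary is simply the observation that, for epimorphic $\beta$, surjectivity of the comparison maps is the only missing ingredient needed to conclude that $\Dev_\beta$ commutes with direct unions. The only point to keep straight is the bookkeeping of what ``commutes with respect to direct unions'' means (isomorphism for all directed systems of monomorphisms), so that the reduction to combining ``mono'' with ``epi'' is transparent.
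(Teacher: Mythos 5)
Your proof is correct and is exactly the intended argument: the paper states this corollary without proof precisely because it follows immediately by combining Proposition~\ref{monomorphism-unions} (which gives injectivity of $\Phi^\beta_\frakF$ for epimorphic $\beta$) with the hypothesis of surjectivity, and noting that a bijective homomorphism of abelian groups is an isomorphism. Nothing is missing.
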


Using Theorem \ref{theorem-dev-restriction} and Proposition \ref{monomorphism-unions} we have the following result:

\begin{theorem}\label{cor-theorem-dev-restriction}
Suppose that $\beta:L\to P$ is a homomorphism in the abelian category $\CC$, $i_K:K\to L$ is the kernel of $\beta$, $\pi_K:L\to L/K$ is the canonical
epimorphism, and $\overline{\beta}:L/K\to P$ is 
the homomorphism induced by $\beta$. Then 
$\Phi_\frakF^\beta$ is an isomorphism (epimorphism) for a direct family of monomorphisms $\frakF$ if and only if 
$\Phi_\frakF^{\overline{\beta}}$ and $\Phi_\frakF^{\pi_K}$ 
 are isomorphisms (epimorphisms).
\end{theorem}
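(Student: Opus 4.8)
The plan is to transfer the canonical short exact sequence of defect functors produced by Theorem~\ref{theorem-dev-restriction} through the direct limit process, and then to read off the behaviour of the three comparison maps from a single application of the Ker-Coker Lemma. Fix a direct family of monomorphisms $\frakF=(M_i,v_{ij})$. By Theorem~\ref{theorem-dev-restriction} there is a short exact sequence of functors $0\to \Dev_{\overline{\beta}}\to \Dev_\beta\to \Dev_{\pi_K}\to 0$; evaluating it at each $M_i$ gives a direct system of short exact sequences of abelian groups, while evaluating it at $\underrightarrow{\lim}M_i$ gives one more short exact sequence. Since $\CC$ is a Grothendieck category, directed colimits in $Ab$ are exact, so applying $\underrightarrow{\lim}$ to the former yields an exact sequence as well. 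First I would assemble these into the commutative ladder
\[\xymatrix{
0\ar[r] & \underrightarrow{\lim}\Dev_{\overline{\beta}}(M_i)\ar[r]\ar[d]^{\Phi_\frakF^{\overline{\beta}}} & \underrightarrow{\lim}\Dev_{\beta}(M_i)\ar[r]\ar[d]^{\Phi_\frakF^{\beta}} & \underrightarrow{\lim}\Dev_{\pi_K}(M_i)\ar[r]\ar[d]^{\Phi_\frakF^{\pi_K}} & 0\\
0\ar[r] & \Dev_{\overline{\beta}}(\underrightarrow{\lim}M_i)\ar[r] & \Dev_{\beta}(\underrightarrow{\lim}M_i)\ar[r] & \Dev_{\pi_K}(\underrightarrow{\lim}M_i)\ar[r] & 0
}\]
with exact rows, the vertical maps being the three comparison homomorphisms. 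The only point needing a routine check here is commutativity of the two squares, which is naturality of the transformations of Theorem~\ref{theorem-dev-restriction} against the defining cocones of the $\Phi$'s.

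Next I would run the Ker-Coker Lemma on this ladder to obtain the six-term exact sequence
\[0\to \Ker\Phi_\frakF^{\overline{\beta}}\to \Ker\Phi_\frakF^{\beta}\to \Ker\Phi_\frakF^{\pi_K}\to \Coker\Phi_\frakF^{\overline{\beta}}\to \Coker\Phi_\frakF^{\beta}\to \Coker\Phi_\frakF^{\pi_K}\to 0.\]
The decisive input is Proposition~\ref{monomorphism-unions}: because $\pi_K$ is an epimorphism and $\frakF$ consists of monomorphisms, $\Phi_\frakF^{\pi_K}$ is a monomorphism, so $\Ker\Phi_\frakF^{\pi_K}=0$. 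The six-term sequence then breaks into an isomorphism $\Ker\Phi_\frakF^{\overline{\beta}}\cong \Ker\Phi_\frakF^{\beta}$ together with a short exact sequence $0\to \Coker\Phi_\frakF^{\overline{\beta}}\to \Coker\Phi_\frakF^{\beta}\to \Coker\Phi_\frakF^{\pi_K}\to 0$.

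From these two facts the equivalences fall out. The cokernel sequence shows that its middle term vanishes exactly when both outer terms vanish, i.e. $\Phi_\frakF^{\beta}$ is an epimorphism iff both $\Phi_\frakF^{\overline{\beta}}$ and $\Phi_\frakF^{\pi_K}$ are; this is the epimorphism assertion. For the isomorphism assertion I combine this with $\Ker\Phi_\frakF^{\overline{\beta}}\cong \Ker\Phi_\frakF^{\beta}$: if $\Phi_\frakF^\beta$ is an isomorphism then $\Phi_\frakF^{\overline{\beta}}$ is mono and both $\Phi_\frakF^{\overline{\beta}},\Phi_\frakF^{\pi_K}$ are epi; since $\Phi_\frakF^{\pi_K}$ is already mono it is thus an isomorphism, and $\Phi_\frakF^{\overline{\beta}}$, being mono and epi, is too. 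The converse directions are immediate by reading the same two displayed facts the other way (or, equivalently, by the Five Lemma applied to the ladder).

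I expect the genuine content to be concentrated in securing $\Ker\Phi_\frakF^{\pi_K}=0$, which is exactly what Proposition~\ref{monomorphism-unions} supplies and what breaks the symmetry of the snake sequence; without it one could still deduce the epimorphism statement from the cokernel sequence, but not pin down the individual isomorphisms. The remaining ingredients—exactness of the top row via the AB5 property of $\CC$ and commutativity of the ladder—are routine but should be recorded.
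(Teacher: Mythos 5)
Your proposal is correct and follows essentially the same route as the paper: the paper also forms the very same ladder of exact rows from the short exact sequence of Theorem~\ref{theorem-dev-restriction} together with exactness of direct limits, invokes Proposition~\ref{monomorphism-unions} to see that $\Phi_\frakF^{\pi_K}$ is monic, and then declares the equivalence obvious. You merely make explicit the snake-lemma bookkeeping that the paper leaves to the reader.
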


\begin{proof}
In order to prove the equivalence, let us remark, using the fact that direct limits are exact in $\CC$, that
for every direct family $\frakF$ we have a commutative diagram
\begin{equation*} \xymatrix{          
0\ar[r] & \underrightarrow{\lim}\Dev_{\overline{\beta}}(M_i) \ar[d]^{\Phi_{\overline{\beta}}^\frakF} \ar[r] & 
\underrightarrow{\lim}\Dev_{\beta}(M_i) \ar[d]^{\Phi_{\beta}^\frakF} \ar[r] & 
\underrightarrow{\lim}\Dev_{\pi_K}(M_i) \ar[d]^{\Phi_{\pi_K}^\frakF} \ar[r]&
 0 \\ 
0\ar[r] & \Dev_{\overline{\beta}}(\underrightarrow{\lim}M_i)  \ar[r] & 
\Dev_{\beta}(\underrightarrow{\lim}M_i)  \ar[r] & 
\Dev_{\pi_K}(\underrightarrow{\lim}M_i)  \ar[r]&
 0 ,\\
	}
	\end{equation*}
and $\Phi_{\pi_K}^\frakF$ is monic by Proposition \ref{monomorphism-unions}. 
Now the equivalence stated in this corollary is obvious.
\end{proof}

The condition $\Phi^{\pi_K}_\frakF$ is an epimorphism can be replaced by a factorization condition: 

\begin{lemma}\label{epi-dev-2}
Let $\beta:L\to P$ be a homomorphism and $\frakF=(M_{i}, v_{ij})_{i,j\in I}$ a direct system.
Consider the following statements:
\begin{enumerate}
 \item $\Phi^\beta_\frakF$ is an epimorphism;
 \item \begin{enumerate}
	\item if $\iota_K:K\to L$ is the kernel of $\beta$, then for every homomorphism 
	$f:L\to \underrightarrow{\lim}M_i$ there exists $i\in I$ and $h:L\to M_i$ such that $f\iota_K=v_i h\iota_K$ 
	(i.e. the restriction of $f$ to factorizes through the canonical map $v_i$),
	\item if $\overline{\beta}:L/K\to P$ is induced by $\beta$ then $\Phi^{\overline{\beta}}_\frakF$ is an epimorphism.
	\end{enumerate}
\end{enumerate}
Then $(2)\Rightarrow (1)$. If all $v_i$ are monomorphisms we have $(1)\Leftrightarrow (2)$.
\end{lemma}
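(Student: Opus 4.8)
The plan is to reduce both implications to the factorization criterion of Lemma \ref{epi-dev}, which says that $\Phi^\beta_\frakF$ is an epimorphism precisely when every $f\colon L\to\underrightarrow{\lim}M_i$ can be written as $f=g\beta+v_k h$ for suitable $k\in I$, $h\colon L\to M_k$ and $g\colon P\to\underrightarrow{\lim}M_i$. The two halves of condition (2) are designed to supply the two summands of such a decomposition: part (a) corrects $f$ along the kernel $K$, while part (b) handles the map induced on the quotient $L/K$, and the identity $\overline{\beta}\pi_K=\beta$ glues the two contributions together.

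For $(2)\Rightarrow(1)$, I would start with an arbitrary $f\colon L\to\underrightarrow{\lim}M_i$. By (2a) there are an index $i_0$ and $h_0\colon L\to M_{i_0}$ with $f\iota_K=v_{i_0}h_0\iota_K$, so $(f-v_{i_0}h_0)\iota_K=0$. Since $\pi_K$ is the cokernel of $\iota_K$, the difference factors uniquely as $f-v_{i_0}h_0=\overline{f}\pi_K$ for some $\overline{f}\colon L/K\to\underrightarrow{\lim}M_i$. Now (2b) asserts that $\Phi^{\overline{\beta}}_\frakF$ is an epimorphism, so Lemma \ref{epi-dev} applied to $\overline{\beta}\colon L/K\to P$ and the map $\overline{f}$ yields $k$, $\overline{h}\colon L/K\to M_k$ and $g\colon P\to\underrightarrow{\lim}M_i$ with $\overline{f}=g\overline{\beta}+v_k\overline{h}$. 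Substituting and using $\overline{\beta}\pi_K=\beta$ gives $f=g\beta+v_{i_0}h_0+v_k(\overline{h}\pi_K)$. Finally I would merge the two structural terms: choosing $m\in I$ with $m\geq i_0,k$ and setting $h=v_{i_0m}h_0+v_{km}\,\overline{h}\pi_K\colon L\to M_m$ gives $f=g\beta+v_m h$, so Lemma \ref{epi-dev} shows $\xi(f)\in\Ima(\Phi^\beta_\frakF)$. As $f$ is arbitrary and $\xi$ is surjective, $\Phi^\beta_\frakF$ is an epimorphism.

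For the converse $(1)\Rightarrow(2)$ under the additional hypothesis that all $v_i$ are monomorphisms, I would invoke Theorem \ref{cor-theorem-dev-restriction}: since $\Phi^\beta_\frakF$ is an epimorphism and $\frakF$ consists of monomorphisms, both $\Phi^{\overline{\beta}}_\frakF$ and $\Phi^{\pi_K}_\frakF$ are epimorphisms. The first statement is exactly (2b). To deduce (2a), I would apply Lemma \ref{epi-dev} to the epimorphism $\pi_K\colon L\to L/K$, whose kernel is again $K$: for every $f\colon L\to\underrightarrow{\lim}M_i$ there are $k$, $h\colon L\to M_k$ and $g\colon L/K\to\underrightarrow{\lim}M_i$ with $f=g\pi_K+v_k h$; precomposing with $\iota_K$ and using $\pi_K\iota_K=0$ gives $f\iota_K=v_k h\iota_K$, which is precisely (2a). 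This establishes $(1)\Leftrightarrow(2)$ in the monomorphism case.

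The main obstacle I anticipate is the bookkeeping of indices when recombining the two structural summands $v_{i_0}h_0$ and $v_k(\overline{h}\pi_K)$ into a single term $v_m h$; directedness of $I$ together with the compatibilities $v_j=v_m v_{jm}$ makes this routine, but it must be carried out carefully. A conceptual remark worth recording is that (2a) is in fact equivalent, with no hypothesis on $\frakF$, to the assertion that $\Phi^{\pi_K}_\frakF$ is an epimorphism, the factorization $f-v_{i_0}h_0=\overline{f}\pi_K$ above being exactly the criterion of Lemma \ref{epi-dev} for $\pi_K$. The monomorphism hypothesis enters only in the reverse implication, where Theorem \ref{cor-theorem-dev-restriction} relies on $\Phi^{\pi_K}_\frakF$ being monic (Proposition \ref{monomorphism-unions}) in order to recover the two ``epi'' components from $\Phi^\beta_\frakF$.
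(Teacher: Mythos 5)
Your proof is correct and follows essentially the same route as the paper: both directions reduce to the factorization criterion of Lemma~\ref{epi-dev} via the decomposition $\beta=\overline{\beta}\pi_K$, and your forward implication (including the merging of indices, which the paper abbreviates as ``we can suppose $i=j$'') matches the paper's argument. The only inessential difference is in deriving (2a) from (1): the paper applies Lemma~\ref{epi-dev} directly to $\beta$ and uses $\beta\iota_K=0$ (which, as a bonus, needs no monomorphism hypothesis), whereas you route through $\Phi^{\pi_K}_\frakF$ being an epimorphism via Theorem~\ref{cor-theorem-dev-restriction} --- consistent with your closing remark that (2a) is exactly the surjectivity of $\Phi^{\pi_K}_\frakF$.
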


\begin{proof}
$(2)\Rightarrow (1)$. Let $f:L\to \underrightarrow{\lim}M_i$ be a homomorphism. Using (a) we can find 
$i\in I$ and $h:L\to M_i$ such that $f\iota_K=v_i h\iota_K$. Then $(f-v_ih)\iota_K=0$, hence there exists 
$\overline{\delta}:L/K\to \underrightarrow{\lim}M_i$ such that $\overline{\delta}\pi=f-v_i h$. 

Using (b) and Lemma \ref{epi-dev}, 
we can find $j\in I$, $g:P\to \underrightarrow{\lim}M_i$ and $\gamma: L/K\to M_j$ such that 
$\overline{\delta}=g\overline{\beta}+v_j\gamma$. We can suppose $i=j$. Then 
$f-v_i h=g\overline{\beta}\pi+v_i\gamma \pi$, hence $f=g\beta+v_i(\gamma\pi+h)$. 
Another application of Lemma \ref{epi-dev} completes the proof.


$(1)\Rightarrow (2)$ Let $f:L\to \underrightarrow{\lim}M_i$ be a homomorphism. 
Using Lemma \ref{epi-dev} there exist $h:L\to M_i$ and $g:P\to \underrightarrow{\lim}M_i$ are homomorphisms such that 
$f=g \beta+v_i h$. Then 
$f\iota_K=v_i h\iota_K$, hence (a) is valid.

The condition (b) follows from Corollary \ref{cor-theorem-dev-restriction}.
\end{proof}

\begin{remark}
In fact the condition (b) in the above lemma can be proved directly.
In order to do this, let us consider a homomorphism $\overline{f}:L/K\to \underrightarrow{\lim}M_i$. If $\pi:L\to L/K$ 
is the canonical projection then we can find $i\in I$ and two homomorphisms $h:L\to M_i$, $g:P\to \underrightarrow{\lim}M_i$ 
such that $f=g \beta+v_i h$. Then $v_i h(K)=0$. Since $v_i$ is a monomorphism, we have $h(K)=0$. 
It follows that there exists $\overline{h}:L/K\to M_i$ 
such that $h=\overline{h}\pi.$ 
Then $(g\overline{\beta}+v_i\overline{h})\pi=\overline{f}\pi$, hence $g\overline{\beta}+v_i\overline{h}=\overline{f}$, 
and the proof is complete.
\end{remark}

{
In fact the case when $\Phi_\frakF^\beta$ is an epimorphism for all direct systems of monomorphisms can be characterized 
in the following way:

\begin{theorem}\label{epi-kernel}
Let $\beta:L\to P$ be a homomorphism in $\CC$. The following are equivalent:  
\begin{enumerate}
 \item for every direct system $\frakF=(M_{i}, v_{ij})_{i,j\in I}$ of monomorphisms $\Phi^\beta_\frakF$ is an epimorphism;
 \item \begin{enumerate}
	\item if $\iota_K:K\to L$ is the kernel of $\beta$, then $K$ can be embedded in a finitely generated 
	subobject $H\leq L$, and
	\item if $\overline{\beta}:L/K\to P$ is induced by $\beta$ then $\Phi^{\overline{\beta}}_\frakF$ is an epimorphism
	for all direct systems of monomorphisms $\frakF=(M_{i}, v_{ij})_{i,j\in I}$.
	\end{enumerate}
\end{enumerate}
\end{theorem}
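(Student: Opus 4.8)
The plan is to read the theorem off Lemma~\ref{epi-dev-2}, which has already isolated the two features occurring in condition~(2): the behaviour of $\beta$ on its kernel and the behaviour of the induced monomorphism $\overline\beta$. By that lemma, for a direct system $\frakF$ of monomorphisms the map $\Phi^\beta_\frakF$ is an epimorphism exactly when both (a) every $f\colon L\to\underrightarrow\lim M_i$ has its restriction $f\iota_K$ equal to $v_ih\iota_K$ for some $i$ and some $h\colon L\to M_i$, and (b) $\Phi^{\overline\beta}_\frakF$ is an epimorphism. Quantifying over all systems of monomorphisms, condition~(1) is therefore equivalent to the conjunction of ``(a) holds for every such $\frakF$'' with condition~(2)(b), which is carried along verbatim. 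Thus the entire content of the theorem is the equivalence between the factorization statement (a)-for-all-$\frakF$ and the finiteness statement~(2)(a). I would also record the same reduction through Theorem~\ref{theorem-dev-restriction} and Corollary~\ref{cor-theorem-dev-restriction}: since $\overline{\pi_K}=\id$, ``(a) holds for all $\frakF$'' is precisely ``$\Phi^{\pi_K}_\frakF$ is an epimorphism for all $\frakF$ of monomorphisms'', so the kernel part of the theorem is really a statement about the defect functor of the canonical epimorphism $\pi_K\colon L\to L/K$.

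For (1)$\Rightarrow$(2) the condition (2)(b) is immediate from Lemma~\ref{epi-dev-2}, so it remains to extract (2)(a). Here I would write $L=\underrightarrow\lim H_\lambda$ as the direct union of its finitely generated subobjects, whose structural maps are the inclusions and hence monomorphisms, and apply statement~(a) to the identity $f=1_L$. This produces an index $\lambda$ and a map $h\colon L\to H_\lambda$ with $\iota_K=\iota_{H_\lambda}\,(h\iota_K)$, so the monomorphism $\iota_K$ factors through the inclusion of the finitely generated subobject $H_\lambda$; that is exactly an embedding of $K$ into a finitely generated $H\le L$.

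The substance is the converse (2)$\Rightarrow$(1). By the (2)$\Rightarrow$(1) half of Lemma~\ref{epi-dev-2} it is enough to verify statement~(a) for an arbitrary direct system $\frakF$ of monomorphisms, since (b) is granted by~(2)(b). So, given $K\le H\le L$ with $H$ finitely generated, a direct union $L'=\underrightarrow\lim M_i$, and a homomorphism $f\colon L\to L'$, I must produce $i$ and $h\colon L\to M_i$ with $f\iota_K=v_ih\iota_K$. The opening move is forced: because $H$ is finitely generated, $\Psi^H_\frakF$ is an isomorphism, so $f\iota_H\colon H\to L'$ factors as $v_ih_0$ for some $i$ and some $h_0\colon H\to M_i$; restricting along the inclusion $j\colon K\to H$ gives $f\iota_K=v_i(h_0 j)$.

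The hard part --- and where I expect the main obstacle --- is to pass from $h_0 j\colon K\to M_i$, which is so far only the restriction of a map defined on $H$, to a genuine $h\colon L\to M_i$ realising the same composite into $L'$; equivalently, to arrange that the chosen factorization of $f\iota_K$ through $v_i$ is the restriction of an honest map $L\to M_i$. I would attack this via the obstruction in $\Ext^1_\CC(L/K,M_i)$: the connecting class $\delta_i(h_0 j)$ measures the failure of $h_0 j$ to extend along $\iota_K$ to $L$, and it dies in $\Ext^1_\CC(L/K,L')$ because $f$ is itself such an extension. The plan is to use the finite generation coming from~(2)(a) together with the exactness of direct limits in the Grothendieck category $\CC$ to push this vanishing back to a single term $M_{i'}$, after enlarging the index via $v_{ii'}$, so that $v_{ii'}h_0 j$ extends to the required $h\colon L\to M_{i'}$; a final appeal to Lemma~\ref{epi-dev} then closes the argument. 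Reconciling the mere ``contained in a finitely generated subobject'' hypothesis on $K$ with an extension problem posed on all of $L$ is the delicate point, and it is exactly where the finiteness in~(2)(a) and the cofinality built into the direct union have to be played against one another; everything surrounding it is bookkeeping with diagram~\eqref{D10}.
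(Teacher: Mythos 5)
Your direction (1)$\Rightarrow$(2) is fine, and your instinct about where the difficulty of (2)$\Rightarrow$(1) lies is exactly right --- but the gap you flag cannot be closed, because the implication (2)$\Rightarrow$(1) is false as stated. Take $\CC=Ab$, $L=\Z_p$ (the subgroup of $\Q$ from Example \ref{ex-fg-non}), $P=\Z_p/\Z$, and $\beta$ the canonical projection, so that $K=\Z$ is itself a finitely generated subobject of $L$ and $\overline{\beta}$ is the identity of $\Z_p/\Z$, whence $\Dev_{\overline{\beta}}=0$ and both (a) and (b) of condition (2) hold. For the chain $\frakF=(F_n)$ with $F_n=\frac{1}{n!}\Z$ and $\underrightarrow{\lim}F_n=\Q$ one has $\Hom(\Z_p,F_n)=0$, so $\underrightarrow{\lim}\Dev_\beta(F_n)=0$, while $\Dev_\beta(\Q)=\Hom(\Z_p,\Q)/\Ima(\Hom(\Z_p/\Z,\Q))\cong\Q\neq 0$ since $\Hom(\Z_p/\Z,\Q)=0$; hence $\Phi^\beta_\frakF$ is not an epimorphism. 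The defect is precisely the one you isolate: condition (a) of Lemma \ref{epi-dev-2} demands an $h$ defined on all of $L$ with $f\iota_K=v_ih\iota_K$, whereas finite generation of $H\supseteq K$ only yields some $h_0:H\to M_i$, and in the example no nonzero map $\Z_p\to F_n$ exists at all. (The paper's own proof of (2)$\Rightarrow$(1) commits exactly this slip, writing ``$h:L\to M_i$ such that $f\iota_H=v_ih$'' where only a map defined on $H$ has been produced.)

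Your proposed repair via the obstruction class in $\Ext^1_\CC(L/K,M_i)$ cannot succeed either: to push the vanishing of the obstruction from $\Ext^1_\CC(L/K,\underrightarrow{\lim}M_i)$ back to a single term $M_{i'}$ you would need the comparison map $\underrightarrow{\lim}\Ext^1_\CC(L/K,M_i)\to\Ext^1_\CC(L/K,\underrightarrow{\lim}M_i)$ to be injective, and for direct unions this fails in general --- Example \ref{ex-fg-non} already exhibits a defect functor whose comparison map on a chain of monomorphisms has nonzero kernel. The honest conclusion is that (2)(a) must be strengthened, e.g.\ to the factorization condition (2)(a) of Lemma \ref{epi-dev-2} quantified over all direct systems of monomorphisms or, equivalently by your own reduction through Theorem \ref{cor-theorem-dev-restriction}, to the requirement that $\Phi^{\pi_K}_\frakF$ be an epimorphism for all such $\frakF$; the containment of $K$ in a finitely generated subobject is a consequence of this, but not equivalent to it.
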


\begin{proof}
(1)$\Rightarrow$(2) We apply Lemma \ref{epi-dev-2} to obtain (b). For (a), we apply Theorem \ref{unions-1} to find
a homomorphism $g:P\to L$ such that $\Ima(1_L-g\beta)$ can be embedded in a finitely generated subobject $H$ of $L$. 
Then $K$ can be also embedded in $H$.

(2)$\Rightarrow$(1) It is enough to prove that for every direct system of monomorphisms and for every
$f:L\to \underrightarrow{\lim}M_i$ there exists $i\in I$ and $h:L\to M_i$ such that $f\iota_K=v_i h\iota_K$. 

Let $f:L\to \underrightarrow{\lim}M_i$ be a homomorphism. 
By (a) there exists a factorization $\iota_K=\iota_H\iota_{KH}$. Since $H$ is finitely generated there exists an index 
$i\in I$ such that $i_Hf$ factorized through $v_i$. Therefore there exists $h:L\to M_i$ such that $f\iota_H=v_i h$,
hence $f\iota_k=v_i h\iota_K$.
\end{proof}
}



In the end of this section we come back to the general case, in order to characterize the functor $\Dev_\beta$
associated to a monomorphism $\beta:L\to P$ for the 
case when we can find a subobject $H\leq L$ such that the induced homomorphism $\overline{\beta}:L/H\to P/\beta(H)$ 
is split mono. 

\begin{proposition}
Let $\beta:L\to P$ be a monomorphism and $H$ a subobject of $L$. If $\overline{\beta}:L/H\to P/\beta(H)$ is the 
homomorphism induced by $\beta$ then 
we have an exact sequence of functors
$$(P/\beta(H),-)\overset{\overline{\beta}^*}\to (L/H,-)\to \Dev_\beta\to \Dev_{\iota_{\beta(H)}}\to \Dev_{\iota_H}\to 0,$$
and the following are equivalent:
\begin{enumerate}
 \item the induced 
homomorphism $\overline{\beta}:L/H\to P/\beta(H)$ is splitting monomorphism;

\item the induced  
sequence of functors $$0\to \Dev_\beta\to \Dev_{\iota_{\beta(H)}}\to \Dev_{\iota_H}\to 0$$
is exact.
\end{enumerate} 
\end{proposition}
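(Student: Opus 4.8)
The plan is to obtain the five-term sequence from the morphism between the two short exact sequences
\[0\to H\overset{\iota_H}\to L\overset{\pi_H}\to L/H\to 0 \quad\text{and}\quad 0\to \beta(H)\overset{\iota_{\beta(H)}}\to P\overset{\pi_{\beta(H)}}\to P/\beta(H)\to 0.\]
Since $\beta$ is a monomorphism, its corestriction $\beta_H:H\to\beta(H)$ is an \emph{isomorphism}, and together with $\beta$ and $\overline{\beta}$ it constitutes a morphism of short exact sequences. Applying $\Hom(-,X)$ contravariantly and taking pointwise cokernels of the rightmost maps yields, naturally in $X$, two four-term exact sequences
\[0\to (P/\beta(H),-)\to (P,-)\to (\beta(H),-)\to \Dev_{\iota_{\beta(H)}}\to 0,\]
\[0\to (L/H,-)\to (L,-)\to (H,-)\to \Dev_{\iota_H}\to 0,\]
linked by the vertical transformations $\overline{\beta}^*$, $\beta^*$, $\beta_H^*$ and the induced transformation $c:\Dev_{\iota_{\beta(H)}}\to \Dev_{\iota_H}$.

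Next I would split each four-term sequence at its image term, producing two compatible morphisms of genuine short exact sequences, and apply the Ker-Coker Lemma to each. The second application exploits that $\beta_H^*$ is an isomorphism: it forces the induced map $j$ between the two image functors to be monic with $\Coker(j)\cong\Ker(c)$, and shows $c$ is an epimorphism. Feeding $\Ker(j)=0$ into the first application gives a short exact sequence $0\to \Dev_{\overline{\beta}}\to \Dev_\beta\to \Coker(j)\to 0$. Splicing the two outputs along $\Coker(j)\cong\Ker(c)$ yields precisely the asserted five-term sequence, in which the transformation $b:\Dev_\beta\to\Dev_{\iota_{\beta(H)}}$ satisfies $\Ker(b)\cong\Dev_{\overline{\beta}}$ and $\Ima(b)=\Ker(c)$, and the map $(L/H,-)\to\Dev_\beta$ has image $\Dev_{\overline{\beta}}=\Coker(\overline{\beta}^*)$ and kernel $\Ima(\overline{\beta}^*)$. (Concretely, $b$ is $[\psi]\mapsto[(\beta_H^*)^{-1}(\psi\iota_H)]$ and $c$ is $[\theta]\mapsto[\theta\beta_H]$, so the whole sequence can alternatively be checked by a direct pointwise diagram chase in $Ab$.)

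For the equivalence I would read everything off this sequence. The three-term sequence $0\to \Dev_\beta\to \Dev_{\iota_{\beta(H)}}\to \Dev_{\iota_H}\to 0$ is exact if and only if $b$ is a monomorphism, since exactness at $\Dev_{\iota_{\beta(H)}}$ and surjectivity of $c$ are already built into the five-term sequence. By the previous paragraph $\Ker(b)\cong\Dev_{\overline{\beta}}$, so this occurs exactly when $\Dev_{\overline{\beta}}=0$, i.e. when $\overline{\beta}^*$ is a pointwise epimorphism. A Yoneda argument finishes: evaluating the surjection $\overline{\beta}^*$ at $X=L/H$ and at $1_{L/H}$ produces a retraction of $\overline{\beta}$, while any retraction $r$ gives $\phi=\overline{\beta}^*(\phi r)$ for every $\phi$; hence $\Dev_{\overline{\beta}}=0$ is equivalent to $\overline{\beta}$ being a split monomorphism.

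The main obstacle is the bookkeeping: constructing the connecting transformation $b$ and verifying that the two Ker-Coker outputs splice correctly along $\Coker(j)\cong\Ker(c)$. The decisive structural input, used repeatedly, is that $\beta_H^*$ is an isomorphism, which is exactly what collapses the double application of the Ker-Coker Lemma to the clean five-term sequence and pins down $\Ker(b)\cong\Dev_{\overline{\beta}}$.
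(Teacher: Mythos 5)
Your argument is correct and follows essentially the same route as the paper: both apply contravariant $\Hom$ to the morphism of short exact sequences $0\to H\to L\to L/H\to 0$ and $0\to\beta(H)\to P\to P/\beta(H)\to 0$, exploit that $\beta_H\colon H\to\beta(H)$ is an isomorphism, and extract the five-term sequence by a snake/Ker--Coker argument, with the equivalence then reduced to the Yoneda-type fact that $\overline{\beta}$ is split mono iff $\overline{\beta}^*$ is a pointwise epimorphism. Your explicit formulas for $b$ and $c$ and the identification $\Ker(b)\cong\Dev_{\overline{\beta}}$ are accurate and make the splicing step fully checkable.
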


\begin{proof}
%
%
We have a commutative diagram
\[\xymatrix{          
& 0\ar[d] & 0\ar[d]&  & \\
0\ar[r]& H\ar[d]^{\iota_H} \ar[r]^{\beta'} & \beta(H)\ar[d]^{\iota_{\beta(H)}}\ar[r]& 0 &  \\
 0\ar[r] & L \ar[r]^\beta \ar[d]^{\pi_H} & P\ar[r] \ar[d]^{\pi_{\beta(H)}} & M \ar@{=}[d] \ar[r] & 0\\
  0\ar[r] & L/H \ar[r]^{\overline{\beta}} \ar[d]	& P/\beta(H)\ar[r] \ar[d] & M \ar[r] & 0\\ 
	  & 0 & 0&  & \\
	}\]
with exact sequences, which induces a the solid part of the following commutative diagram of functors and natural transformations
\[\xymatrix{          
&  & 0\ar[d]& 0\ar[d] &  &\\
0\ar[r] & (M,-) \ar@{=}[d] \ar[r] & (P/\beta(H),-) \ar[d]^{\pi_{\beta(H)}^*} \ar[r]^{\overline{\beta}^*} & 
(L/H,-) \ar@{-->}[r] \ar[d]^{\pi_H^*}	&  \Dev_\beta(-)\ar@{=}[d] &\\ 
0\ar[r] & (M,-)  \ar[r] & (P,-) \ar[d]^{\iota_{\beta(H)}^*} \ar[r]^{\beta^*} & (L,-) \ar[r]^{\xi_\beta} \ar[d]^{\iota_H^*}	
& \Dev_\beta(-) \ar[r] \ar[d]^0 & 0\\ 
      & 0\ar[r] & (\beta(H),-)\ar[r]^{\beta'^*}\ar[d]^{\xi_{u_{\beta(H)}}}&(H,-) \ar[d]^{\xi_{u_H}} \ar[r] & 0\ar@{=}[d] \\
		&  & \Dev_{\iota_{\beta(H)}}(-)\ar[r]\ar[d]& \Dev_{\iota_H}(-)\ar[d]\ar[r] & 0  \\
		&  & 0& 0 & \\
	}\]
 in which all lines and columns are exact sequences. 
 Applying the snake lemma we obtain the natural transformation $(L/H,-)\dashrightarrow \Dev_\beta(-)$ such that 
 the sequence 
 $$(P/\beta(H),-)\overset{\overline{\beta}^*}\to (L/H,-)\dashrightarrow \Dev_\beta\to 
 \Dev_{\iota_{\beta(H)}}\to \Dev_{\iota_H}\to 0$$
 is exact.
 
 Now the equivalence (1)$\Leftrightarrow$(2) is obvious since $\overline{\beta}$ is split mono iff the natural 
 homomorphisms $\Hom_\CC(\overline{\beta},X)$ are epimorphisms for all $X\in\CC$. 
 
\end{proof}

\section{Commuting with direct sums} \label{sect-dev-direct-sums}

Let $\frakF=(M_{i})_{i\in I}$ is a family of objects in $\CC$ and $\nu_i:M_{i}\to \bigoplus_i M_i$ are the canonical monomorphisms.
Recall that 
\[
\Phi^{\beta}_\frakF:\bigoplus_i\Dev_\beta(M_i)\to \Dev_\beta(\bigoplus_i M_i)
\]
denotes the natural homomorphisms induced by the family
$\Dev_\beta(\nu_i)$, $i\in I$. It is easy to see that $\Phi^{M}_\frakF$ is a monomorphism,
since $(\Psi^{M}_\frakF)^{-1}(\Ima(\Hom(\beta,\bigoplus_i M_i)= \bigoplus_i \Ima(\Hom(\beta,M_i))$.
Moreover, $\Dev_\beta$ commutes with respect to finite direct sums (it is additive).

For every family of objects $(M_i,i\in I)$ and for $J\subset I$ denote by $\pi_{J}$ the canonical projection $\bigoplus_{i\in I} M_i\to \bigoplus_{i\in J} M_i$.

\begin{theorem}\label{oplus}
If $\beta:L\to P$ is a homomorphism  and $(M_i,i\in I)$ a family of objects, the following are equivalent: 
\begin{enumerate}
 \item $\Dev_\beta$ commutes with respect to direct sum of $(M_i,i\in I)$.
 \item For every homomorphism 	$f:L\to \bigoplus_{i\in I} M_i$ there exists a finite subset $F\subset I$, and $g:P\to \bigoplus_{i\in I\setminus F} M_i$ 
	such that $\pi_{I\setminus F}f=g \beta$. 
\end{enumerate}
\end{theorem}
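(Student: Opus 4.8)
The plan is to exploit the remark recorded just before the statement, namely that the canonical map $\Phi^\beta_\frakF$ attached to a direct sum is always a monomorphism. Consequently $\Dev_\beta$ commutes with respect to the direct sum of $(M_i, i\in I)$ if and only if $\Phi^\beta_\frakF$ is an epimorphism, i.e.\ if and only if $\xi(f)\in\Ima(\Phi^\beta_\frakF)$ for every homomorphism $f:L\to\bigoplus_{i\in I}M_i$, where $\xi$ is the canonical map as in diagram \eqref{D10}. The key observation is that $\bigoplus_{i\in I}M_i$ is the direct limit of the directed system whose objects are the finite partial sums $\bigoplus_{i\in F}M_i$ (over finite $F\subseteq I$) and whose structural maps are the canonical inclusions $\nu_F:\bigoplus_{i\in F}M_i\to\bigoplus_{i\in I}M_i$. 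Since $\Dev_\beta$ is additive, it commutes with finite direct sums, so $\underrightarrow{\lim}_{F}\Dev_\beta(\bigoplus_{i\in F}M_i)=\bigoplus_{i\in I}\Dev_\beta(M_i)$, and the map $\Phi^\beta_\frakF$ built from the $\nu_i$ is identified with the limit comparison map of this system of finite subsums.

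Granting this identification, I would first apply Lemma~\ref{epi-dev} to the directed system of finite subsums. With $k\in I$ replaced by a finite subset $F\subseteq I$, with $M_k$ replaced by $\bigoplus_{i\in F}M_i$, and with $v_k$ replaced by $\nu_F$, the lemma yields that $\xi(f)\in\Ima(\Phi^\beta_\frakF)$ if and only if there exist a finite subset $F\subseteq I$, a homomorphism $h:L\to\bigoplus_{i\in F}M_i$, and a homomorphism $g:P\to\bigoplus_{i\in I}M_i$ with $f=g\beta+\nu_F h$. Thus condition (1) is equivalent to the assertion that every $f:L\to\bigoplus_{i\in I}M_i$ admits a factorization of this shape.

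It then remains to match this factorization condition with (2). For the implication toward (2), given $f=g\beta+\nu_F h$ I would apply $\pi_{I\setminus F}$ on the left; since $\nu_F h$ has image inside $\bigoplus_{i\in F}M_i$ and $F\cap(I\setminus F)=\emptyset$, we get $\pi_{I\setminus F}\nu_F h=0$, whence $\pi_{I\setminus F}f=(\pi_{I\setminus F}g)\beta$, and $g'=\pi_{I\setminus F}g:P\to\bigoplus_{i\in I\setminus F}M_i$ is the map required in (2). For the converse, I would use the splitting $\bigoplus_{i\in I}M_i=\bigoplus_{i\in F}M_i\oplus\bigoplus_{i\in I\setminus F}M_i$, which gives $\id=\nu_F\pi_F+\nu_{I\setminus F}\pi_{I\setminus F}$, hence $f=\nu_F\pi_F f+\nu_{I\setminus F}\pi_{I\setminus F}f$. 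Writing $\nu_{I\setminus F}$ for the inclusion of the complementary subsum and taking $g=\nu_{I\setminus F}g'$ and $h=\pi_F f$, the hypothesis $\pi_{I\setminus F}f=g'\beta$ of (2) gives $g\beta+\nu_F h=\nu_{I\setminus F}\pi_{I\setminus F}f+\nu_F\pi_F f=f$, which is the desired factorization.

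The only step demanding genuine care is the reduction in the first paragraph: one must check that the comparison map assembled from the $\nu_i$ really coincides with the colimit map of the finite-subsum system, which rests on the additivity of $\Dev_\beta$ together with the elementary identification $\underrightarrow{\lim}_{F}\bigoplus_{i\in F}\Dev_\beta(M_i)=\bigoplus_{i\in I}\Dev_\beta(M_i)$. Once this and the monomorphism property of $\Phi^\beta_\frakF$ are in place, the two implications above are the routine diagram computations sketched here, and the equivalence follows.
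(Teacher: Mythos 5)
Your proposal is correct and follows essentially the same route as the paper: view the direct sum as the direct limit of its finite subsums, use that $\Phi^\beta_\frakF$ is automatically a monomorphism so only surjectivity is at issue, and apply Lemma~\ref{epi-dev}. You merely spell out the routine translation between the factorization $f=g\beta+\nu_F h$ furnished by that lemma and condition (2), a step the paper leaves implicit.
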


\begin{proof} Since $\bigoplus_{i\in I}M_i$ is an inverse limit of the system $(M_F, F\in I^{<\omega})$ with canonical inclusions,
$\Phi^\beta_\frakF$ is a monomorphism. Now it remains to apply Lemma~\ref{epi-dev}.
\end{proof}

If $I$ is a set, $X\subseteq I$, and $M_i$, $i\in I$, is a family of objects,
we denote by $$\Pi_X^I:\Dev_\beta(\bigoplus_{i\in I}M_i)\to \Dev_\beta(\bigoplus_{i\in X}M_i)$$ the canonical epimorphism 
which is induced by the canonical map $\bigoplus_{i\in I}M_i\to \bigoplus_{i\in X}M_i$. Note that $\Pi_X^I$ 
is a splitting epimorphism of abelian groups.

Using a standard set-theoretical argument under assumption $(V=L)$ we prove that commuting of the functor 
$\Dev_\beta$ with countable direct sums
is equivalent to commuting with arbitrary direct sums.
First, we make an easy observation:

\begin{lemma}\label{L0} Let $M_i$, $i\in I$, be a family of objects. 
Then $\Dev_\beta$ commutes with $\bigoplus_{i\in I}M_i$ if and only if
for every $\epsilon\in \Dev_\beta(\bigoplus_{i\in I}M_i)$ there is a finite subset 
$F\subseteq I$ such that $\Pi_{I\setminus F}^I(\epsilon)=0$. 
\end{lemma}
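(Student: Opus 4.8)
The plan is to unwind the definitions on both sides of the
claimed equivalence and match them through the monomorphism
$\Phi^\beta_\frakF$. Recall that by the discussion at the start of
Section~\ref{sect-dev-direct-sums} the canonical map
$\Phi^\beta_\frakF:\bigoplus_{i\in I}\Dev_\beta(M_i)\to
\Dev_\beta(\bigoplus_{i\in I}M_i)$ is always a monomorphism, so
$\Dev_\beta$ commutes with $\bigoplus_{i\in I}M_i$ if and only if this
map is surjective, i.e. if and only if every element
$\epsilon\in\Dev_\beta(\bigoplus_{i\in I}M_i)$ lies in the image of
$\Phi^\beta_\frakF$. The whole statement is therefore an
identification of $\Ima(\Phi^\beta_\frakF)$ with the set of those
$\epsilon$ killed by $\Pi^I_{I\setminus F}$ for some finite $F$.

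First I would describe the image of $\Phi^\beta_\frakF$ concretely.
An element of $\bigoplus_{i\in I}\Dev_\beta(M_i)$ is a tuple with
finite support, so its image under $\Phi^\beta_\frakF$ is visibly an
element $\epsilon$ that ``lives on'' finitely many coordinates. The
key point is to express this support condition in terms of the
splitting projections $\Pi^I_X$. For a finite subset $F\subseteq I$,
the map $\Pi^I_{I\setminus F}$ is the projection onto the complementary
coordinates, and since the diagram defining $\Phi^\beta_\frakF$ is
compatible with the canonical inclusions and projections of the direct
sum, an element of the image concentrated on $F$ must map to $0$ under
$\Pi^I_{I\setminus F}$. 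This gives the forward inclusion: if
$\epsilon\in\Ima(\Phi^\beta_\frakF)$, then $\epsilon$ comes from a
finitely supported tuple, say supported on $F$, and hence
$\Pi^I_{I\setminus F}(\epsilon)=0$.

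For the converse, suppose $\epsilon\in\Dev_\beta(\bigoplus_{i\in I}M_i)$
satisfies $\Pi^I_{I\setminus F}(\epsilon)=0$ for some finite
$F\subseteq I$. Write $\bigoplus_{i\in I}M_i=M_F\oplus M_{I\setminus F}$,
where $M_F=\bigoplus_{i\in F}M_i$ and
$M_{I\setminus F}=\bigoplus_{i\in I\setminus F}M_i$. The hypothesis
says that under the induced splitting of $\Dev_\beta$ on this finite
direct sum decomposition, the component of $\epsilon$ in
$\Dev_\beta(M_{I\setminus F})$ vanishes, so $\epsilon$ is pulled back
from $\Dev_\beta(M_F)$ via $\Pi^I_F$. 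Here I would use that
$\Dev_\beta$ is additive, hence commutes with the \emph{finite} direct
sum $M_F=\bigoplus_{i\in F}M_i$; thus $\Dev_\beta(M_F)\cong
\bigoplus_{i\in F}\Dev_\beta(M_i)$, and the element of
$\Dev_\beta(M_F)$ producing $\epsilon$ corresponds to a finitely
supported tuple in $\bigoplus_{i\in I}\Dev_\beta(M_i)$ whose image
under $\Phi^\beta_\frakF$ is exactly $\epsilon$, by naturality of
$\Phi^\beta_\frakF$ with respect to the inclusion $M_F\hookrightarrow
\bigoplus_{i\in I}M_i$. This shows $\epsilon\in\Ima(\Phi^\beta_\frakF)$.

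The main obstacle I anticipate is making the bookkeeping with the
splitting maps $\Pi^I_X$ fully rigorous: one must verify that the
finite-direct-sum decomposition $\bigoplus_{i\in I}M_i=M_F\oplus
M_{I\setminus F}$ interacts correctly with $\Dev_\beta$ and with
$\Phi^\beta_\frakF$, i.e. that the componentwise vanishing really is
equivalent to factoring through $\Pi^I_F$, and that this factorization
lands in the image of the global map $\Phi^\beta_\frakF$ rather than
only some finite truncation. Everything reduces to the already-noted
facts that $\Phi^\beta_\frakF$ is monic, that $\Dev_\beta$ is additive
(so commutes with finite sums), and that the $\Pi^I_X$ are splitting
epimorphisms of abelian groups, so the argument is essentially a
diagram chase once the finite/infinite split is set up correctly.
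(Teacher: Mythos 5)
Your proof is correct and follows exactly the route the paper intends for this ``easy observation'': the paper proves no details but relies, as you do, on the two facts recorded just before the lemma, namely that $\Phi^\beta_\frakF$ is always a monomorphism (so commuting reduces to surjectivity) and that $\Dev_\beta$ is additive (so the finite block $M_F$ can be split off and its defect identified with $\bigoplus_{i\in F}\Dev_\beta(M_i)$). Your identification of $\Ima(\Phi^\beta_\frakF)$ with the elements killed by some $\Pi^I_{I\setminus F}$ is the intended content of the lemma, so nothing further is needed.
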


{A cardinal $\lambda=|I|$ is \textsl{$\omega$-measurable} 
if it is uncountable and there exists a 
countably-additive, non-trivial, $\{0,1\}$-valued measure $\mu$ on the power set of $I$ such that $\mu(I)=1$ 
and $\mu(\{x\})=0$ for all $x\in I$. We recall that if such a cardinal exists then there exists a smallest $\omega$-measurable
cardinal $\mu$ and all cardinals $\lambda\geq \mu$ are also $\omega$-measurable.
}

\begin{proposition}\label{measurable}
Let $\kappa$ be a cardinal less than the first $\omega$-measurable cardinal. If $\Dev_\beta$ commutes with respect to countable direct sums then   
$\Dev_\beta$ commutes with respect to direct sums of $\kappa$ objects.
\end{proposition}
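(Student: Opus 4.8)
The plan is to argue by contradiction: assuming $\kappa$ is uncountable (the finite case is covered by additivity of $\Dev_\beta$ and the countable case is the hypothesis) and that $\Dev_\beta$ fails to commute with some direct sum $\bigoplus_{i\in I}M_i$ with $|I|=\kappa$, I will manufacture a nonprincipal countably additive $\{0,1\}$-valued measure vanishing on points on a set of cardinality $\kappa$, which by definition makes $\kappa$ an $\omega$-measurable cardinal and contradicts $\kappa<\mu_0$, where $\mu_0$ denotes the first $\omega$-measurable cardinal. By Lemma~\ref{L0} the failure supplies a single element $\epsilon\in\Dev_\beta(\bigoplus_{i\in I}M_i)$ with $\Pi_{I\setminus F}^I(\epsilon)\neq 0$ for every finite $F\subseteq I$, and the entire construction will be read off from this $\epsilon$.

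First I would attach to $\epsilon$ the family $N=\{X\subseteq I\mid \Pi_X^I(\epsilon)=0\}$ and verify that it is a proper, countably complete ideal on $I$. Downward closure and closure under finite unions are immediate from the transitivity $\Pi_Y^X\Pi_X^I=\Pi_Y^I$ and from the additivity of $\Dev_\beta$ on finite direct sums, so that $\Pi_X^{X\sqcup Y},\Pi_Y^{X\sqcup Y}$ exhibit $\Dev_\beta(\bigoplus_{X\sqcup Y}M_i)$ as a biproduct. Properness is just $\epsilon\neq 0$, i.e. $I\notin N$. The essential point is \emph{countable} completeness: for disjoint $X_n\in N$ with union $Y$, the decomposition $\bigoplus_{i\in Y}M_i=\bigoplus_{n}\bigl(\bigoplus_{i\in X_n}M_i\bigr)$ is a \emph{countable} direct sum, so the hypothesis forces $\Pi_Y^I(\epsilon)$ to be determined by its components $\Pi_{X_n}^I(\epsilon)=0$, whence $\Pi_Y^I(\epsilon)=0$ and $Y\in N$. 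The same countable-sum argument shows $S=\{x\in I\mid \Pi_{\{x\}}^I(\epsilon)\neq 0\}$ is finite, an infinite $S$ yielding a countable subfamily on which $\epsilon$ has infinite support. Replacing $I$ by $I'=I\setminus S$ and $\epsilon$ by $\Pi_{I'}^I(\epsilon)$ — still a bad element, since $\Pi_{I'\setminus F'}^I(\epsilon)=\Pi_{I\setminus(S\cup F')}^I(\epsilon)\neq 0$ — I may assume every singleton lies in $N$, so that $N$ contains all finite sets and the dual filter $N^{*}$ is proper and contains the cofinite filter.

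Next I would extend $N^{*}$ to an ultrafilter $U$ on $I'$ by the ultrafilter lemma and prove that $U$ is \emph{automatically} countably complete. This is the step I expect to be the crux: a single element produces only the \emph{ideal} $N$, never a prime one, and countable completeness cannot survive a naive ultrafilter extension unless the hypothesis is used a second time. The mechanism is self-improving. For a countable partition $I'=\bigsqcup_{n}A_n$, commuting with the countable sum $\bigoplus_{n}\bigl(\bigoplus_{i\in A_n}M_i\bigr)$ forces $\Pi_{A_n}^I(\epsilon)=0$ for all but finitely many $n$, say for $n\notin F$ with $F$ finite; by countable completeness of $N$ the union $\bigcup_{n\notin F}A_n$ again lies in $N$, so its complement $\bigcup_{n\in F}A_n$ belongs to $N^{*}\subseteq U$, and being a finite union it forces some $A_n\in U$. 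This is exactly countable completeness of $U$. Since $U$ refines the cofinite filter it is nonprincipal, so the indicator $\mu(X)=1$ iff $X\in U$ is a nonprincipal countably additive $\{0,1\}$-valued measure on $\mathcal P(I')$ with $\mu(I')=1$ and $\mu(\{x\})=0$, witnessing that the cardinal $|I'|=\kappa$ is $\omega$-measurable — the contradiction sought.

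I would close by noting that the argument never needs a minimal-counterexample reduction or $\kappa$-completeness of $N$: only countable completeness of $N$ and the repeated, essentially ``bootstrapping'' use of commuting with countable sums (first to cut $N$ down to a countably complete ideal containing all singletons, then to force countable completeness of the extending ultrafilter). The remaining verifications are routine bookkeeping with the canonical projections $\Pi_X^I$ and Lemma~\ref{L0}; the genuinely delicate ingredient is the third paragraph, where the hypothesis is applied a second time to the partition to keep the extended ultrafilter countably complete.
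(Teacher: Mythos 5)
Your argument is correct, but it reaches the contradiction by a genuinely different route than the paper. The paper works with the ideal $\mathcal{I}(I)$ of subsets $X$ that become null after deleting a \emph{finite} subset (i.e.\ $\Pi^I_{X\setminus F}(\epsilon)=0$ for some finite $F\subseteq X$), shows via the countable-sum hypothesis that any countable pairwise disjoint family has a tail whose union lies in $\mathcal{I}(I)$, and then produces the measure by hand: a recursive splitting argument (if no suitable $Y\notin\mathcal{I}(I)$ existed one could build infinitely many pairwise disjoint sets outside the ideal, contradicting the tail property) yields a set $Y$ on which $\mathcal{I}(I)$ is prime, and the measure is $\mu(U)=1$ iff $U\cap Y\notin\mathcal{I}(I)$. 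You instead work with the exactly-null ideal $N$, dispose of the finitely many non-null singletons by passing to $I'=I\setminus S$, invoke the ultrafilter lemma to extend the dual filter, and then prove $\sigma$-completeness of \emph{any} such extension by applying the hypothesis a second time to countable partitions (all but finitely many blocks are null, so a finite union of blocks lies in the filter, so one block lies in the ultrafilter). Each step of yours checks out: countable completeness of $N$ and the finiteness of $S$ both follow from the finite-support property of elements in the image of $\Phi^\beta_\frakF$ for countable families, the reduction to $I'$ preserves the badness of $\epsilon$, and the partition criterion is equivalent to $\sigma$-completeness for ultrafilters. What the two approaches buy: yours cleanly isolates the two applications of the countable-sum hypothesis and avoids the recursive construction, at the cost of invoking the ultrafilter lemma; the paper's construction is choice-lighter (only the recursion) and exhibits the measure explicitly without an extension step. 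Both correctly conclude that $\kappa$ would be $\omega$-measurable, contradicting the hypothesis on $\kappa$.
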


\begin{proof}
Let $K_i$, $i\in I$, be a family of modules such that $I$ is of cardinality $\kappa$  and  $\epsilon\in \Dev_\beta(\bigoplus_{i\in I}K_i)$ is a fixed extension.
By Lemma~\ref{L0} it is enough to prove that there is a finite subset $F\subseteq I$ such that 
$\Pi_{I\setminus F}^I(\epsilon)=0$. 
Let consider the set 
$$\mathcal{I}(I)=\{X\subseteq I \mid \textrm{there is a finite subset } F\subseteq X\ \textrm{such that } 
\Pi^I_{X\setminus F}(\epsilon)=0\}.$$

Suppose that $I\notin \mathcal{I}(I)$. 
We claim that there exists a subset $Y\subseteq I$ such that $\mathcal{I}(Y)$ is a non-principal $\omega_1$-complete ideal. 
Let us observe that for every subsets 
$F\subseteq X\subseteq I$ we have $\Pi^I_{X\setminus F}=\Pi^X_{X\setminus F}\Pi^I_{X}$. 
Furthermore, it is not hard to see that 
$\mathcal{I}(I)$ contains $\emptyset$ and it is closed with respect to subsets and finite unions. In order to complete the proof 
of our claim it is remains to prove that if $X_n$, $n\in\N$, is a countable set of pairwise disjoint subsets of $I$ then there
exists $n_0\in \N$ such that $\bigcup_{n\geq n_0}X_n\in \mathcal{I}(I)$. 

Let $X_n$, $n\in\N$, be a family of pairwise disjoint subsets of $I$, and $X=\bigcup_{n\in\N}X_n$. Since $\Dev_\beta$ 
commutes with respect to countable direct sums, the canonical homomorphism 
$$\bigoplus_{n\in \N}\Dev_\beta(\bigoplus_{i\in X_n}K_i)\to \Dev_\beta(\bigoplus_{n\in \N}(\bigoplus_{i\in X_n}K_i))=
\Dev_\beta(\bigoplus_{i\in X}K_i)$$
is an isomorphism. Therefore there is a positive integer $n_0$ such that 
$$\Pi^I_{\bigcup_{n\geq n_0}X_n}(\epsilon)=\Pi^X_{\bigcup_{n\geq n_0}X_n}(\Pi^I_X(\epsilon))=0,$$ 
 hence $\bigcup_{n\geq n_0}X_n\in\mathcal{I}(I)$. 


{
Now we claim that there exists $Y\notin\CI(I)$ such that for 
every subset $Z\subset Y$ with $Z\notin \mathcal{I}(I)$ we have  $Y\setminus Z\in \mathcal{I}(I)$.

Suppose by contradiction that such a $Y$ does not exists. It follows that for every $Y\notin \CI(I)$ there
exists a nonempty subset $Z\subseteq Y$ such that $Z,Y\setminus Z\notin \CI(I)$. Since 
$\emptyset\neq I\notin \CI(I)$ we can find a partition $I=Z_0\cup Y_1$ such that $Z_0,Y_1\notin \CI(I)$. 
Now $\emptyset\neq Y_1\notin\CI(I)$, hence there exists a partition $Y_1=Z_1\cup Y_2$ such that $Z_1,Y_2\notin \CI(I)$.
We continue inductively this kind of choice: if $Y_n$ is constructed then  
exists a partition $Y_n=Z_n\cup Y_{n+1}$ such that $Z_n,Y_{n+1}\notin \CI(I)$. Therefore we obtain a countable sequence 
of sets $Z_n\notin \CI(I)$, and it is not hard to see that these sets are pairwise disjoint. But, by what we proved so far,
there exists $n_0$ such that $\bigcup_{n\geq n_0}Z_n\in\mathcal{I}(I)$, a contradiction.

Then there exists a subset $y\subseteq I$ such that $Y\notin\CI(I)$ and for 
every subset $Z\subset Y$ with $Z\notin \mathcal{I}(I)$ we have  $Y\setminus Z\in \mathcal{I}(I)$.
It is not hard to see that we can define an $\omega$-additive 
$\{0,1\}$-valuated map $\mu$ on the power-set of $I$ via the rule $\mu(U)=1$ if $U\cap Y\notin \mathcal{I}(I)$, 
and $\mu(U)=0$ otherwise. It follows that $I$ is $\omega$-measurable, a contradiction.
}\end{proof}

\begin{corollary}
Assume $(V=L)$. If $\Dev_\beta(-)$ commutes with respect to countable direct sums then   
$\Dev_\beta(-)$ commutes with respect to all direct sums.
\end{corollary}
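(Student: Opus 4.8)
The plan is to obtain the corollary immediately from Proposition~\ref{measurable}, the only extra ingredient being the set-theoretic fact that under the axiom of constructibility there are no $\omega$-measurable cardinals. First I would recall why $\omega$-measurability, as defined just before the proposition, is a genuine large-cardinal hypothesis: a countably-additive nontrivial $\{0,1\}$-valued measure $\mu$ on $\mathcal{P}(I)$ with $\mu(I)=1$ and $\mu(\{x\})=0$ for every $x$ is nothing but a nonprincipal $\omega_1$-complete ultrafilter on $I$ (the sets of measure $1$). By the classical Ulam argument, at the least cardinal carrying such an ultrafilter the ultrafilter is in fact $\kappa$-complete, so that least cardinal is (two-valued) measurable. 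In particular the existence of an $\omega$-measurable cardinal is equivalent to the existence of a measurable cardinal.

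Next I would invoke Scott's theorem that $V=L$ implies that there is no measurable cardinal. Combined with the previous observation, this shows that under $V=L$ no $\omega$-measurable cardinal exists. (Here one also uses the remark recorded before the proposition that once a smallest $\omega$-measurable cardinal exists it persists upward, so the nonexistence of a least one is the same as the nonexistence of any.) Thus, assuming $V=L$, the phrase ``the first $\omega$-measurable cardinal'' refers to nothing, and \emph{every} cardinal $\kappa$ is vacuously smaller than it.

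Consequently Proposition~\ref{measurable} applies to every cardinal $\kappa$: the hypothesis that $\Dev_\beta$ commutes with respect to countable direct sums yields that $\Dev_\beta$ commutes with respect to direct sums of $\kappa$ objects, for each $\kappa$. Since an arbitrary direct sum $\bigoplus_{i\in I}M_i$ is indexed by a set $I$ of some cardinality $\kappa$, this gives commuting with respect to all direct sums, as claimed. I do not expect any real obstacle here; the single point requiring care is matching the measure-theoretic definition of $\omega$-measurability used in the paper to the standard notion of a measurable cardinal, i.e. checking that the least $\omega$-measurable cardinal is regular and that its measure may be taken to be $\kappa$-additive, so that Scott's theorem applies verbatim.
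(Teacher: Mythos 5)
Your proposal is correct and follows exactly the route the paper intends: the corollary is stated as an immediate consequence of Proposition~\ref{measurable}, using that under $V=L$ there are no $\omega$-measurable cardinals (Scott's theorem plus the fact that the least $\omega$-measurable cardinal is measurable). The set-theoretic details you supply are accurate and simply make explicit what the paper leaves implicit.
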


It is well-known that $\Hom(M,-)$ commutes with respect to countable direct sums iff it commutes with respect to all direct sums.
Furthermore, as a consequence of the previous result and Example~\ref{dev} we obtain 

\begin{corollary}
Let $(V=L)$ and $M\in \CC$. Then $\Ext^1_\CC(M,-)$ commutes with respect to countable direct sums if and only if
$\Ext^1_\CC(M,-)$ commutes with respect to all direct sums.
\end{corollary}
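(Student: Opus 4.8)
The plan is to reduce the statement to the preceding corollary by realizing $\Ext^1_\CC(M,-)$ as a defect functor. The backward implication is immediate: if $\Ext^1_\CC(M,-)$ commutes with respect to all direct sums, then in particular it commutes with respect to the countable ones, which form a special case. Hence the entire content lies in the forward implication, and for this I would exhibit a homomorphism $\beta$ with $\Dev_\beta\cong\Ext^1_\CC(M,-)$ and then invoke the previous corollary.

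First I would choose a projective presentation $0\to L\overset{\beta}{\to} P\to M\to 0$ of $M$, so that $\beta$ is a monomorphism, $P$ is projective, and $M\cong P/\beta(L)=\Coker(\beta)$. By Example~\ref{dev}(1) the defect functor $\Dev_\beta(-)$ is then canonically (naturally) isomorphic to $\Ext^1_\CC(M,-)$. I would then transport the commuting properties across this natural isomorphism: a natural isomorphism $\Dev_\beta\cong\Ext^1_\CC(M,-)$ intertwines the canonical comparison maps $\Phi^\beta_\frakF\colon\bigoplus_i\Dev_\beta(M_i)\to\Dev_\beta(\bigoplus_i M_i)$ with the analogous comparison maps for $\Ext^1_\CC(M,-)$, so for any family $\frakF=(M_i)_{i\in I}$ one functor commutes with respect to $\bigoplus_i M_i$ if and only if the other does. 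Consequently $\Ext^1_\CC(M,-)$ commutes with countable (resp. arbitrary) direct sums exactly when $\Dev_\beta$ does. Applying the preceding corollary — which is where the hypothesis $(V=L)$ is consumed — to $\Dev_\beta$ shows that commuting with respect to countable direct sums forces commuting with respect to all direct sums, and translating this back through the isomorphism yields the forward implication for $\Ext^1_\CC(M,-)$.

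The only point requiring care is the very first step, namely producing the projective presentation that makes Example~\ref{dev}(1) applicable; this is where the availability of enough projective objects in $\CC$ enters (guaranteed in the setting of a projective generator considered here, so that $M\cong\Coker(\beta)$ with $P$ projective can always be arranged). Once such a $\beta$ is fixed, the remainder is the formal transport of a commuting property along a natural isomorphism together with a single appeal to the previous corollary, so no additional set-theoretic or homological argument beyond what that corollary already provides is needed.
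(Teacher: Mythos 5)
Your proposal is correct and follows essentially the same route as the paper, which derives this corollary directly from the preceding corollary on $\Dev_\beta$ together with Example~\ref{dev}(1) identifying $\Dev_\beta\cong\Ext^1_\CC(M,-)$ for a projective presentation. Your additional remarks on transporting the comparison maps along the natural isomorphism and on the need for enough projectives merely make explicit what the paper leaves implicit.
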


\begin{remark}
We don't know what is happen if we remove the set theoretic assumption $(V=L)$. For the case of abelian groups it can be 
proved, as in \cite[Theorem 5.3]{ABS} that if $M$ is an abelian group such that 
$\Ext^1_{Ab}(M,-)$ commutes with respect to countable direct sums then $M$ is an Whitedead group. On the other side,
the same result show us that if $\Ext^1_{Ab}(M,-)$ commutes with respect to all direct sums then $M$ is free. The 
interested reader can find some similar phenomena in \cite[Section 2]{ABS1}.
\end{remark}

The following results characterizes when $\Dev_\beta$ commutes with respect to countable direct sums. 
{
It generalizes a classical characterization of small modules proved by Rentschler in \cite{Re}.
}

\begin{proposition}\label{def-omega} Let $\beta:L\to P$ be a homomorphism. 
The functor $\Dev_\beta$ commutes with respect to countable direct sums if and only if for every countable chain of
subobjects $$(D): L_0\hookrightarrow L_1\hookrightarrow L_2\hookrightarrow\dots$$ such that $L$ is a direct union of $(D)$
there exists $n$ for which the induced map $\beta': L/L_n\to P/\beta(L_n)$ is a splitting monomorphism.
\end{proposition}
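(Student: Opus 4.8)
The plan is to characterize the commuting property via the direct-sum criterion from Theorem~\ref{oplus}, applied to the specific directed systems built from countable chains. The key technical tool is Lemma~\ref{L0}, which reduces commuting with a countable direct sum $\bigoplus_{n}M_n$ to the condition that every element of $\Dev_\beta(\bigoplus_n M_n)$ is annihilated by some projection $\Pi^{I}_{I\setminus F}$ for a finite $F$. I would therefore work throughout with a fixed homomorphism $f:L\to\bigoplus_n M_n$ and exploit the correspondence, coming from Lemma~\ref{epi-dev}, between the factorization $\pi_{I\setminus F}f=g\beta$ and the vanishing of the image of $f$ under the relevant projection.

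First I would prove the forward implication. Assume $\Dev_\beta$ commutes with respect to countable direct sums, and let $(D)$ be a countable chain $L_0\hookrightarrow L_1\hookrightarrow\cdots$ whose direct union is $L$. The natural idea is to feed the canonical map $f:L\to\underrightarrow{\lim}(L/L_n)\cong L/\bigcap$-type target into the machinery; more precisely, I would consider the quotients $L/L_n$ together with the canonical projections and assemble them so that Theorem~\ref{oplus} applies to the identity-type map $L\to\bigoplus_n (L/L_n)$ or to a telescoping construction. The goal is to extract, from the finite-support factorization guaranteed by commuting, an index $n$ and a homomorphism $g:P/\beta(L_n)\to L/L_n$ splitting the induced map $\beta':L/L_n\to P/\beta(L_n)$. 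Concretely, the factorization $\pi_{I\setminus F}f=g\beta$ for a finite $F$ should, after choosing $n$ larger than all indices in $F$, descend to the quotient level and yield exactly the left inverse of $\beta'$; this mirrors the passage between conditions (2) and (6) in Theorem~\ref{unions-1}.

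For the converse, I would start from a countable family $(M_i)_{i\in\N}$ and an arbitrary $f:L\to\bigoplus_i M_i$, and build the chain $(D)$ by setting $L_n=f^{-1}(\bigoplus_{i\le n}M_i)$, the preimage of the finite subsums. Since $L$ is the direct union of these preimages (because every element of $\bigoplus M_i$ has finite support), the hypothesis produces an $n$ for which $\beta':L/L_n\to P/\beta(L_n)$ is a splitting monomorphism. Lifting the splitting along the canonical epimorphism $P\to P/\beta(L_n)$—here I would have to check that $P\to P/\beta(L_n)$ admits the needed lifting, or argue as in condition (6) of Theorem~\ref{unions-1} that $\beta(L_n)$ being the image of a finitely relevant piece lets the left inverse lift—I expect to obtain $g:P\to\bigoplus_{i>n}M_i$ with $\pi_{\{i>n\}}f=g\beta$, which is precisely condition (2) of Theorem~\ref{oplus} for a cofinite complement, hence commuting.

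The main obstacle I anticipate is the lifting step and the bookkeeping of images. Unlike the projective case, where Theorem~\ref{unions-1}(6) simplifies because every map lifts, here $P$ is arbitrary, so I must carefully track that the left inverse of $\beta'$ at the quotient level comes from an honest homomorphism $g:P\to\bigoplus_{i>n}M_i$ rather than merely a map defined modulo $\beta(L_n)$. I expect this to hinge on the fact that $f$ itself carries $L_n$ into $\bigoplus_{i\le n}M_i$, so that projecting onto the complementary subsum kills the contribution of $L_n$ and allows the induced quotient map to be realized without any genuine lifting obstruction—this is the analogue of why, in the direct-sum setting, the awkward condition of Example~\ref{Ex-cond(6)} does not interfere. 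Verifying this compatibility between the chain $(D)$ and the finite-support decomposition, in both directions, is where the real care is needed.
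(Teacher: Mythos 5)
Your proposal follows essentially the same route as the paper's proof: in the forward direction you apply Theorem~\ref{oplus} to the canonical map $L\to\bigoplus_n L/L_n$ and, for $n\notin F$, descend $\pi_n g$ to a left inverse of $\beta'$, while in the converse you take $L_n=f^{-1}(\bigoplus_{i\le n}M_i)$ and realize $g$ as the composite $P\to P/\beta(L_n)\to L/L_n\to\bigoplus_{i>n}M_i$ — exactly the paper's construction, and the lifting obstruction you worry about indeed evaporates for precisely the reason you give (no map $P\to L$ is ever needed, only the composite into the cofinite subsum). The argument is correct.
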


\begin{proof} ($\Rightarrow$) For each $i\in \N$ denote by $\iota_i:L_i\to L$ the canonical monomorphism 
and put $A_i=\Coker(\iota_i)\cong L/L_i$.
Suppose that  $\sigma:L\to \bigoplus_iA_i$ is the morphism defined by direct sum of the canonical epimorphisms 
$\rho_i:L\to A_i$, i.e. $\pi_i\sigma=\rho_i$, where $\pi_i:\bigoplus_iA_i\to A_i$ is the canonical projection. 
By the hypothesis, there exists a finite subset $F\subseteq \mathbb{N}$ and a homomorphism 
$g:P\to \bigoplus_{i\in \N\setminus F}A_i$ 
such that $\pi_{\N\setminus F}\sigma=g\beta$.
Let $n\notin F$. If $\rho_{n}:L\to A_n$ represents the canonical epimorphism, we obtain $\rho_n=\pi_ng\beta$, hence 
$\pi_n g \beta(L_n)=0$. Then $\pi_ng$ factorizes through the canonical epimorphism $\mu_n:P\to P/\beta(L_n)$, so 
$\pi_ng= g'\mu_n$ and $g':P/\beta(L_n)\to A_n$. 

We obtain $g'\beta'\rho_n=g'\mu_n\beta=\pi_n g\beta=\rho_n$, hence $g'\beta'=1_{A_n}$, and the proof is complete.

($\Leftarrow$) Let $\sigma\in\Hom(L,\bigoplus_{i<\omega} A_i)$ and denote by $\pi_{\ge n}$ the canonical epimorphisms 
$\bigoplus_{i} A_i\to \bigoplus_{i\ge n} A_i$.  Obviously, the family $L_n=\Coker(\pi_{\ge n}\alpha)$ with canonical monomorphisms forms an increasing chain 
such that $L$ is its direct union.

By the hypothesis there exists $n$ such that $\beta':L/L_n\to P/\beta(L_n)$ has a left inverse $g':P/\beta(L_n)\to L/L_n$. 

If we put $F=\{1,\dots,n\}$ and $g:P\to \bigoplus_{i>n}A_i$, $g=g'\mu_n$, where $\mu_n:P\to P/\beta(L_n)$ is the 
canonical epimorphism, then we can apply Theorem \ref{oplus} to obtain the conclusion.
\end{proof}


We will
say that the homomorphism $\beta:L\to P$ is {\it $\kappa$-splitting small}, where $\kappa$ is a cardinal, 
if for every system of objects 
$(A_i, i<\kappa)$ and for every homomorphism $\sigma:L\to \bigoplus_{i<\kappa}A_i$ there exists a finite subset $F\subset\kappa$ such that
the cokernel homomorphism $\rho$ in the pushout diagram
\begin{equation*}
\begin{CD}
L@>{\beta}>>P@>>> U @>>> 0 \\
@V{\pi_{\kappa\setminus F}\sigma}VV@ VVV @|@.\\
\bigoplus_{i\in \kappa\setminus F}A_i@>>> X @>{\rho}>> U @>>> 0
\end{CD} \tag{D2}
\label{D3}
\end{equation*}
splits.

Now, we make an elementary observation.

\begin{lemma}\label{splitting} Consider a commutative diagram with exact rows and columns
\begin{equation}
\begin{CD}
A_0@>{\nu_0}>> B_0@>{\pi_0}>> C@>>>0 \\
@V{\alpha_0}VV @V{\beta_0}VV @|@.@.\\
A_1@>{\nu_1}>> B_1@>{\pi_1}>> C@>>>0
\end{CD} \tag{D3} \label{D2}.
\end{equation}
If  $\overline{\pi_0}:B_0/\Ker \beta_0\to C\to 0$ induced by \eqref{D2} splits,
then $\pi_1$ splits as well.
\end{lemma}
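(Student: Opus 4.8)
The plan is to factor the left-hand vertical map $\beta_0$ through the quotient $B_0/\Ker\beta_0$ and then transport a section of $\overline{\pi_0}$ across this factorization to produce a section of $\pi_1$. Write $q:B_0\to B_0/\Ker\beta_0$ for the canonical projection. Because the right square commutes with the identity on $C$, one has $\pi_1\beta_0=\pi_0$, whence $\Ker\beta_0\subseteq\Ker\pi_0$; this is precisely what allows $\pi_0$ to descend to the map $\overline{\pi_0}:B_0/\Ker\beta_0\to C$ from the hypothesis, characterized by $\overline{\pi_0}q=\pi_0$.

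The step I would isolate as the crux is the following compatibility. The map $\beta_0$ also descends along $q$, giving a morphism $\overline{\beta_0}:B_0/\Ker\beta_0\to B_1$ with $\overline{\beta_0}q=\beta_0$ (in fact a monomorphism, though monicity will not be needed), and these two induced maps fit together as $\pi_1\overline{\beta_0}=\overline{\pi_0}$. I would verify this identity by cancelling the epimorphism $q$: from $\pi_1\overline{\beta_0}q=\pi_1\beta_0=\pi_0=\overline{\pi_0}q$ and $q$ epic one obtains $\pi_1\overline{\beta_0}=\overline{\pi_0}$.

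Granting the identity, the conclusion is formal. Let $s:C\to B_0/\Ker\beta_0$ be a section of $\overline{\pi_0}$, so that $\overline{\pi_0}s=1_C$. I would then set $t=\overline{\beta_0}s:C\to B_1$ and compute $\pi_1 t=\pi_1\overline{\beta_0}s=\overline{\pi_0}s=1_C$, exhibiting $t$ as a section of $\pi_1$; hence $\pi_1$ splits.

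I do not anticipate a genuine obstacle: the argument is a short diagram chase valid in any abelian category. The only points requiring care are the verification $\Ker\beta_0\subseteq\Ker\pi_0$ that makes $\overline{\pi_0}$ well-defined, and the bookkeeping behind the single identity $\pi_1\overline{\beta_0}=\overline{\pi_0}$; the exactness of the rows enters only to ensure that $\pi_0$ and $\pi_1$ are epimorphisms, so that \emph{splitting} carries its intended meaning.
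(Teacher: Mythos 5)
Your argument is correct and is essentially the paper's own proof: the paper likewise forms the induced map $\overline{\beta}_0:B_0/\Ker\beta_0\to B_1$, uses the identity $\overline{\pi_0}=\pi_1\overline{\beta_0}$, and takes $\overline{\beta_0}\rho$ as the section of $\pi_1$. Your write-up merely makes explicit the well-definedness of $\overline{\pi_0}$ and the cancellation of the epimorphism $q$, which the paper leaves implicit.
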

\begin{proof} Let $\overline{\beta}_0:B_0/\Ker \beta_0\to B_1$ the homomorphism induced by $\beta_0$.
Since there exists $\rho:C\to B_0/\Ker \beta_0$ such that $\id_C=\overline{\pi_0}\rho=\pi_1\overline{\beta_0}\rho$, the homomorphism $\pi_1$ splits.
\end{proof}

\begin{proposition}\label{omega} The homomorphism $\beta$ is  $\omega$-splitting small if and only if for 
$\Dev_\beta$ commutes with respect to countable direct sums.
\end{proposition}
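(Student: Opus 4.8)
The plan is to read both implications off Theorem~\ref{oplus}, which already translates commuting with a fixed direct sum into a purely homomorphism-theoretic factorization condition. Applying it to all countable families $(M_i,i<\omega)$, the functor $\Dev_\beta$ commutes with respect to countable direct sums if and only if for every $f\colon L\to\bigoplus_{i<\omega}M_i$ there are a finite $F\subset\omega$ and a map $g\colon P\to\bigoplus_{i\in\omega\setminus F}M_i$ with $\pi_{\omega\setminus F}f=g\beta$. Since $\omega$-splitting smallness is quantified the same way (for every $\sigma$ there is a finite $F$ such that $\rho$ splits), it suffices to show that the factorization $\pi_{\omega\setminus F}\sigma=g\beta$ and the splitting of the cokernel map $\rho$ in diagram~\eqref{D3} carry the same information. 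Throughout I write the pushout square of $\beta$ and $\alpha:=\pi_{\omega\setminus F}\sigma$ with structural maps $j\colon P\to X$ and $k\colon\bigoplus_{i\in\omega\setminus F}A_i\to X$, and use that $\rho$ is the cokernel of $k$, so that $\rho k=0$ and $\rho j=\mu$, where $\mu\colon P\to U=\Coker(\beta)$ is the canonical epimorphism.

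First I would treat the direction \emph{``$\Dev_\beta$ commutes $\Rightarrow$ $\beta$ is $\omega$-splitting small''}. Given $\sigma$, Theorem~\ref{oplus} produces $F$ and $g$ with $\alpha=g\beta$. The map $j-kg\colon P\to X$ then annihilates $\Ima(\beta)$, since $(j-kg)\beta=k\alpha-kg\beta=0$; hence it factors as $s\mu$ for a unique $s\colon U\to X$. From $\rho(j-kg)=\rho j-\rho kg=\mu$ and the surjectivity of $\mu$ we obtain $\rho s=1_U$, so $\rho$ splits. One may instead invoke Lemma~\ref{splitting} after checking that the factorization forces the induced map $P/\Ker(j)\to U$ to split; in any case this implication is immediate.

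For the converse I would run the same diagram backwards. Applying $\omega$-splitting smallness to $\sigma=f$ yields a finite $F$ and a section $s\colon U\to X$ of $\rho$. Then $j-s\mu\colon P\to X$ satisfies $\rho(j-s\mu)=\mu-\mu=0$, so it factors through $\Ker(\rho)=\Ima(k)$, while $(j-s\mu)\beta=k\alpha$ because $\mu\beta=0$. If $k$ is a monomorphism we may invert it on its image and set $g:=k^{-1}(j-s\mu)$, which gives $g\beta=k^{-1}k\alpha=\alpha=\pi_{\omega\setminus F}f$, exactly condition~(2) of Theorem~\ref{oplus}, whence commuting.

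The main obstacle is precisely that $k$ need not be monic: one computes $\Ker(k)=\alpha(\Ker(\beta))$, so the inversion step fails when $\beta$ has a large kernel whose image under $\alpha$ is spread over infinitely many coordinates, and this is where all the work of the converse sits. I would remove the obstacle by reducing to the monomorphism case through the canonical decomposition of $\beta$. Writing $\iota_K\colon K\to L$ for the kernel, $\pi_K\colon L\to L/K$ for the projection, and $\overline\beta\colon L/K\to P$ for the induced monomorphism, I note that a direct sum is a direct union (its structural maps are monic), so Theorems~\ref{theorem-dev-restriction} and~\ref{cor-theorem-dev-restriction}, together with Proposition~\ref{monomorphism-unions}, split the commuting problem for $\Dev_\beta$ into the corresponding problems for $\Dev_{\overline\beta}$ and $\Dev_{\pi_K}$. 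For the monomorphism $\overline\beta$ one has $\Ker(k)=0$, so the argument of the previous paragraph applies verbatim, while the epimorphism $\pi_K$ is governed by the finite generation of $K$ inside $L$ as in the direct-union analysis. I expect Lemma~\ref{splitting} to be the cleanest device for packaging the two diagram chases uniformly, and the monomorphicity of $k$ after this reduction to be the decisive point that makes the converse go through.
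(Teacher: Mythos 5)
Your route is genuinely different from the paper's. The paper never tries to translate the splitting of $\rho$ directly into the factorization condition of Theorem~\ref{oplus}; instead it passes everything through Proposition~\ref{def-omega}, the characterization of commuting with countable direct sums by countable chains $L_0\hookrightarrow L_1\hookrightarrow\cdots$ with union $L$. For one implication it applies $\omega$-splitting smallness to the map $\sigma\colon L\to\bigoplus_n L/L_n$ built from such a chain and pushes the resulting splitting further out to a single coordinate $A_n=L/L_n$, identifying the pushout there with $P/\beta(L_n)$; for the other it builds the chain $L_n=\Ker(\pi_{\ge n}\sigma)$ from a given $\sigma$ and transfers the splitting of $P/\beta(L_n)\to U$ to the splitting of $\rho$ via Lemma~\ref{splitting}, the point being that $\Ker(P\to X)=\beta(\Ker(\pi_{\ge n}\sigma))=\beta(L_n)$. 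Your direct chase on the pushout square is cleaner in the direction ``commutes $\Rightarrow$ splitting small'' (the section obtained from $j-kg=s\mu$ is exactly right), and your converse is complete whenever $k$ is monic, i.e.\ whenever $\alpha(\Ker\beta)=0$ --- in particular whenever $\beta$ is a monomorphism, which is the case the paper actually uses (cf.\ the ``$0\to L\overset{\beta}\to P$'' of Theorem~\ref{thm2} and all the $\Ext^1$ applications).

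The gap is your final reduction paragraph. Theorem~\ref{cor-theorem-dev-restriction} splits the \emph{commuting} problem for $\Dev_\beta$ into those for $\Dev_{\overline{\beta}}$ and $\Dev_{\pi_K}$, but $\omega$-splitting smallness of $\beta$ gives no control whatsoever over $\Dev_{\pi_K}$: the pushout of $L\overset{\beta}\to P\to U\to 0$ along $\pi_{\omega\setminus F}\sigma$ forgets everything about $\sigma$ that dies in $X$. The extreme case $\beta=0\colon L\to 0$ makes this concrete: there $U=0$, every $\rho$ splits, so $\beta$ is vacuously $\omega$-splitting small for \emph{every} $L$, while $\Dev_\beta=\Dev_{\pi_K}=\Hom(L,-)$ commutes with countable direct sums only when $L$ is small. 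Hence no reduction can rescue the converse for arbitrary $\beta$; the statement itself requires $\beta$ monic (or at least $\Ker\beta$ contained in the relevant $L_n$). This is not a defect relative to the paper, whose own argument silently uses monicity when it upgrades ``$\Ima(\beta')$ is a direct summand of $P/\beta(L_n)$'' to ``$\beta'$ is a splitting monomorphism''. If you assume $\beta$ monic from the outset, your two diagram chases give a complete and arguably more transparent proof; as written, the last paragraph promises a repair that cannot be carried out.
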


\begin{proof} ($\Rightarrow$) 
We consider a countable chain of
subobjects $$(D): L_0\hookrightarrow L_1\hookrightarrow L_2\hookrightarrow\dots$$ such that $L$ is a direct union of $(D)$.

For each $i$ denote by $\iota_i:L_i\to L$ the canonical monomorphism and put $A_i=\Coker(\iota_i)\cong L/L_i$.
Suppose that  $\sigma:L\to \bigoplus_iA_i$ is the morphism defined by direct sum of the 
canonical projections.
By the hypothesis, there exists a finite subset $F$ such that the homomorphism $\rho$ in the pushout diagram $\eqref{D3}$ splits. 
Let $n\notin F$. Clearly, the homomorphism $\overline{\rho}$ in the pushout diagram
\begin{equation}
\begin{CD}
\bigoplus_{i\in \kappa\setminus F}A_i@>>> X@>{\rho}>> U @>>> 0 \\
@VVV @VVV@|@.\\
A_n@>>> Y@>{\overline{\rho}}>> U @>>> 0
\end{CD}
\tag{D4} \label{D4}
\end{equation}
splits. Since $\pi_n$ is an epimorphism and the composition of the pushout diagrams $\eqref{D3}$ and $\eqref{D4}$ is so, the diagram 
\[
\begin{CD}
L@>{\beta}>>P@>>> U @>>> 0 \\
@V{\pi_n}VV @V{\tau}VV@|@.\\
A_n@>>> Y@>{\overline{\rho}}>> U @>>> 0
\end{CD}
\]
commutes and $\tau$ is an epimorphism. As $\overline{\rho}$ splits, it remains to observe that 
{$\Ker\tau=\beta(L_n)$.}

($\Leftarrow$) Let $\sigma\in\Hom(L,\bigoplus_{i<\omega} A_i)$ and denote by $\pi_{\ge n}$ the canonical epimorphisms 
$\bigoplus_{i} A_i\to \bigoplus_{i\ge n} A_i$.  Obviously, 
the family $L_n=\Coker(\pi_{\ge n}\alpha)$ with canonical monomorphisms forms an increasing chain 
such that $L$ is its direct union.

By the hypothesis there exists $n$ such that $\beta(L)/\beta(L_n)$ is a direct summand of $P/\beta(L_n)$, hence
it remains to put $F=\{1,\dots,n\}$ and to apply Lemma~\ref{splitting} on the pushout diagram \eqref{D3}.
\end{proof}

Now we will apply the above results in order to see when $\Ext_\CC^1(M,-)$
commutes with respect to direct sums.

\begin{lemma}\label{L2} Let $\kappa$ be a cardinal, and consider an exact sequence $L \overset{\beta}\to P\to M\to 0$.
Then $\beta$ is  $\kappa$-splitting small if and only if $\Dev_\beta$ commutes with respect to direct sums of $\kappa$ objects.
\end{lemma}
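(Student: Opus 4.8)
The plan is to read both sides of the equivalence through the factorization criterion of Theorem~\ref{oplus} and to reduce the whole statement to a single dictionary between pushouts and factorizations. By Theorem~\ref{oplus}, $\Dev_\beta$ commutes with respect to the direct sum of $(A_i)_{i<\kappa}$ if and only if for every $\sigma\colon L\to\bigoplus_{i<\kappa}A_i$ there is a finite $F\subset\kappa$ and a map $g\colon P\to\bigoplus_{i\in\kappa\setminus F}A_i$ with $\pi_{\kappa\setminus F}\sigma=g\beta$; equivalently $[\pi_{\kappa\setminus F}\sigma]=0$ in $\Dev_\beta(\bigoplus_{i\in\kappa\setminus F}A_i)$, since $\Dev_\beta(Y)=\Hom(L,Y)/\{h\beta: h\in\Hom(P,Y)\}$. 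As both this criterion and the definition of $\kappa$-splitting small have the shape ``for every $\sigma$ there is a finite $F$ such that $(\dots)$'', it suffices to prove, for fixed $\psi:=\pi_{\kappa\setminus F}\sigma\colon L\to A:=\bigoplus_{i\in\kappa\setminus F}A_i$, that the cokernel map $\rho$ of the pushout of $\beta$ along $\psi$ splits exactly when $\psi$ factors through $\beta$.

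First I would record the pushout bookkeeping. Writing $X=(P\oplus A)/N$ with $N=\{(\beta l,-\psi l):l\in L\}$, the structural maps are $\overline{\beta}\colon P\to X$, $p\mapsto[(p,0)]$, and $j\colon A\to X$, $a\mapsto[(0,a)]$, while $\rho\colon X\to U=\Coker\beta$ is $[(p,a)]\mapsto q(p)$, where $q\colon P\to U$ is the canonical epimorphism. A direct check gives $\Ker\rho=\im j$ and the pushout relation $\overline{\beta}\beta=j\psi$. For the easy implication, suppose $\psi=g\beta$. The automorphism $(p,a)\mapsto(p,a+gp)$ of $P\oplus A$ carries $N$ onto $\beta(L)\oplus 0$ and so induces an isomorphism $X\cong U\oplus A$ under which $\rho$ becomes the projection onto $U$; equivalently, the map $p\mapsto[(p,-gp)]$ kills $\beta(L)$ and hence induces a section $s\colon U\to X$ of $\rho$. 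This direction uses no hypothesis on $\beta$, so it already establishes ``$\Dev_\beta$ commutes $\Rightarrow$ $\beta$ is $\kappa$-splitting small''.

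The converse is the heart of the matter. Assuming $\rho$ splits with section $s$, set $t:=\overline{\beta}-s q\colon P\to X$. Then $\rho t=0$, so $t$ factors through $\Ker\rho=\im j$, and from $q\beta=0$ together with $\overline{\beta}\beta=j\psi$ one gets $t\beta=j\psi$. When $\beta$ is a \emph{monomorphism}, $\Ker j=\psi(\Ker\beta)=0$, so $j$ is a monomorphism; hence $t=jg$ for a unique $g\colon P\to A$, and $j\psi=t\beta=jg\beta$ forces $\psi=g\beta$, which is the required factorization (with the same $F$). This completes the dictionary, and hence the lemma, in the monomorphic case --- which is exactly the case relevant to the $\Ext^1$-applications, where $\beta$ is the inclusion of a first syzygy.

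I expect the main obstacle to be precisely the passage beyond monomorphic $\beta$. Once $\Ker\beta\neq 0$ the map $j$ acquires the kernel $\psi(\Ker\beta)$, the lift of $t$ through $j$ is no longer available on the nose, and the argument above only recovers that $\psi$ factors through $\beta$ \emph{modulo} $\psi(\Ker\beta)$; this gap cannot in general be absorbed by enlarging $F$, since $\psi(\Ker\beta)$ need not be supported on finitely many coordinates. To handle the general $\beta$ I would split off the kernel via the canonical factorization $\beta=\overline{\beta}\,\pi_K$ of Theorem~\ref{theorem-dev-restriction}: because direct sums are exact and, as observed at the beginning of Section~\ref{sect-dev-direct-sums}, every comparison map is a monomorphism for direct sums, the exact sequence $0\to\Dev_{\overline{\beta}}\to\Dev_\beta\to\Dev_{\pi_K}\to 0$ shows that $\Dev_\beta$ commutes with a given direct sum exactly when $\Dev_{\overline{\beta}}$ and $\Dev_{\pi_K}$ do. The monomorphic computation above applies verbatim to $\overline{\beta}$, and the delicate point --- which I expect to be the real work --- is to match the splitting condition for $\beta$ with the corresponding conditions for $\overline{\beta}$ and $\pi_K$, so that the $\kappa$-splitting small property is correctly read as the splitting of the full pushout short exact sequence rather than of $\rho$ alone.
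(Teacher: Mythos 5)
Your argument is, at bottom, the paper's own: both routes read commutation through Lemma~\ref{L0} (equivalently Theorem~\ref{oplus}) and then translate ``$\Pi^\kappa_{\kappa\setminus F}$ kills the class of $\sigma$'' into the splitting of the pushed-out presentation of $M$. What you add is the explicit pushout bookkeeping ($X=(P\oplus A)/N$, $\Ker\rho=\im j$, the section $p\mapsto[(p,-gp)]$, the lift $t=jg$), which the paper leaves implicit, and this correctly and completely proves the equivalence whenever $\beta$ is a monomorphism. Note also that your ``easy'' direction (factorization implies splitting, no hypothesis on $\beta$) is exactly the direction the paper needs for ``$\Dev_\beta$ commutes $\Rightarrow$ $\beta$ is $\kappa$-splitting small'', so that implication is settled in full generality.

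The obstacle you flag for non-monomorphic $\beta$ is genuine, but you should realize it is a defect of the statement rather than of your argument: the paper's proof of ($\Rightarrow$) silently asserts that the splitting of $\rho$ forces $\Pi^\kappa_{\kappa\setminus F}(E)=0$, which is precisely the implication that breaks when $\Ker\beta\neq 0$. Indeed the lemma as literally stated (arbitrary right-exact $L\to P\to M\to 0$) fails: take $\CC=Ab$, $P=0$ and $L=\Z^{(\omega)}$, so that $U=\Coker\beta=0$; then every $\rho:X\to 0$ splits and $\beta$ is $\kappa$-splitting small for all $\kappa$, while $\Dev_\beta\cong\Hom(L,-)$ (Example~\ref{dev}(2)) does not commute with countable direct sums because $\Z^{(\omega)}$ is not small. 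This also shows that the repair you sketch via the decomposition $0\to\Dev_{\overline{\beta}}\to\Dev_\beta\to\Dev_{\pi_K}\to 0$ cannot succeed: in this example $\overline{\beta}$ is harmless and the failure sits entirely in $\Dev_{\pi_K}$, which the $\kappa$-splitting-small condition simply does not see. The correct reading is that the lemma requires $\beta$ to be a monomorphism, which is how it is used everywhere downstream (Theorem~\ref{thm2} and Corollary~\ref{thm-ext-sums} assume $0\to L\to P$); restricted to that case, your proof is complete and matches the intended content.
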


\begin{proof} ($\Rightarrow$) Let $A_i,\ i\leq \kappa$ be a family of objects.
 If $\sigma\in\Hom_\CC(L,\bigoplus_{i<\kappa} A_i)$ then we consider the pushout diagram
\begin{equation}
\begin{CD}
L@>{\beta}>> P@>{\alpha}>> M@>>>0 \\
@V{\sigma}VV @V{\rho}VV @|@.@.\\
\bigoplus_{i<\kappa} A_i@>{\nu}>> X @>{\pi}>> M@>>>0.
\end{CD}\tag{D5}\label{D5}
\end{equation}
As $\beta$ is  $\kappa$-splitting small, there exists a finite set $F\subset \kappa$ such that the second row of the pushout diagram
\begin{equation}
\begin{CD}
L@>{\beta}>> P@>{\alpha}>> M@>>>0 \\
@V{\pi_{\kappa\setminus F}\sigma}VV @VVV @|@.@.\\
\bigoplus_{i\in\kappa\setminus F} A_i@>>> Y @>>> M@>>>0
\end{CD}\tag{D6}\label{D7}\ \ .
\end{equation}
splits. Thus $\Pi^\kappa_{\kappa\setminus F}(E)=0$. Now the assertion follows from Lemma~\ref{L0}.

($\Leftarrow$) Fix $\sigma:L\to \bigoplus_{i<\kappa}A_i$. Then there exist an object $X$ and homomorphisms $\rho$ and $\nu$ such that \eqref{D5} forms 
a pushout diagram. Since $\Dev_\beta$ commutes with respect to the direct sums of family $(A_i, i<\kappa)$, 
 Lemma \ref{L0} imply that there exists a finite subset $F\subset\kappa$ such that
the second row of the pushout diagram 
\[
\begin{CD}
0@>>>\bigoplus_{i<\kappa} A_i@>{\nu}>> X @>{\pi}>> M@>>>0 \\
@.@V{\pi_{\kappa\setminus F}}VV @VVV @|@.@.\\
0@>>>\bigoplus_{i\in\kappa\setminus F} A_i@>>> Y @>>> M@>>>0
\end{CD}\label{D6}
\]
splits. Thus we get pushout diagram \eqref{D7} where the second row splits, so $\beta$ is  $\kappa$-splitting small.
\end{proof}

\begin{theorem}\label{thm2}  Let $\kappa$ be a cardinal less than the first $\omega$-measurable cardinal. The 
following are equivalent for a homomorphism $0\to L \overset{\beta}\to P$ in $\CC$:
\begin{enumerate}
\item The functor $\Dev_\beta$ commutes with respect to direct sums of $\kappa$ objects,
\item $\beta$ is  $\kappa$-splitting small,
\item $\beta$ is  $\omega$-splitting small.
\end{enumerate}
\end{theorem}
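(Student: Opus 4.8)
The plan is to derive all three equivalences from results already established, arranged as a short cycle, with statement (3) serving as the pivot through its reformulation in Proposition~\ref{omega}: $\beta$ is $\omega$-splitting small precisely when $\Dev_\beta$ commutes with respect to countable direct sums. With this translation in place, all three conditions become assertions about commuting of $\Dev_\beta$ with direct sums of varying sizes, and the theorem reduces to transferring the commuting property between the countable regime and the $\kappa$-indexed regime. So the real content is not new; it is the bookkeeping that assembles Lemma~\ref{L2}, Proposition~\ref{omega} and Proposition~\ref{measurable} into the stated triple equivalence.

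First I would record the equivalence (1)$\Leftrightarrow$(2), which is nothing more than Lemma~\ref{L2} read for the cardinal $\kappa$: it states exactly that $\beta$ is $\kappa$-splitting small if and only if $\Dev_\beta$ commutes with respect to direct sums of $\kappa$ objects. For the implication (1)$\Rightarrow$(3) I would argue by padding: any countable family $(A_i)_{i<\omega}$ is a $\kappa$-indexed family once one adjoins zero objects at the indices in $\kappa\setminus\omega$ (here $\omega\le\kappa$, which is the case of interest). Since this alters neither $\bigoplus_i A_i$ nor the structural monomorphisms $\nu_i$, commuting of $\Dev_\beta$ with direct sums of $\kappa$ objects entails commuting with every countable direct sum, whence $\beta$ is $\omega$-splitting small by Proposition~\ref{omega}. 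This direction is entirely routine.

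The substantive direction is (3)$\Rightarrow$(1). Here Proposition~\ref{omega} first converts (3) into the statement that $\Dev_\beta$ commutes with countable direct sums; then Proposition~\ref{measurable}, whose hypothesis is precisely that $\kappa$ lies below the first $\omega$-measurable cardinal, promotes this to commuting with direct sums of $\kappa$ objects, i.e. (1). The main obstacle—the set-theoretic combinatorial argument separating the countable and the $\kappa$-indexed cases—has already been isolated inside Proposition~\ref{measurable}, so nothing further is needed beyond invoking it. The only points demanding care in the final write-up are the padding observation used for (1)$\Rightarrow$(3) and making sure the cardinal hypothesis is cited at exactly the step where Proposition~\ref{measurable} consumes it, since that is the sole place where it is required.
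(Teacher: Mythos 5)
Your proposal is correct and follows essentially the same route as the paper: the paper also obtains (1)$\Leftrightarrow$(2) from Lemma~\ref{L2}, disposes of the downward implication to (3) by the same padding-with-zeros observation (stated there as the trivial implication (2)$\Rightarrow$(3) at the level of the definition rather than your (1)$\Rightarrow$(3) at the level of the functor), and proves (3)$\Rightarrow$(1) by combining the countable case of Lemma~\ref{L2} (which is exactly Proposition~\ref{omega}) with Proposition~\ref{measurable}. The only difference is this cosmetic reshuffling of which implication carries the padding step.
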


\begin{proof} $(1)\Leftrightarrow(2)$ is the assertion of Lemma~\ref{L2}, $(2)\Rightarrow(3)$ is trivial and $(3)\Rightarrow(1)$ follows from Proposition~\ref{measurable}
and Lemma~\ref{L2}.
\end{proof}

We can apply the last assertion to see when an $\Ext^1$-covariant functor preserves direct sums.

\begin{corollary}\label{thm-ext-sums} 
Suppose that $\CC$ has projective strong generator which is a direct sum of finitely 
presented objects. Let $\kappa$ be a cardinal less than the first $\omega$-measurable cardinal. The 
following are equivalent for a projective presentation $0\to L \overset{\beta}\to P\to M\to 0$ of $M\in\CC$:
\begin{enumerate}
\item The functor $\Ext^1_\CC(M,-)$ commutes with respect to direct sums of $\kappa$ objects,
\item $\beta$ is  $\kappa$-splitting small,
\item $\beta$ is  $\omega$-splitting small.
\end{enumerate}
\end{corollary}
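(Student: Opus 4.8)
The plan is to reduce the statement to Theorem~\ref{thm2} by identifying the defect functor $\Dev_\beta$ with the Ext-functor $\Ext^1_\CC(M,-)$. Since $0\to L\overset{\beta}\to P\to M\to 0$ is a projective presentation, the object $P$ is projective and $\beta$ is a monomorphism with $\Coker(\beta)\cong M$; this is exactly the situation of Example~\ref{dev}(1), so $\Dev_\beta$ is canonically (naturally) equivalent to $\Ext^1_\CC(M,-)$. First I would record why this identification is available: the hypothesis that $\CC$ carries a projective strong generator which is a direct sum of finitely presented objects guarantees that $\CC$ has enough projectives, so that $\Ext^1_\CC(M,-)$ is a genuine right derived functor and can be computed from the given presentation. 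Concretely, applying $\Hom_\CC(-,X)$ to the presentation and using $\Ext^1_\CC(P,X)=0$ yields a natural isomorphism $\Ext^1_\CC(M,X)\cong\Coker(\Hom_\CC(\beta,X))=\Dev_\beta(X)$.

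Next I would verify that this natural isomorphism is compatible with the canonical comparison homomorphisms for direct sums. For a family $\frakF=(M_i)_{i\in I}$, the map $\Phi^\beta_\frakF\colon\bigoplus_i\Dev_\beta(M_i)\to\Dev_\beta(\bigoplus_i M_i)$ and the corresponding comparison map for $\Ext^1_\CC(M,-)$ fit into a commutative square whose vertical arrows are the above natural isomorphism evaluated at $\bigoplus_i M_i$ and, summand-wise, at each $M_i$. This square commutes by naturality, since both comparison maps are induced by the structural inclusions $M_i\to\bigoplus_i M_i$. As the vertical arrows are isomorphisms, $\Phi^\beta_\frakF$ is an isomorphism if and only if the $\Ext^1$-comparison map is, so $\Ext^1_\CC(M,-)$ commutes with respect to direct sums of $\kappa$ objects precisely when $\Dev_\beta$ does. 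This matches statement~(1) of the corollary with statement~(1) of Theorem~\ref{thm2}.

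Finally I would invoke Theorem~\ref{thm2} directly. Conditions~(2) and~(3) of the corollary are verbatim conditions~(2) and~(3) of Theorem~\ref{thm2} for the same monomorphism $\beta$, and we have just matched the two versions of~(1); since $\kappa$ is below the first $\omega$-measurable cardinal, Theorem~\ref{thm2} gives $(1)\Leftrightarrow(2)\Leftrightarrow(3)$ and the corollary follows.

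I do not expect a genuine obstacle here, as this is a corollary: the only point requiring care is the compatibility of the natural equivalence $\Dev_\beta\cong\Ext^1_\CC(M,-)$ with the direct-sum comparison maps, together with the observation that the projective strong generator hypothesis is used solely to ensure enough projectives, so that the Ext-functor coincides with the derived functor computed from the given presentation.
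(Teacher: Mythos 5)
Your proposal is correct and is exactly the argument the paper intends: the corollary is stated as an immediate consequence of Theorem~\ref{thm2} via the identification $\Dev_\beta\cong\Ext^1_\CC(M,-)$ from Example~\ref{dev}(1), which applies because $P$ is projective and $\beta$ is a monomorphism with cokernel $M$. Your additional check that the natural equivalence is compatible with the direct-sum comparison maps is a reasonable piece of diligence that the paper leaves implicit.
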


\begin{corollary}
Let $(V=L)$, and suppose  that $\CC$ has projective strong generator which is a direct sum of finitely 
presented objects. If $M\in\CC$ then $\Ext^1(M,-)$ commutes with respect to all direct sums  if and only if 
there exists a projective presentation $0\to L \overset{\beta}\to P\to M\to 0$ for $M$ such that $\beta$ is  
$\omega$-splitting small.

In these conditions for all projective presentations $0\to L' \overset{\beta'}\to P'\to M\to 0$ and for 
all cardinals $\kappa$ the homomorphism
$\beta'$ is $\kappa$-splitting small.
\end{corollary}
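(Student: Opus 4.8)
The plan is to deduce the whole statement from Corollary \ref{thm-ext-sums} together with a single set-theoretic observation, namely that under $(V=L)$ there are no $\omega$-measurable cardinals. Indeed, an $\omega$-measurable cardinal in the sense above carries a non-trivial countably additive $\{0,1\}$-valued measure vanishing on singletons, and by a theorem of Ulam the least such cardinal is measurable; by Scott's theorem the existence of a measurable cardinal contradicts $V=L$. Hence the clause ``$\kappa$ less than the first $\omega$-measurable cardinal'' is vacuously satisfied by \emph{every} cardinal $\kappa$, so Corollary \ref{thm-ext-sums} becomes available with no cardinality restriction at all. This is the only genuinely non-formal ingredient; everything else is a transfer argument through the canonical equivalence of functors.

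Next I would record that $M$ admits a projective presentation. Since $\CC$ has a projective strong generator, there are enough projectives, so one may choose an exact sequence $0\to L\overset{\beta}\to P\to M\to 0$ with $P$ projective and $\beta$ a monomorphism; by Example \ref{dev}(1) there is then a natural equivalence $\Dev_\beta\cong\Ext^1_\CC(M,-)$. This equivalence is what lets conditions on $\beta$ and conditions on $\Ext^1_\CC(M,-)$ be interchanged freely, and it holds for \emph{every} such presentation, not just a preferred one.

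For the equivalence itself, the backward implication is immediate: if some presentation has $\beta$ that is $\omega$-splitting small, then applying the implication $(3)\Rightarrow(1)$ of Corollary \ref{thm-ext-sums} for each cardinal $\kappa$ (all of which are now admissible) shows that $\Ext^1_\CC(M,-)$ commutes with respect to direct sums of $\kappa$ objects for every $\kappa$, hence with respect to all direct sums. For the forward implication I would fix any projective presentation; if $\Ext^1_\CC(M,-)$ commutes with all direct sums then in particular it commutes with countable direct sums, and the implication $(1)\Rightarrow(3)$ of Corollary \ref{thm-ext-sums} with $\kappa=\omega$ yields that $\beta$ is $\omega$-splitting small, which exhibits the required presentation.

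Finally, for the concluding assertion, assume the equivalent conditions hold, so $\Ext^1_\CC(M,-)$ commutes with all direct sums. Given an arbitrary projective presentation $0\to L'\overset{\beta'}\to P'\to M\to 0$ we again have $\Dev_{\beta'}\cong\Ext^1_\CC(M,-)$. Fixing any cardinal $\kappa$, the functor commutes with direct sums of $\kappa$ objects, so the implication $(1)\Rightarrow(2)$ of Corollary \ref{thm-ext-sums} applied to this presentation gives that $\beta'$ is $\kappa$-splitting small. As $\kappa$ and the presentation were arbitrary, this completes the argument. The one point to handle with care is the set-theoretic reduction in the first paragraph; once the cardinality restriction is lifted, the remaining steps are formal consequences of Corollary \ref{thm-ext-sums} and the presentation-independence of $\Dev_\bullet\cong\Ext^1_\CC(M,-)$.
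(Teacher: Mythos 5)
Your argument is correct and is exactly the derivation the paper intends: the corollary is stated without proof as an immediate consequence of Corollary \ref{thm-ext-sums}, once one notes that under $(V=L)$ there are no $\omega$-measurable cardinals, so the cardinality restriction there is vacuous. Your use of Example \ref{dev}(1) to make the identification $\Dev_{\beta}\cong\Ext^1_\CC(M,-)$ presentation-independent, and the three applications of the equivalences in Corollary \ref{thm-ext-sums}, match the paper's route.
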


We close this section with an application of Proposition \ref{basic-def} to the study of the covariant 
$\Ext^1$-functor.

\begin{lemma}\label{small}
Let $0\to L \to P\to M\to 0$ be an exact sequence such that $P$ is projective. 
If $L$ and $M$ are small objects, then $M$ and $P$ are finitely generated and $\Ext^1(M,-)$ commutes with direct sums.
\end{lemma}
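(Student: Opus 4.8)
The plan is to recognise $\Ext^1_\CC(M,-)$ as the defect functor $\Dev_\beta$ of the monomorphism $\beta\colon L\to P$: since $P$ is projective and $M=\Coker(\beta)$, Example~\ref{dev}(1) gives a natural isomorphism $\Dev_\beta\cong\Ext^1_\CC(M,-)$, and projectivity also yields $\Ext^1_\CC(P,-)=0$. The crux will be to deduce that $P$ is small from the smallness of $L$ and $M$; everything else then falls out of results already proved.

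I would first produce, for an arbitrary family $(X_i)_{i\in I}$, the comparison ladder attached to the four-term exact sequence of functors
\[0\to (M,-)\to (P,-)\overset{\beta^*}\to (L,-)\to \Dev_\beta\to 0\]
(this is the cokernel description of $\Dev_\beta$ together with $\Ext^1_\CC(P,-)=0$, i.e. the direct-sum analogue of diagram~\eqref{D10}). Evaluating at each $X_i$ and forming the coproduct keeps the top row exact, because coproducts of abelian groups are exact; the bottom row is the same four-term sequence evaluated at $\bigoplus_i X_i$. The vertical maps are the canonical morphisms $\Psi^M_\frakF,\Psi^P_\frakF,\Psi^L_\frakF,\Phi^\beta_\frakF$. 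By hypothesis $\Psi^M_\frakF$ and $\Psi^L_\frakF$ are isomorphisms, while $\Phi^\beta_\frakF$ is a monomorphism because $\Dev_\beta$ is additive. The epimorphism form of the four lemma then shows $\Psi^P_\frakF$ is an epimorphism, and since it is always a monomorphism it is an isomorphism; thus $P$ is small.

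Granting that $P$ is small, I would obtain finite generation as follows. As $\CC$ is locally finitely presented and $P$ is projective, $P$ is a retract of a coproduct $\bigoplus_{j}G_j$ of finitely presented generators; the corresponding section $s\colon P\to\bigoplus_j G_j$ lies in $\Hom(P,\bigoplus_j G_j)\cong\bigoplus_j\Hom(P,G_j)$ by smallness, so it factors through a finite subcoproduct $\bigoplus_{j\in F}G_j$. Composing with the restriction of the retraction exhibits $P$ as a retract of the finitely presented object $\bigoplus_{j\in F}G_j$, whence $P$ is finitely generated. Then $M=\Coker(\beta)$ is an epimorphic image of $P$, and since finitely generated objects are closed under epimorphic images, $M$ is finitely generated too.

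The commuting statement is now immediate: under the identification $\Ext^1_\CC(M,-)\cong\Dev_\beta$ with $P$ small, Proposition~\ref{basic-def}(1) says $\Dev_\beta$ commutes with direct sums precisely when $L$ is small, which is a hypothesis. The one delicate point is the passage from ``$L$ and $M$ small'' to ``$P$ small'': it is exactly the projectivity of $P$---furnishing the fourth term $\Dev_\beta$ and the vanishing $\Ext^1_\CC(P,-)=0$---that makes the four-term ladder available and hence lets the four lemma conclude, the automatic injectivity of $\Phi^\beta_\frakF$ supplying the missing hypothesis.
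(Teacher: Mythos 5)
Your proof is correct and follows essentially the same route as the paper's: establish that $P$ is small (the paper simply cites closure of small objects under extensions, whereas you unpack this via the four lemma on the comparison ladder for $0\to(M,-)\to(P,-)\to(L,-)\to\Dev_\beta\to 0$, using injectivity of $\Phi^\beta_\frakF$), deduce that $P$ and hence $M$ are finitely generated, and conclude via Proposition~\ref{basic-def}(1), which is exactly the diagram argument the paper runs by hand. The only quibble is your closing remark: the four-term ladder exists for any $\beta$ with cokernel $M$, so projectivity of $P$ is not what makes that step work --- it is needed only to identify $\Dev_\beta$ with $\Ext^1_\CC(M,-)$.
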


\begin{proof}
Since the class of small objects is closed with respect to extensions, $P$ is small, hence finitely generated.
Note that a direct sum $\bigoplus_{i<\kappa}A_i$ is precisely direct union of the diagram $\frakF=(\bigoplus_{i\in F}A_i, \nu_{FG}|\ F\subseteq G\in \kappa^{<\omega})$
where $\nu_{FG}$ are the canonical inclusions $\bigoplus_{i\in F}A_i\to \bigoplus_{i\in G}A_i$. 
As all homomorphisms $\Psi_\frakF$ from the diagram \eqref{D10} are isomorphisms, $\Phi_\frakF^M$ is isomorphism as well.
\end{proof}

Applying Proposition \ref{basic-def}(1) we obtain the following

\begin{corollary} 
Suppose  that $\CC$ has projective strong generator which is a direct sum of finitely 
presented objects.
Let $M$ be a finitely generated object, and let $0\to L \overset{\beta}\to P\to M\to 0$ be 
a projective presentation for $M$ such that $P$ is finitely generated. Then $\Ext^1_\CC(M,-)$ commutes with respect to direct sums if and only
if for  $L$ is small.
\end{corollary}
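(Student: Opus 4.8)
The plan is to reduce the statement to Proposition~\ref{basic-def}(1) through the identification of $\Ext^1_\CC(M,-)$ with a defect functor. Since $\beta$ is a monomorphism, $P$ is projective, and $M=\Coker(\beta)=P/\beta(L)$, Example~\ref{dev}(1) supplies a natural isomorphism $\Dev_\beta\cong\Ext^1_\CC(M,-)$. Hence it suffices to decide when $\Dev_\beta$ commutes with respect to direct sums, and this is exactly what Proposition~\ref{basic-def}(1) answers \emph{provided} $P$ is a small object: under that hypothesis $\Dev_\beta$ commutes with direct sums if and only if $L$ is small. So the whole argument rests on one point, which I expect to be the main obstacle: verifying that the finitely generated projective object $P$ is small.

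To prove that $P$ is small I would first exploit the standing hypothesis that $\CC$ has a projective strong generator $G$ which is a direct sum of finitely presented objects. Since $G$ generates $\CC$, the object $P$ is an epimorphic image of a direct sum of copies of $G$; as $P$ is projective this epimorphism splits, so $P$ is a direct summand of a direct sum $\bigoplus_{k\in K}F_k$ of finitely presented objects. Next I would use that $P$ is finitely generated: by the characterization recalled in the introduction, $\Hom_\CC(P,-)$ commutes with respect to direct unions, and $\bigoplus_{k\in K}F_k$ is the direct union of its finite subsums $\bigoplus_{k\in K_0}F_k$ along the monomorphic inclusions. Consequently the split monomorphism $P\to\bigoplus_{k\in K}F_k$ factors through some finite subsum $\bigoplus_{k\in K_0}F_k$; composing this factorization with the retraction $\bigoplus_{k\in K}F_k\to P$ exhibits $P$ as a direct summand of the finitely presented object $\bigoplus_{k\in K_0}F_k$.

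Since idempotents split in the Grothendieck category $\CC$ and finitely presented objects are closed under direct summands, it follows that $P$ is finitely presented, hence small. With $P$ small established, I would finish by applying Proposition~\ref{basic-def}(1) to $\Dev_\beta$ and transporting the conclusion across the isomorphism $\Dev_\beta\cong\Ext^1_\CC(M,-)$ of the first paragraph, obtaining that $\Ext^1_\CC(M,-)$ commutes with respect to direct sums if and only if $L$ is small. The one delicate step is the finite-subsum factorization, where it is essential to invoke precisely the defining property of a finitely generated object, namely that $\Hom_\CC(P,-)$ commutes with direct unions; everything else is a routine assembly of the cited results.
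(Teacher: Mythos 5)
Your proposal is correct and follows essentially the same route as the paper, which simply invokes Proposition~\ref{basic-def}(1) after identifying $\Dev_\beta$ with $\Ext^1_\CC(M,-)$ via Example~\ref{dev}(1). The extra work you do to verify that $P$ is small is sound (and the intermediate fact that finitely generated projectives are finitely presented is explicitly recorded in the paper just before Corollary~\ref{basic}); one could also get smallness of $P$ more directly from finite generation, since a direct sum is the direct union of its finite subsums.
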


\section{The covariant $\Ext^1$-functor and direct unions}\label{Ext-unions}

\textsl{In this section $\CC$ will be an abelian category with a projective strong generator which is a direct sum of finitely 
presented objects.} 

Let us fix an object $M$ in $\CC$. We will apply the previous results to 
study the commuting properties for the covariant functor 
$\Ext_\CC^1(M,-)$. In order to do this, we fix a projective presentation $$0\to L\overset{\beta}\to P\to M\to 0,$$ 
and we can apply the
previous results for the functor $\Dev_\beta$.

In order to simplify our presentation we will say, as in
\cite{Br-ext-lim}, that the object $M$ is an
\textsl{$\mathrm{fg}$-$\Omega^1$-object} (respectively
\textsl{$\mathrm{fp}$-$\Omega^1$-object}) if there is a projective
resolution
$$(\boldP):\ \dots\to P_2\to P_1\overset{\alpha_1}\to P_0\to M\to 0$$ such that
$\Omega^1(\boldP)=\Ima(\alpha_1)$ is finitely generated
(respectively, finitely presented), i.e. there is a projective
resolution for $M$ such that the first syzygy associated to this
resolution is finitely generated (finitely presented). The object $M$ is an \textsl{$FP_n$-object}
if it has a projective resolution such that $P_i$ are finitely presented for all $i\in\{0,\dots,n\}$.

It is proved in \cite[Lemma
3.1.6]{GT06} that if $M$ is an $FP_2$ object then $\Ext^1_\CC(M,-)$ commutes with respect to direct limits.
Using Proposition \ref{basic-def} and Example~\ref{dev}(1) for the kernel of a projective presentation $P\to M\to 0$,
it is easy to see that for the finitely generated objects this hypothesis is sharp. 
{We recall that in our hypothesis every finitely generated projective object is finitely presented.}

\begin{corollary}\label{basic}
Let $M$ be a finitely generated object. \begin{enumerate} \item
$\Ext^1_\CC(M,-)$ commutes with respect to direct unions if and only
if $M$ is finitely presented.

\item $\Ext^1_\CC(M,-)$ commutes with respect to direct limits if and
only if $M$ is an $FP_2$-object.
\end{enumerate}
\end{corollary}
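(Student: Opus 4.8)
The plan is to realize $\Ext^1_\CC(M,-)$ as a defect functor and then read off its commuting behaviour from Proposition~\ref{basic-def}. Since $M$ is finitely generated and $\CC$ has a projective strong generator which is a direct sum of finitely presented objects, $M$ is an epimorphic image of a finitely presented projective object $P$ (a finite partial sum of the generator); fix such a projective presentation $0\to L\overset{\beta}\to P\to M\to 0$. Because $P$ is projective and $\beta$ is a monomorphism, Example~\ref{dev}(1) yields a natural equivalence $\Dev_\beta\cong \Ext^1_\CC(M,-)$, so every commuting property of $\Ext^1_\CC(M,-)$ may be tested on $\Dev_\beta$. Note that $P$ finitely presented projective is in particular finitely generated, so both Proposition~\ref{basic-def}(2) and (3) will apply to this $\beta$.

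For part (1) I would apply Proposition~\ref{basic-def}(2): since $P$ is finitely generated, $\Dev_\beta$ commutes with respect to direct unions if and only if $L$ is finitely generated. It then remains to invoke the earlier lemma characterizing finite presentation, which asserts that, for a presentation $0\to L\to P\to M\to 0$ with $P$ finitely presented, $M$ is finitely presented precisely when $L$ is finitely generated. Chaining these two equivalences gives that $\Ext^1_\CC(M,-)$ commutes with direct unions if and only if $M$ is finitely presented.

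For part (2) I would instead apply Proposition~\ref{basic-def}(3): since $P$ is finitely presented, $\Dev_\beta$ commutes with respect to direct limits if and only if $L$ is finitely presented. The remaining task, and the only non-formal one, is to identify the condition ``$L$ is finitely presented'' with ``$M$ is an $FP_2$-object''. One direction is immediate: if $L$ is finitely presented, choose a presentation $Q_1\to Q_0\to L\to 0$ by finitely presented projectives and splice it onto $0\to L\to P\to M\to 0$ (via $Q_0\twoheadrightarrow L\hookrightarrow P$) to obtain a projective resolution $Q_1\to Q_0\to P\to M\to 0$ whose first three terms are finitely presented, so $M$ is $FP_2$. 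For the converse I would use Schanuel's lemma: an $FP_2$-resolution yields another presentation $0\to L'\to P'\to M\to 0$ with $P'$ finitely presented projective and first syzygy $L'$ finitely presented (the next step of the resolution exhibits $L'$ as the cokernel in a sequence to which the earlier lemma applies), and Schanuel's lemma gives $L\oplus P'\cong L'\oplus P$; since the right-hand side is finitely presented and finitely presented objects are closed under direct summands, $L$ is finitely presented.

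The main obstacle is precisely this last equivalence in part (2): translating the existential definition of an $FP_2$-object into a statement about the specific syzygy $L$ of the chosen presentation. The essential content is the independence of the first syzygy, up to projective summands, from the chosen finitely presented projective cover, which I expect to handle via Schanuel's lemma together with the stability of finite presentation under retracts. Once this is settled, both parts are purely formal consequences of Example~\ref{dev}(1), Proposition~\ref{basic-def}, and the earlier lemma on finite generation and finite presentation.
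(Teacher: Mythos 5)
Your proof is correct and follows essentially the same route as the paper: fix a projective presentation $0\to L\overset{\beta}\to P\to M\to 0$ with $P$ finitely presented projective, identify $\Ext^1_\CC(M,-)$ with $\Dev_\beta$ via Example~\ref{dev}(1), and read off both statements from Proposition~\ref{basic-def}(2),(3) together with the lemma relating finite presentation of $M$ to finite generation of $L$. The only difference is cosmetic: where the paper cites \cite[Lemma 3.1.6]{GT06} for the implication ``$M$ is $FP_2$ $\Rightarrow$ $\Ext^1_\CC(M,-)$ commutes with direct limits,'' you give a self-contained argument identifying ``$L$ finitely presented'' with ``$M$ is $FP_2$'' via Schanuel's lemma and closure of finitely presented objects under retracts, which is exactly the kind of argument the paper itself uses later in Corollary~\ref{coherent-1}.
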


Inductively, using the dimension shifting formula we obtain a
version, of \cite[Theorem 2]{Brown} and \cite[Theorem A]{Strebel}:

\begin{corollary}\label{cor-br-str}
The following are equivalent for an object $M$:
\begin{enumerate}
\item $M$ has a projective resolution $$(\boldP):\ P_n\to \dots\to
P_2\to P_1\overset{\alpha_1}\to P_0\to M\to 0$$ such that $P_i$
are finitely presented for all $0\leq i\leq n$;

\item The functors $\Ext_\CC^i(M,-)$ commute with respect to direct
unions for all $0\leq i\leq n$.
\end{enumerate}
\end{corollary}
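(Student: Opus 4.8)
The plan is to argue by induction on $n$, reducing the statement for $M$ to the statement for a first syzygy $\Omega$ via dimension shifting. Fix a finitely generated projective presentation $0\to\Omega\overset{\beta}\to P_0\to M\to 0$, where $P_0$ is a finitely generated projective object (hence finitely presented in our setting, by the remark that every finitely generated projective object is finitely presented) and $\Omega=\Ker(P_0\to M)$; such a presentation exists precisely when $M$ is finitely generated. Since $P_0$ is projective, the long exact sequence for $\Ext_\CC^\bullet(-,X)$ yields natural isomorphisms $\Ext_\CC^{i+1}(M,-)\cong\Ext_\CC^{i}(\Omega,-)$ for all $i\ge 1$, together with $\Ext_\CC^1(M,-)\cong\Dev_\beta$ by Example~\ref{dev}(1). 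Because commuting with respect to direct unions is preserved by natural isomorphism of functors, these identifications let me transport the $\Ext$-conditions back and forth between $M$ and $\Omega$.

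The base case $n=0$ is immediate: condition (1) says that $M$ is an epimorphic image of a finitely presented projective object, i.e.\ $M$ is finitely generated, while condition (2) says that $\Ext_\CC^0(M,-)=\Hom_\CC(M,-)$ commutes with respect to direct unions, which by the characterization of finitely generated objects recalled in the introduction is again equivalent to $M$ being finitely generated.

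For the inductive step, assume the equivalence for $n-1$ and all objects. For $(2)\Rightarrow(1)$, commuting of $\Hom_\CC(M,-)$ first gives that $M$ is finitely generated, so I may form the presentation above with $P_0$ finitely generated; commuting of $\Ext_\CC^1(M,-)\cong\Dev_\beta$ together with Proposition~\ref{basic-def}(2) (using that $P_0$ is finitely generated) then forces $\Omega$ to be finitely generated. The dimension shifting isomorphisms turn commuting of $\Ext_\CC^{i}(M,-)$ for $2\le i\le n$ into commuting of $\Ext_\CC^{j}(\Omega,-)$ for $1\le j\le n-1$, and $\Hom_\CC(\Omega,-)$ commutes since $\Omega$ is finitely generated; thus $\Omega$ satisfies condition (2) for $n-1$. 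By the inductive hypothesis $\Omega$ admits a projective resolution $Q_{n-1}\to\cdots\to Q_0\to\Omega\to 0$ with all $Q_j$ finitely presented, and splicing it with $0\to\Omega\to P_0\to M\to 0$ produces a resolution of $M$ with finitely presented terms in degrees $0,\dots,n$. The converse $(1)\Rightarrow(2)$ runs the same computation backwards: from a resolution of $M$ with finitely presented terms in degrees $0,\dots,n$ I read off that $M$ is finitely generated (whence $\Hom_\CC(M,-)$ commutes) and that the syzygy $\Omega$ carries a resolution with finitely presented terms in degrees $0,\dots,n-1$; the inductive hypothesis yields commuting of $\Ext_\CC^j(\Omega,-)$ for $0\le j\le n-1$, and dimension shifting together with Proposition~\ref{basic-def}(2) and Example~\ref{dev}(1) gives commuting of $\Ext_\CC^i(M,-)$ for $0\le i\le n$.

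The only genuinely delicate point is the special role of the degree $i=1$: unlike the higher degrees, which are handled uniformly by the shifting isomorphisms $\Ext_\CC^{i+1}(M,-)\cong\Ext_\CC^i(\Omega,-)$, the first $\Ext$ must be treated through the identification $\Ext_\CC^1(M,-)\cong\Dev_\beta$ and Proposition~\ref{basic-def}(2), which is exactly where finite generation of $P_0$ is used. A minor additional point worth stating is that the $FP_{n-1}$-property of the syzygy does not depend on the chosen finitely generated projective presentation, which follows from the usual Schanuel argument; with a fixed choice of $\Omega$ in each direction this is not strictly needed, but it makes clear that the induction is well posed.
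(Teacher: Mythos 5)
Your proof is correct and follows essentially the same route the paper intends: the paper derives this corollary "inductively, using the dimension shifting formula" from Corollary~\ref{basic}, which is itself obtained from Proposition~\ref{basic-def}(2) and Example~\ref{dev}(1) exactly as in your inductive step. The base case via the characterization of finitely generated objects, the shift $\Ext^{i+1}_\CC(M,-)\cong\Ext^i_\CC(\Omega,-)$, and the special treatment of degree one through $\Dev_\beta$ all match the paper's argument.
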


\begin{remark}
Corollary \ref{basic} can be
reformulated in the following way: \textit{the functor
$\Hom_\CC(M,-)$ commutes with respect to direct limits if and only if
the functors $\Hom_\CC(M,-)$ and $\Ext_\CC^1(M,-)$ commute with
respect to direct unions}. The proof presented here uses the
existence of the strong generator $\mathcal{U}$ which is a direct sum of finitely 
presented projective objects. It is an open question if this result is
valid in more general settings, e.g. for general Grothendieck
categories without enough projectives. For instance this
equivalence is valid for the category of all Abelian $p$-groups
($p$ is a fixed prime), {which is a Grothendieck category without non-trivial projective objects,}
as a consequence of \cite[Theorem
5.4]{S11}. 
\end{remark}

In order to prove the main result of this section, 
we say that a covariant functor $F:\CC\to Ab$ is \textsl{isomorphic to a direct summand} of a functor 
$G:\CC\to Ab$ if we can find two natural transformations 
$\rho:F\to G$ and $\pi:G\to F$ such that $\pi\rho=1_F$. 

\begin{lemma}
Let $F,G:\CC\to Ab$ be additive covariant functors such that $F$ is isomorphic to a direct summand of $G$. If $\frakF=(M_i)_{i\in I}$ is a direct family such that the canonical 
homomorphism $\Phi_G: \underrightarrow{\lim}G(M_i)\to G(\underrightarrow{\lim}M_i)$ is monic (epic) then the canonical homomorphism $\Phi_F: \underrightarrow{\lim}F(M_i)\to F(\underrightarrow{\lim}M_i)$ is monic (epic). 
\end{lemma}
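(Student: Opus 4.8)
The plan is to exploit the naturality of the connecting maps $\Phi$ under natural transformations of functors, reducing the statement to a diagram chase. First I would set up the direct system $\frakF=(M_i)_{i\in I}$ with limit $\underrightarrow{\lim}M_i$ and structural maps $v_i$, and record the two natural transformations $\rho:F\to G$ and $\pi:G\to F$ with $\pi\rho=1_F$ given by hypothesis. The key observation is that because $\rho$ and $\pi$ are natural, they commute with the canonical colimit comparison maps. Concretely, evaluating at each $M_i$ gives group homomorphisms $\rho_{M_i}$ and $\pi_{M_i}$, and these assemble (by the universal property of the colimit in $Ab$) into homomorphisms $\underrightarrow{\lim}\rho:\underrightarrow{\lim}F(M_i)\to\underrightarrow{\lim}G(M_i)$ and $\underrightarrow{\lim}\pi$ in the reverse direction, while $\rho$ and $\pi$ evaluated at $\underrightarrow{\lim}M_i$ give maps on the target side.

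The heart of the argument is the commutative square relating $\Phi_F$ and $\Phi_G$ through these transformations. Naturality of $\rho$ with respect to each $v_i$ forces the square
\[
\xymatrix{
\underrightarrow{\lim}F(M_i)\ar[r]^{\Phi_F}\ar[d]_{\underrightarrow{\lim}\rho} & F(\underrightarrow{\lim}M_i)\ar[d]^{\rho_{\underrightarrow{\lim}M_i}}\\
\underrightarrow{\lim}G(M_i)\ar[r]^{\Phi_G} & G(\underrightarrow{\lim}M_i)
}
\]
to commute, and similarly the analogous square for $\pi$ commutes. I would verify commutativity by testing on a generator of the colimit, i.e. an element coming from some $F(M_k)$ via the structural map, and tracing both ways using the defining naturality squares of $\rho$ against $v_k$; this is the routine part. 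Since $\pi\rho=1_F$ as natural transformations, applying the functors involved yields $\underrightarrow{\lim}\pi\circ\underrightarrow{\lim}\rho=1$ on $\underrightarrow{\lim}F(M_i)$ and $\pi_{\underrightarrow{\lim}M_i}\circ\rho_{\underrightarrow{\lim}M_i}=1$ on $F(\underrightarrow{\lim}M_i)$, so $\Phi_F$ is a retract of $\Phi_G$ in the arrow category.

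From here the conclusion is formal. If $\Phi_G$ is monic, then from the upper square $\rho_{\underrightarrow{\lim}M_i}\Phi_F=\Phi_G(\underrightarrow{\lim}\rho)$; if $\Phi_F(x)=0$ then $\Phi_G(\underrightarrow{\lim}\rho)(x)=0$, so $(\underrightarrow{\lim}\rho)(x)=0$ by injectivity of $\Phi_G$, whence $x=(\underrightarrow{\lim}\pi)(\underrightarrow{\lim}\rho)(x)=0$, giving injectivity of $\Phi_F$. Dually, if $\Phi_G$ is epic, then for any target element $y\in F(\underrightarrow{\lim}M_i)$ I would lift $\rho_{\underrightarrow{\lim}M_i}(y)$ through $\Phi_G$ to some $z$, and check that $\Phi_F\big((\underrightarrow{\lim}\pi)(z)\big)=\pi_{\underrightarrow{\lim}M_i}\Phi_G(z)=\pi_{\underrightarrow{\lim}M_i}\rho_{\underrightarrow{\lim}M_i}(y)=y$, establishing surjectivity of $\Phi_F$.

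I expect the only genuine obstacle to be bookkeeping: making sure the induced map $\underrightarrow{\lim}\rho$ on colimits is the correct one and that the comparison squares commute on the nose rather than merely up to the structural isomorphisms. There is no deep content beyond naturality plus the splitting $\pi\rho=1_F$; the argument is purely diagrammatic and works verbatim for both the monic and epic cases, which is why I would phrase it once and dualize.
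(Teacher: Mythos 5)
Your proposal is correct and follows essentially the same route as the paper: the paper's proof writes down exactly the commutative diagram you describe, with $\Phi_F$ exhibited as a retract of $\Phi_G$ via $\underrightarrow{\lim}\rho$, $\underrightarrow{\lim}\pi$ and $\rho_{\underrightarrow{\lim}M_i}$, $\pi_{\underrightarrow{\lim}M_i}$, and declares the conclusion obvious. You merely spell out the diagram chase that the paper leaves implicit.
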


\begin{proof}
If $\rho:F\to G$ and $\pi:G\to F$ are natural transformations such that $\pi\rho=1_F$, we have the commutative diagram 
\[
\begin{CD}
0@>>>\underrightarrow{\lim} F(M_i)@>{\underrightarrow{\lim}\rho_{M_i}}>> \underrightarrow{\lim} G(M_i)@>{\underrightarrow{\lim}\pi_{M_i}}>> \underrightarrow{\lim} F(M_i)@>>>0 \\
@.@V{\Phi_F}VV @V{\Phi_G}VV @V{\Phi_F}VV@.@.\\
0@>>>F(\underrightarrow{\lim} M_i)@>{\rho_{\underrightarrow{\lim} M_i}}>> G(\underrightarrow{\lim} M_i) @>{\pi_{\underrightarrow{\lim} M_i}}>> F(\underrightarrow{\lim} M_i)@>>>0
\end{CD}
\ ,
\]
and the conclusion is now obvious.
\end{proof}

\begin{corollary}\label{sumanzi}
Let $M$ be an object such that $\Ext^1_\CC(M,-)$ commutes with
respect to a limit of a direct system $\frakF$. Then every direct summand $N$
of $M$ has the same property.
\end{corollary}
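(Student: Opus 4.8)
The plan is to exhibit $\Ext^1_\CC(N,-)$ as a direct summand of $\Ext^1_\CC(M,-)$ in the precise sense required by the preceding lemma, and then feed that into the lemma. Since $N$ is a direct summand of $M$, there are homomorphisms $j:N\to M$ and $q:M\to N$ with $qj=1_N$. First I would pass to the natural transformations these induce on the $\Ext^1$-functors. Because $\Ext^1_\CC(-,X)$ is contravariant in its first argument, $j$ and $q$ give rise to natural transformations $j^*:\Ext^1_\CC(M,-)\to\Ext^1_\CC(N,-)$ and $q^*:\Ext^1_\CC(N,-)\to\Ext^1_\CC(M,-)$, both natural in the second variable by the standard bifunctoriality of $\Ext^1_\CC$.

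The key identity is $j^*q^*=(qj)^*=1_N^*=1_{\Ext^1_\CC(N,-)}$, where I use that $\Ext^1_\CC(-,X)$ sends the composite $qj$ to $j^*q^*$. Setting $F=\Ext^1_\CC(N,-)$ and $G=\Ext^1_\CC(M,-)$, and taking the transformation $F\to G$ to be $q^*$ and the transformation $G\to F$ to be $j^*$, this identity is exactly the statement that $F$ is isomorphic to a direct summand of $G$ in the sense defined just before the preceding lemma.

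To finish, I would observe that the hypothesis that $\Ext^1_\CC(M,-)$ commutes with respect to $\frakF$ means precisely that $\Phi_G$ is an isomorphism, hence in particular both monic and epic. Applying the preceding lemma once for the monic case and once for the epic case yields that $\Phi_F$ is simultaneously monic and epic, i.e. an isomorphism; equivalently, $\Ext^1_\CC(N,-)$ commutes with respect to $\frakF$, which is the assertion.

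I expect no serious obstacle here, since the entire colimit-theoretic content has already been isolated in the preceding lemma; what remains is the purely formal claim that a splitting $qj=1_N$ in $\CC$ induces a splitting of natural transformations of the associated $\Ext^1$-functors. The only point deserving a line of justification is the naturality of $j^*$ and $q^*$ in the second variable, together with the fact that $j^*q^*$ is the identity natural transformation (and not merely an objectwise identity); both are immediate consequences of the functoriality of $\Ext^1_\CC$ in each argument.
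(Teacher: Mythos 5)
Your proposal is correct and is exactly the argument the paper intends: the corollary is stated as an immediate consequence of the preceding lemma, obtained by splitting $\Ext^1_\CC(N,-)$ off $\Ext^1_\CC(M,-)$ via $q^*$ and $j^*$ with $j^*q^*=(qj)^*=1$. Your spelling out of the contravariance and naturality details is a faithful expansion of what the paper leaves implicit.
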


Now we are ready to characterize when the covariant $\Ext^1_\CC$ functors commute with respect to direct unions.
We recall that $M$ is \textsl{$2$-almost projective} if it is a direct summand of a direct sum $P\oplus F$ with $P$ 
a projective object and $F$ a finitely presented object, \cite{Drin}. For reader's convenience we include a proof for 
the following characterization.

\begin{theorem}\label{main-th}
The following are equivalent for an object $M$ in $\CC$:
\begin{enumerate}
\item The functor $\Ext^1_\CC(M,-)$ commutes with respect to direct
unions;

\item $M$ is a direct summand of an
$\mathrm{fg}$-$\Omega^1$-object.

\item $M$ is a 2-almost projective object.
%
\end{enumerate}
\end{theorem}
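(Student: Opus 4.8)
The plan is to prove the equivalences by establishing $(3)\Rightarrow(1)\Rightarrow(2)\Rightarrow(3)$, tying together the homological machinery developed in the earlier sections (chiefly Theorem~\ref{unions-1} and Corollary~\ref{sumanzi}) with the structural description of $2$-almost projective objects. First I would dispose of $(3)\Rightarrow(1)$ using the tools already available: if $M=P\oplus F$ with $P$ projective and $F$ finitely presented, then $\Ext^1_\CC(P\oplus F,-)\cong \Ext^1_\CC(F,-)$ since projectives have vanishing $\Ext^1$; and $F$ being finitely presented (indeed an $FP_2$-object, as every finitely presented object in our category has a finitely generated first syzygy after one step) makes $\Ext^1_\CC(F,-)$ commute with direct unions by Corollary~\ref{basic}(1) or by invoking the $FP_2$ criterion. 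For a general direct summand $M$ of such a $P\oplus F$, Corollary~\ref{sumanzi} transports the commuting property down from $P\oplus F$ to $M$.

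For $(1)\Rightarrow(2)$, the idea is to feed a projective presentation $0\to L\overset{\beta}\to P\to M\to 0$ into Theorem~\ref{unions-1}. Since $P$ is projective, $\Dev_\beta\cong\Ext^1_\CC(M,-)$ by Example~\ref{dev}(1), so hypothesis $(1)$ says $\Dev_\beta$ commutes with direct unions, in particular $\Phi^\beta_\frakF$ is an epimorphism for every direct family of monomorphisms. Theorem~\ref{unions-1} then yields a homomorphism $h:L\to F$ factoring through a finitely generated object such that $(\beta,h)^t:L\to P\oplus F$ is a splitting monomorphism, or equivalently a finitely generated subobject $H\le L$ with $\overline\beta:L/H\to P/\beta(H)$ split mono (condition (6), whose lifting requirement is automatic here because $P$ is projective). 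The cokernel of $\overline\beta$ is again $M$, exhibiting $M$ as a direct summand of $P/\beta(H)$; and since $H$ is finitely generated, $\Omega^1$ of the resulting presentation $0\to L/H\to P/\beta(H)\to M\to 0$ is finitely generated, so $M$ is a direct summand of an $\mathrm{fg}$-$\Omega^1$-object.

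The step $(2)\Rightarrow(3)$ is where the real structural work lies, and I expect it to be the main obstacle. Suppose $N$ is an $\mathrm{fg}$-$\Omega^1$-object, so $N$ has a projective resolution whose first syzygy $\Omega^1=\im(\alpha_1)$ is finitely generated; writing $0\to \Omega^1\to P_0\to N\to 0$ with $P_0$ projective (a direct sum of finitely presented projectives, using our standing generator hypothesis), the point is to show $N$ is $2$-almost projective. Because $\Omega^1$ is finitely generated it is an epimorphic image of a finitely presented projective $Q$, and I would analyse the pullback/pushout comparing $P_0\to N$ with a presentation of $N$ by finitely presented projectives to split off a finitely presented piece, leaving a projective complement. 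The delicate part is controlling the finitely generated syzygy well enough to realize $N$ as a genuine direct summand of (projective)\,$\oplus$\,(finitely presented), rather than merely a quotient; this likely requires choosing the projective cover data compatibly and using that finitely generated projective objects are finitely presented in $\CC$ (as noted before Corollary~\ref{basic}). Finally, since $M$ is only a \emph{direct summand} of such an $N$, and the class of $2$-almost projective objects is by definition closed under direct summands, $M$ itself is $2$-almost projective, completing the cycle.
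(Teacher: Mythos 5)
Your steps $(3)\Rightarrow(1)$ and $(1)\Rightarrow(2)$ are correct and essentially identical to the paper's: reduce $(3)\Rightarrow(1)$ to the finitely presented summand via Corollary~\ref{sumanzi} and Corollary~\ref{basic}(1) (note, though, that your parenthetical claim that a finitely presented object is automatically $FP_2$ is false --- finite presentation only gives a finitely \emph{generated} first syzygy --- but you do not need it, since Corollary~\ref{basic}(1) already covers direct unions); and for $(1)\Rightarrow(2)$ feed a projective presentation into Theorem~\ref{unions-1}(6) and observe $\Coker(\overline\beta)\cong M$.

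The problem is $(2)\Rightarrow(3)$, which you correctly identify as the substantive step but then do not actually prove: ``analyse the pullback/pushout\ldots to split off a finitely presented piece'' and ``choosing the projective cover data compatibly'' is a plan, not an argument, and the projective-cover route is doubtful since $\CC$ need not have projective covers. The idea you are missing is an Eilenberg-swindle-type localization. Given $0\to L\overset{\beta}\to P\to N\to 0$ with $L$ finitely generated and $P$ projective, choose $U$ so that $P\oplus U=\bigoplus_{i\in I}P_i$ is a direct sum of finitely presented projective objects (possible by the standing hypothesis on the generator), and pass to $0\to L\to P\oplus U\to N\oplus U\to 0$. Since $L$ is finitely generated, $\beta(L)$ is contained in a finite subsum $\bigoplus_{i\in J}P_i$, whence
$$N\oplus U\cong\Bigl(\bigoplus_{i\in I\setminus J}P_i\Bigr)\oplus\Bigl(\bigoplus_{i\in J}P_i\Bigr)/\beta(L),$$
and the second factor is finitely presented because it is a quotient of a finitely presented object by a finitely generated subobject. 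This exhibits $N\oplus U$, hence any direct summand of $N$, as $2$-almost projective. Without this (or an equivalent) argument your cycle of implications does not close.
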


\begin{proof}
$(1)\Rightarrow(2)$ We consider a projective resolution $0\to L\overset{\beta}\to P\to M\to 0$. 
By Theorem \ref{unions-1} there exists a finitely generated object $H\leq L$ such that the induced 
homomorphism $\overline{\beta}:L/H\to P/\beta(H)$ is split mono. Since
$\Coker(\overline{\beta})\cong\Coker(\beta)\cong M$, it follows
that $M$ is isomorphic to a direct summand of the
$\mathrm{fg}$-$\Omega^1$-object $P/\beta(H)$.

(2)$\Rightarrow$(3) It is enough to assume that $M$ is an
$\mathrm{fg}$-$\Omega^1$-object. If $M$ is such an object then we
can consider the diagram \eqref{D10} with $P$ projective and $L$
finitely generated. If $U$ is an object such that $P\oplus U$ is a
direct sum of copies of some objects from $\CU$, we consider the
induced exact sequence $0\to L\overset{\beta}\to P\oplus
U\overset{\alpha\oplus 1_U}\longrightarrow M\oplus U\to 0$. Let
$P\oplus U=\oplus_{i\in I}P_i$, where all objects $P_i$ are
finitely presented and projective. Since $L$ is finitely
generated, there is a finite subset $J\subseteq I$ such that
$\beta(L)\subseteq \oplus_{i\in J}P_i$. Therefore $M\oplus U\cong
(\oplus_{i\in I\setminus J}P_i)\oplus (\oplus_{i\in
J}P_i)/\beta(L)$ is a direct sum of a projective object and a
finitely presented object.

(3)$\Rightarrow$(1) In view of Corollary \ref{sumanzi}, we can assume
that $M$ is finitely presented. Then we apply Corollary~\ref{basic}.
\end{proof}

For arbitrarily direct limits, we have not a general answer.
However, for some particular cases, including coherent categories
($\CC$ is \textsl{coherent} if every finitely generated subobject
of a projective object is finitely presented), we can apply the
previous result. In order to do this, let us state the following

\begin{proposition}
Let $M$ be an $\mathrm{fg}$-$\Omega^1$-object. The following are
equivalent:
\begin{enumerate}
\item $M$ is an $\mathrm{fp}$-$\Omega^1$-object; \item
$\Ext^1_\CC(M,-)$ commutes with respect to direct limits.
\end{enumerate}
\end{proposition}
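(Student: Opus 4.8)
The plan is to fix the presentation supplied by the hypothesis and reduce the question to the finitely presented case already settled in Corollary \ref{basic}. Since $M$ is an $\mathrm{fg}$-$\Omega^1$-object we may fix a projective presentation $0\to L\overset{\beta}\to P\to M\to 0$ with $P$ projective and $L$ finitely generated, and identify $\Ext^1_\CC(M,-)\cong\Dev_\beta$ by Example \ref{dev}(1). The obstruction to applying Proposition \ref{basic-def}(3) directly is that $P$ is an arbitrary (possibly large) projective, not a finitely presented object, so the heart of the argument will be a \emph{localization} that replaces $P$ by a finite piece. This is the step I expect to be the only real work; everything else is formal.

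First I would enlarge the presentation by an auxiliary projective summand. Using that $\CC$ has a projective strong generator which is a direct sum of finitely presented objects, choose a projective $U$ with $P\oplus U\cong\bigoplus_{i\in I}P_i$, each $P_i$ finitely presented and projective. Adding $U$ yields an exact sequence $0\to L\to P\oplus U\to M\oplus U\to 0$, and since $L$ is finitely generated its image lies in a finite subsum $Q=\bigoplus_{i\in J}P_i$ for some finite $J\subseteq I$. Writing $P\oplus U=Q\oplus Q'$ with $Q'=\bigoplus_{i\in I\setminus J}P_i$ projective and $\beta(L)\subseteq Q$, I obtain $M\oplus U\cong N\oplus Q'$, where $N=Q/\beta(L)$ sits in a short exact sequence $0\to L\to Q\to N\to 0$ with $Q$ finitely presented projective. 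Hence $N$ is finitely presented, its first syzygy is (up to isomorphism) $L$, and $N$ is an $FP_2$-object precisely when $L$ is finitely presented.

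The key consequence is the natural isomorphism $\Ext^1_\CC(M,-)\cong\Ext^1_\CC(N,-)$: the projective summands $U$ and $Q'$ contribute nothing to $\Ext^1$, so from $M\oplus U\cong N\oplus Q'$ both sides compute the same functor (alternatively one applies Corollary \ref{sumanzi} in each direction, since $M$ and $N$ are each direct summands of $M\oplus U\cong N\oplus Q'$). Now $N$ is finitely presented, so Corollary \ref{basic}(2) applies and gives that $\Ext^1_\CC(N,-)$ commutes with respect to direct limits if and only if $N$ is an $FP_2$-object, i.e.\ if and only if $L$ is finitely presented. Combining these facts, $\Ext^1_\CC(M,-)$ commutes with respect to direct limits exactly when the kernel $L$ of the chosen presentation is finitely presented. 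For $(2)\Rightarrow(1)$ I run this with the $\mathrm{fg}$-presentation above: commuting forces $L$ finitely presented, and then $0\to L\to P\to M\to 0$ itself witnesses $M$ as an $\mathrm{fp}$-$\Omega^1$-object. For $(1)\Rightarrow(2)$ I instead start from a presentation whose kernel is finitely presented — the one realizing the $\mathrm{fp}$-$\Omega^1$ property — and carry out the same localization, so that $N$ is $FP_2$ and $\Ext^1_\CC(M,-)\cong\Ext^1_\CC(N,-)$ commutes with respect to direct limits by Corollary \ref{basic}(2).
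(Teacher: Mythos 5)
Your proposal is correct and follows essentially the same route as the paper: both reduce to the finitely presented case by adding a projective complement so that $P\oplus U$ is a direct sum of finitely presented projectives, absorbing the finitely generated syzygy into a finite subsum $Q$ to get $M\oplus U\cong N\oplus Q'$ with $N=Q/\beta(L)$ finitely presented, and then invoking the $FP_2$ characterization (Corollary \ref{basic}(2), resp. \cite[Lemma 3.1.6]{GT06} together with Proposition \ref{basic-def}) for $N$. The only cosmetic difference is that you package both implications through Corollary \ref{basic}(2) after a single reduction, whereas the paper cites the two ingredients separately.
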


\begin{proof}
(1)$\Rightarrow$(2) Suppose that $M$ is an $\mathrm{fp}$-$\Omega^1$-object.  As in the
proof for (2)$\Rightarrow$(3) in the previous theorem, we observe that there is a
projective object $L$ such that $M\oplus L$ is a direct sum of an
$FP_2$-object and a projective object. Therefore we can suppose
that $M$ is $FP_2$. For this case the result is well known (see
\cite[Lemma 3.1.6]{GT06}).

(2)$\Rightarrow$(1) let $0\to K\to P\to M\to 0$ be an exact sequence such
that $K$ is finitely generated. Using again the proof of
$(2)\Rightarrow(3)$ in the previous theorem, there is a projective
object $K$ such that $M\oplus K=N\oplus U$, $N=P'/K$, where $P'$
is a finitely generated projective object and $U$ is projective
such that $P'\oplus U=P\oplus K$. {Then $\Ext^1_\CC(N,-)\cong
\Ext^1_\CC(M,-)$ commutes with respect to direct limits. Since $P'$ is finitely presented 
we can use Lemma~\ref{basic-def}, and we conclude that $K$ is finitely presented.}
\end{proof}

From this proposition and its proof we obtain some useful
corollaries. First of them allows us to construct examples of
objects $M$ such that $\Ext_\CC^1(M,-)$ commutes with respect
to direct unions, but it does not commute with respect to direct limits.

\begin{corollary}\label{coherent-1}
Suppose that $M$ is an $\mathrm{fp}$-$\Omega^1$-object and $0\to
L\to P\to M\to 0$ is an exact sequence such that $P$ is finitely
generated projective. Then $L$ is finitely presented.

Consequently, if for every finitely presented object $M$ the
functor $\Ext^1_\CC(M,-)$ commutes with respect to direct limits then
$\CC$ is coherent.
\end{corollary}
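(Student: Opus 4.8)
The plan is to treat the two assertions in turn, with the syzygy statement carrying the weight and the coherence claim following by a reduction to the finitely generated projective case.

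For the first assertion I would argue by Schanuel's lemma. Since $M$ is an $\mathrm{fp}$-$\Omega^1$-object, I fix a projective resolution whose first syzygy $L'=\Omega^1$ is finitely presented, which gives an exact sequence $0\to L'\to P_0\to M\to 0$ with $P_0$ projective. Alongside the given presentation $0\to L\to P\to M\to 0$, in which $P$ is finitely generated projective and hence finitely presented (recall that in our hypothesis every finitely generated projective object is finitely presented), Schanuel's lemma yields an isomorphism $L\oplus P_0\cong L'\oplus P$. The right-hand side is a finite direct sum of finitely presented objects, hence finitely presented, so $L\oplus P_0$ is finitely presented as well; since the class of finitely presented objects is closed under direct summands (which one reads off from the Lemma on summands of functors applied to $\Hom_\CC(L,-)$ as a retract of $\Hom_\CC(L\oplus P_0,-)$), it follows that $L$ is finitely presented.

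For the \emph{consequently} part I would verify coherence directly: given a finitely generated subobject $L$ of a projective object $Q$, I first reduce to the case where $Q$ is finitely generated. Writing $Q$ as a direct summand of a coproduct $\bigoplus_{i} U_i$ of objects of the generator (each $U_i$ finitely presented projective), I use that $L$ is an epimorphic image of a finitely presented object and that $\bigoplus_i U_i$ is the direct union of its finite subsums, so the embedding of $L$ factors through some finite subsum $Q_0=\bigoplus_{i\in F}U_i$, a finitely generated projective object with $L\leq Q_0$. Now I set $M=Q_0/L$. The sequence $0\to L\to Q_0\to M\to 0$ has $Q_0$ finitely presented and $L$ finitely generated, so $M$ is finitely presented (in particular finitely generated) by the Lemma of Section~\ref{Sect-direct-limits}, and via this very sequence $M$ is an $\mathrm{fg}$-$\Omega^1$-object. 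The hypothesis gives that $\Ext^1_\CC(M,-)$ commutes with respect to direct limits, so the preceding Proposition promotes $M$ to an $\mathrm{fp}$-$\Omega^1$-object; applying the first assertion to $0\to L\to Q_0\to M\to 0$ then shows that $L$ is finitely presented. As $L$ and $Q$ were arbitrary, $\CC$ is coherent.

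The hard part will be the reduction to the finitely generated projective $Q_0$ together with making sure the argument is not circular: the first assertion must be available independently of the preceding Proposition, which it is, being pure Schanuel theory, so that using the Proposition to obtain the $\mathrm{fp}$-$\Omega^1$ property and then feeding it back into the first assertion is legitimate. The remaining points are routine: the validity of Schanuel's lemma in this abelian category with enough projectives, and the closure of finitely presented objects under direct summands.
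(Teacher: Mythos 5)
Your proof is correct and follows essentially the same route as the paper: the first assertion is exactly the paper's Schanuel's lemma argument (your version is even slightly cleaner, since you never need the projective $P_0$ in the comparison resolution to be finitely generated), and the \emph{consequently} part is the intended deduction from the preceding Proposition, with the reduction of an arbitrary projective ambient object to a finite subsum of the generator spelled out where the paper leaves it implicit. There is no circularity, since that Proposition is proved independently of this corollary.
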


\begin{proof}
{We consider an exact sequence $0\to
L_1\to P_1\to M\to 0$ such that $P_1$ is finitely presented projective and $L_1$ is finitely presented.
By Schanuel's lemma we have $L_1\oplus P\cong L\oplus P_1$, and now the conclusion is obvious.
}\end{proof}

In fact, for coherent categories (in particular for modules over
coherent rings or for the category of modules over the category
$\textrm{mod-}R$) the functor $\Ext^1_\CC(M,-)$ commutes with
respect to direct limits if and only it it commutes with respect
direct unions:

\begin{corollary}\label{coherent}
Suppose that $M$ has a projective resolution $(\boldP)$ such that
$\Omega^1(\boldP)$ is a direct union of finitely presented
subobjects. The following are equivalent:
\begin{enumerate}
\item $\Ext^1_\CC(M,-)$ commutes with respect to direct limits;
\item $\Ext^1_\CC(M,-)$ commutes with respect to direct unions;
\item $M$ is a direct summand of an
$\mathrm{fp}$-$\Omega^1$-object.
\end{enumerate}
\end{corollary}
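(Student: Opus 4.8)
The plan is to establish the cycle of implications $(1)\Rightarrow(2)\Rightarrow(3)\Rightarrow(1)$, exploiting the fact that commuting with direct limits is formally stronger than commuting with direct unions, and that the hypothesis on $M$ — namely that $\Omega^1(\boldP)$ is a direct union of finitely presented subobjects — is exactly the coherence-type condition that collapses the two notions. The implication $(1)\Rightarrow(2)$ is immediate, since every direct union is in particular a direct limit, so a functor commuting with all direct limits certainly commutes with the special case of direct limits of monomorphisms. For $(2)\Rightarrow(3)$, I would invoke Theorem \ref{main-th}: commuting of $\Ext^1_\CC(M,-)$ with direct unions forces $M$ to be a direct summand of an $\mathrm{fg}$-$\Omega^1$-object. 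The task is then to upgrade "$\mathrm{fg}$" to "$\mathrm{fp}$" using the standing hypothesis that $\Omega^1(\boldP)$ is a direct union of finitely presented subobjects, together with the preceding Corollary \ref{coherent-1}, which shows that under a finitely generated projective presentation the first syzygy is automatically finitely presented.

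For the core step $(3)\Rightarrow(1)$, I would reduce to the case where $M$ itself is an $\mathrm{fp}$-$\Omega^1$-object (the passage from a direct summand to the whole object being handled by Corollary \ref{sumanzi}, which transfers the commuting property along direct summands). Once $M$ is an $\mathrm{fp}$-$\Omega^1$-object, the Proposition immediately preceding this corollary gives the equivalence of "$M$ is $\mathrm{fp}$-$\Omega^1$" with "$\Ext^1_\CC(M,-)$ commutes with direct limits." Concretely, following the argument in that Proposition, I would add a projective summand to arrange that $M\oplus L$ is a direct sum of an $FP_2$-object and a projective object; then the classical result \cite[Lemma 3.1.6]{GT06} yields commuting with direct limits for the $FP_2$-part, and the projective part contributes a vanishing (hence trivially commuting) $\Ext^1$.

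The main obstacle I anticipate is bookkeeping rather than conceptual: ensuring that the hypothesis "$\Omega^1(\boldP)$ is a direct union of finitely presented subobjects" is used correctly to promote the finitely generated syzygy produced by Theorem \ref{main-th} to a finitely presented one. The delicate point is that an $\mathrm{fg}$-$\Omega^1$-object need not be an $\mathrm{fp}$-$\Omega^1$-object in general; the implication holds here precisely because of the coherence-flavoured assumption on $M$, which I would combine with Schanuel's Lemma (as in the proof of Corollary \ref{coherent-1}) to compare two projective presentations and conclude that a finitely generated syzygy appearing over a finitely presented projective presentation must itself be finitely presented. Apart from this, the proof is a straightforward assembly of the earlier results, and no new computation beyond the syzygy comparison is required.
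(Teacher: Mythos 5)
Your overall skeleton is right: $(1)\Rightarrow(2)$ is formal, $(3)\Rightarrow(1)$ follows from Corollary \ref{sumanzi} together with the Proposition preceding the corollary, and the hypothesis on $\Omega^1(\boldP)$ must enter in $(2)\Rightarrow(3)$. But the mechanism you propose for $(2)\Rightarrow(3)$ is circular. Corollary \ref{coherent-1} does not say that a finitely generated syzygy over a finitely presented projective presentation is automatically finitely presented; its hypothesis is that $M$ is already an $\mathrm{fp}$-$\Omega^1$-object, which is precisely the conclusion you are after. Likewise, Schanuel's Lemma lets you transfer finite presentation of a syzygy from one presentation to another only if you already have a presentation with a finitely presented syzygy --- again the thing to be proved. (The statement you want to extract, that a finitely generated syzygy over a finitely presented projective forces finite presentation, is exactly coherence and fails in general.) Note also that the $\mathrm{fg}$-$\Omega^1$-object $P/\beta(H)$ produced by Theorem \ref{main-th} need not be finitely generated and $P$ need not be finitely presented, so Corollary \ref{coherent-1} would not even apply in form.

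The paper's (one-line) proof instead reruns the proof of Theorem \ref{main-th}$(1)\Rightarrow(2)$ with a better chosen directed family: write $L=\Omega^1(\boldP)$ as the given direct union of \emph{finitely presented} subobjects $L_i$ and apply Lemma \ref{epi-dev} to $f=1_L$, as in Theorem \ref{unions-1}. The subobject through which $1_L-g\beta$ factors can then be taken to be one of the $L_i$, hence finitely presented; equivalently, the finitely generated $H$ produced by Theorem \ref{unions-1} embeds into some $L_i$ and may be enlarged to it, since the only property of $H$ used in the splitting argument is $\Ima(1_L-g\beta)\subseteq H$. Then $M$ is a direct summand of $P/\beta(L_i)$, which is an $\mathrm{fp}$-$\Omega^1$-object, giving (3). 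This is the step your write-up is missing.
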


\begin{proof}
In the proof of Theorem \ref{main-th} we can choose $\frakF$ such
that all $M_i$ are finitely presented.
\end{proof}

As each countably generated object is a direct union of a chain of finitely generated modules, Theorem~\ref{main-th}, Example~\ref{dev}(1) and the previous result implies the following consequence:

\begin{corollary}\label{du>ds} If $\Ext^1(M,-)$ commutes with direct sums and $L$ is countable generated, then $M$ is 2-almost projective,
hence $\Ext^1(M,-)$ commutes with direct unions.
\end{corollary}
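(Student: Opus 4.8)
The plan is to combine the hypothesis that $\Ext^1(M,-)$ commutes with direct sums with the countable generation of $L$ (the first syzygy of $M$) in order to verify the criterion from Proposition~\ref{def-omega}. Recall that we have a projective presentation $0\to L\overset{\beta}\to P\to M\to 0$, and by Example~\ref{dev}(1) the functor $\Ext^1_\CC(M,-)$ is naturally equivalent to $\Dev_\beta$. Since $\Dev_\beta$ commutes with respect to direct sums, it in particular commutes with respect to countable direct sums, so Proposition~\ref{def-omega} applies: for every countable chain of subobjects $L_0\hookrightarrow L_1\hookrightarrow\cdots$ whose direct union is $L$, there exists an index $n$ for which the induced map $\beta':L/L_n\to P/\beta(L_n)$ is a splitting monomorphism.

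First I would use the hypothesis that $L$ is countably generated to produce a concrete such chain. Writing a countable generating set $\{x_k\mid k\in\N\}$ of $L$, set $L_n$ to be the finitely generated subobject generated by $x_0,\dots,x_n$; these form an increasing chain of finitely generated subobjects whose direct union is $L$. Applying Proposition~\ref{def-omega} to this chain yields an $n$ for which $\overline\beta:L/L_n\to P/\beta(L_n)$ is a split monomorphism, with $L_n$ finitely generated. Since $\Coker(\overline\beta)\cong\Coker(\beta)\cong M$ (the passage to quotients by $L_n$ and $\beta(L_n)$ does not change the cokernel of $\beta$), it follows that $M$ is isomorphic to a direct summand of $P/\beta(L_n)$. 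As $P$ is projective and $L_n$ is finitely generated, the object $P/\beta(L_n)$ is precisely an $\mathrm{fg}$-$\Omega^1$-object, so $M$ is a direct summand of an $\mathrm{fg}$-$\Omega^1$-object.

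At this point the statement follows by invoking Theorem~\ref{main-th}: being a direct summand of an $\mathrm{fg}$-$\Omega^1$-object is exactly condition (2) there, which is equivalent to $M$ being $2$-almost projective (condition (3)) and to $\Ext^1_\CC(M,-)$ commuting with respect to direct unions (condition (1)). Thus $M$ is $2$-almost projective and $\Ext^1_\CC(M,-)$ commutes with direct unions, as claimed.

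The step I expect to require the most care is verifying that the finitely generated subobjects $L_n$ genuinely have $L$ as their direct union and that the hypotheses of Proposition~\ref{def-omega} are met (in particular that the chain is a chain of genuine subobjects with union $L$), rather than any deep argument; the heavy lifting has already been done in Proposition~\ref{def-omega} and Theorem~\ref{main-th}. One subtlety worth a sentence is confirming that the cokernel is unchanged, i.e. that $\Coker(\overline\beta)\cong M$; this is immediate from the snake lemma applied to the quotient maps, but it is the hinge that lets us transfer the splitting of $\overline\beta$ into a direct-summand statement about $M$.
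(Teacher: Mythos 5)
Your proof is correct and follows essentially the route the paper intends: the paper gives no explicit argument, only the remark that a countably generated object is a direct union of a chain of finitely generated subobjects combined with Example~\ref{dev}(1), the countable-direct-sum criterion of Proposition~\ref{def-omega}, and Theorem~\ref{main-th}, which is exactly the chain of reductions you carry out. The only cosmetic caveat is that your construction of the chain $L_n$ via generators $x_0,\dots,x_n$ is phrased module-theoretically, whereas in the general locally finitely presented setting one should simply take partial sums of a countable family of finitely generated subobjects whose direct union is $L$.
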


Moreover, for coherent categories we obtain from Theorem \ref{main-th} a generalization of \cite[Theorem A]{Cor}:

\begin{theorem}\label{coherent-cor}
Suppose that $\CC$ is a coherent category. The following are equivalent for an object $M$ and a positive integer $n$:
\begin{enumerate}
 \item the $\Ext_\CC^n(M,-)$ commutes with respect to direct limits (unions);
 \item in $m\geq n$ is an integer then then $\Ext_\CC^m(M,-)$ commutes with respect to direct limits (unions).
\end{enumerate}
\end{theorem}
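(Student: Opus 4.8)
\emph{Plan.} The implication $(2)\Rightarrow(1)$ is immediate (take $m=n$). For $(1)\Rightarrow(2)$ the plan is to reduce to the case $n=1$ by dimension shifting and then to exploit the description of $2$-almost projective objects from Theorem~\ref{main-th} together with the coherence of $\CC$. First I would fix a projective resolution $\boldP$ of $M$ and set $N=\Omega^{n-1}(\boldP)$, the $(n-1)$-st syzygy. The dimension-shifting isomorphisms are natural, so $\Ext^n_\CC(M,-)\cong\Ext^1_\CC(N,-)$ and, more generally, $\Ext^m_\CC(M,-)\cong\Ext^{m-n+1}_\CC(N,-)$ for every $m\ge n$; these natural isomorphisms identify the canonical comparison maps $\Phi_\frakF$, so $\Ext^n_\CC(M,-)$ commutes with a given direct system (limit, resp. union) exactly when $\Ext^1_\CC(N,-)$ does, and likewise in the higher degrees. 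Thus $(1)$ says precisely that $\Ext^1_\CC(N,-)$ commutes with direct limits (unions). Since every invariant used below is unaffected by adding or deleting projective summands of $N$, the choice of resolution is harmless by Schanuel's lemma.

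Next, since $\CC$ is coherent, I would invoke Corollary~\ref{coherent} (in the limit case) or Theorem~\ref{main-th} (in the union case) to conclude that $N$ is $2$-almost projective, say $N\oplus N'\cong Q\oplus F$ with $Q$ projective and $F$ finitely presented. Because $\Ext^{j}_\CC(Q,-)=0$ for $j\ge 1$ and $\Ext^{j}_\CC$ is additive, $\Ext^{m-n+1}_\CC(N,-)$ is isomorphic to a direct summand of $\Ext^{m-n+1}_\CC(F,-)$. By the functorial direct-summand lemma preceding Corollary~\ref{sumanzi}, it then suffices to show that $\Ext^{j}_\CC(F,-)$ commutes with direct limits (unions) for every $j\ge 1$.

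This last point is where coherence does the real work. I would show that a finitely presented object $F$ in a coherent category is $FP_\infty$: taking $0\to K\to Q_0\to F\to 0$ with $Q_0$ finitely generated projective (hence finitely presented in our setting), the kernel $K$ is finitely generated by the Lemma in Section~\ref{Sect-direct-limits}, and being a finitely generated subobject of the projective $Q_0$ it is finitely presented by coherence; iterating shows every syzygy of $F$ is finitely presented. Consequently each $\Omega^{j-1}(F)$ is $FP_2$, so Corollary~\ref{basic} (via the dimension shift $\Ext^{j}_\CC(F,-)\cong\Ext^1_\CC(\Omega^{j-1}(F),-)$) yields that $\Ext^{j}_\CC(F,-)$ commutes with direct limits, while Corollary~\ref{cor-br-str} gives the same for direct unions. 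Combining the three steps proves $(1)\Rightarrow(2)$.

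The main obstacle I anticipate is the bookkeeping around dimension shifting: one must verify that the natural isomorphisms $\Ext^m_\CC(M,-)\cong\Ext^{m-n+1}_\CC(N,-)$ are genuinely compatible with the comparison morphisms $\Phi_\frakF$, so that the commuting property transports between degrees, and that passing to the syzygy $N$ (determined only up to projective summands) leaves all the invariants used intact. Once these naturality and invariance issues are settled, the coherence input---finitely presented implies $FP_\infty$---is routine.
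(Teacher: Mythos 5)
Your proof is correct, but it follows a genuinely different route from the paper's. The paper also reduces to $n=1$ by dimension shifting, but then proceeds degree by degree: writing $0\to L\overset{\beta}\to P\to M\to 0$, it invokes Theorem~\ref{unions-1}(6) to produce a finitely generated subobject $H\leq L$ such that $L/H$ is a direct summand of the $\mathrm{fg}$-$\Omega^1$-object $P/\beta(H)$, observes that coherence forces $H$ (a finitely generated subobject of $P$) to be finitely presented, and then runs a four-lemma diagram chase on the $\Ext$-sequence of $0\to H\to L\to L/H\to 0$ to conclude that $\Ext^1_\CC(L,-)\cong\Ext^2_\CC(M,-)$ commutes; iteration handles all $m\ge n$. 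You instead exploit the $2$-almost projective decomposition $N\oplus N'\cong Q\oplus F$ once and for all: every $\Ext^{j}_\CC(N,-)$ with $j\ge 1$ is a direct summand of $\Ext^{j}_\CC(F,-)$, and the only coherence input is that a finitely presented object is $FP_\infty$, after which Corollary~\ref{basic} and Corollary~\ref{cor-br-str} finish the job. The naturality worry you flag is genuine but harmless: naturally isomorphic functors have identified comparison maps $\Phi_\frakF$, exactly as in the lemma preceding Corollary~\ref{sumanzi}, so the commuting property transports across the dimension-shifting isomorphisms. Both arguments rest on the same two pillars --- Theorem~\ref{main-th} and the fact that coherence makes finitely generated subobjects of projectives finitely presented --- but yours trades the induction and the diagram chase for a single reduction to the $FP_\infty$ object $F$, which makes it more transparent why all degrees $m\ge n$ behave uniformly.
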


\begin{proof}
(1)$\Rightarrow$(2) By dimension shifting formula we can assume $n=1$, and it is enough to prove that $\Ext_\CC^2(M,-)$
commutes with respect to direct unions. 

Let $0\to L\overset{\beta}\to P\to M\to 0$ be an exact sequence such that $P$ is projective. By Theorem \ref{unions-1}(6) 
there exists a finitely generated subobject $H\leq L$ such that the induced homomorphism $\beta':L/H\to P/\beta(H)$ 
is split mono. Using Theorem \ref{main-th} we obtain that $\Ext_\CC^1(L/H,-)$ commutes with respect to direct limits. Moreover,
we can view $H$ as a finitely generated subobject of $P$, hence $H$ is finitely presented. Therefore for every 
directed family $\frakF=(M_{ij},v_{ij})$, in the commutative diagram  
\begin{equation*}
\xymatrix{
 \underrightarrow{\lim}(H,M_i) \ar[r] \ar[d]^{\Psi^H_\frakF} &
\underrightarrow{\lim}\Ext_\CC^1(L/H,M_i) \ar[r] \ar[d]^{\Phi^{L/H}_\frakF} &  \underrightarrow{\lim}\Ext_\CC^1(L,M_i)
\ar[d]^{\Phi^L_\frakF} \ar[r] & 
\underrightarrow{\lim}\Ext_\CC^1(H, M_i) \ar[d]^{\Phi^\beta_\frakF}  \\
 (H,\underrightarrow{\lim}M_i) \ar[r] &
\Ext_\CC^1(L/H,\underrightarrow{\lim}M_i) \ar[r] &
\Ext_\CC^1(L,\underrightarrow{\lim}M_i) \ar[r] & \Ext_\CC^1(H,\underrightarrow{\lim}M_i) }
\end{equation*}
the homomorphisms $\Psi^H_\frakF$, $\Phi^{L/H}_\frakF$ and $\Phi^{H}_\frakF$ are isomorphisms. Then $\Phi^{L}_\frakF$
is also an isomorphism, and the proof is complete.

(2)$\Rightarrow$(1) is obvious.
\end{proof}



\section{Ext-steady rings}

We say that the category $\CC$ is \textsl{finite ext-steady} if
for every finitely generated object $M$ such that $\Ext^1(M,-)$ commutes with all direct sums 
it holds that $\Ext^1(M,-)$ commutes with all direct unions.

\begin{proposition}\label{p1} The following conditions equivalent:
\begin{enumerate}
\item[(1)] $\CC$ is finite ext-steady,
\item[(2)] every small subobject of every  projective object is finitely generated,
\item[(3)] every small subobject of every finitely generated projective object is finitely generated.
\end{enumerate}
\end{proposition}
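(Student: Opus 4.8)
The plan is to run the cycle $(1)\Rightarrow(3)\Rightarrow(2)\Rightarrow(1)$, translating the functorial condition in (1) into the homological language already developed for $\Dev_\beta$. The dictionary I will use is the following: for a finitely generated object $M$ equipped with a projective presentation $0\to L\overset{\beta}\to P\to M\to 0$ in which $P$ is finitely generated projective (hence finitely presented in our setting), the functor $\Ext^1_\CC(M,-)$ commutes with direct sums if and only if $L$ is small (the last corollary of Section~\ref{sect-dev-direct-sums}), whereas $\Ext^1_\CC(M,-)$ commutes with direct unions if and only if $M$ is finitely presented, equivalently if and only if $L$ is finitely generated (Corollary~\ref{basic}(1), combined with the characterization of finite presentation of $M$ by finite generation of $L$).

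For $(1)\Rightarrow(3)$, given a small subobject $L$ of a finitely generated projective object $P$, I would put $M=P/L$, which is finitely generated, and consider $0\to L\to P\to M\to 0$. Smallness of $L$ makes $\Ext^1_\CC(M,-)$ commute with direct sums, so finite ext-steadiness forces it to commute with direct unions; by the dictionary $M$ is finitely presented and hence $L$ is finitely generated. The implication $(2)\Rightarrow(1)$ is the same argument read backwards: for $M$ finitely generated with $\Ext^1_\CC(M,-)$ commuting with direct sums, choose a finitely generated projective $P$ presenting $M$, conclude that the kernel $L$ is small, apply (2) to get that $L$ is finitely generated, and deduce that $M$ is finitely presented, so that $\Ext^1_\CC(M,-)$ commutes with direct unions. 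The implication $(2)\Rightarrow(3)$ is trivial, a finitely generated projective object being in particular projective.

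The only step requiring a genuine idea is $(3)\Rightarrow(2)$, where the ambient projective object is no longer finitely generated. Let $L$ be a small subobject of an arbitrary projective object $Q$. Since $\CC$ has a projective generator that is a direct sum of finitely presented projective objects, $Q$ is a direct summand of some direct sum $\bigoplus_{i\in I}P_i$ of finitely presented projective objects (as in the proof of Theorem~\ref{main-th}), and composing with $L\hookrightarrow Q$ gives a monomorphism $L\to\bigoplus_{i\in I}P_i$. Here smallness does the essential work: because $\Hom_\CC(L,-)$ commutes with direct sums, this monomorphism factors through a finite subsum, so its image lies in $\bigoplus_{i\in I_0}P_i$ for some finite $I_0\subseteq I$. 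Thus $L$ is a subobject of the finitely generated projective object $\bigoplus_{i\in I_0}P_i$, and (3) yields that $L$ is finitely generated.

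I expect the main obstacle to be precisely this reduction in $(3)\Rightarrow(2)$: one must check that the factorization produced by smallness is compatible with the subobject structure, i.e. that the image of $L$ genuinely lands inside the finite subsum rather than merely that some map factors, and that a finite direct sum of finitely presented projective objects is again a finitely generated projective object so that (3) can be applied to it.
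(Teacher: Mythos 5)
Your proof is correct and uses essentially the same ingredients as the paper's: the equivalence between smallness of the first syzygy and commuting of $\Ext^1(M,-)$ with direct sums, Corollary \ref{basic} together with Theorem \ref{main-th}, and the factorization of a monomorphism from a small object through a finite subsum of finitely presented projectives. The only difference is organizational: the paper runs the cycle $(1)\Rightarrow(2)\Rightarrow(3)\Rightarrow(1)$, absorbing your separate reduction $(3)\Rightarrow(2)$ directly into its proof of $(1)\Rightarrow(2)$.
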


\begin{proof} (1)$\Rightarrow$(2) Let $L$ be a small subobject of a projective object $P$, i.e. there exist a cardinal $\kappa$, a family $P_i\in\CU$, $i<\kappa$, and a monomorphism
$L\to\bigoplus_{i<\kappa}P_i$. 
Moreover, as $L$ is small, there exists $F\subset\kappa$ and monomorphism $\iota:L\to\bigoplus_{i\in F}P_i$. 
Put $M=\Coker(\iota)$. By Lemma~\ref{small}, the functor $\Ext^1(M,-)$ commutes with direct sums, hence it commutes with respect to direct unions by the hypothesis. Thus $L$ is finitely generated
by Theorem~\ref{main-th}.

(2)$\Rightarrow$(3) Clear.

(3)$\Rightarrow$(1) Let $M$ be a finitely generated object such that $\Ext^1(M,-)$ commutes with direct sums.
Then homomorphisms $\Psi_\frakF^M$, $\Psi_\frakF^P$ and $\Phi_\frakF^M$  from the diagram \eqref{D10} are isomorphisms, $\Phi_\frakF^L$ is isomorphism as well, hence $L$ is small. 
By the hypothesis $L$ is finitely generated. Thus $\Ext^1(M,-)$ commutes with direct unions by Theorem~\ref{main-th}(4).
\end{proof}

We say that a unital ring $R$ is right finite ext-steady if the category of all right $R$-modules is finite ext-steady.

\begin{example}
It is proved in \cite{Tr} that every infinite product of unital rings contains an infinitely generated small ideal, hence 
it is not finite ext-steady.
\end{example}

As every finitely generated projective module is a direct summand of a finitely generated free module we obtain a consequence of the last
proposition:

\begin{corollary}\label{cp1} A ring $R$ is a right finite ext-steady if and only if every small right ideal is finitely generated.
\end{corollary}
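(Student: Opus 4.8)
The plan is to deduce the corollary from Proposition~\ref{p1}. That proposition identifies right finite ext-steadiness of $R$ with its condition (3): every small subobject of every finitely generated projective right $R$-module is finitely generated. Granting this, one implication is immediate. The regular module $R_R$ is finitely generated and projective, and a small right ideal is exactly a small subobject of $R_R$; hence condition (3) forces every small right ideal to be finitely generated. So the substance lies in the reverse implication, that the rank-one hypothesis already yields condition (3).

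For that I would use the observation that every finitely generated projective module $Q$ is a direct summand of some $R^n$. Thus a small subobject $L$ of $Q$ is in particular a small submodule of $R^n$, and the whole statement reduces to the following: \emph{if every small right ideal is finitely generated, then every small submodule of $R^n$ is finitely generated.} I would attempt this by induction on $n$, the case $n=1$ being the hypothesis. For the step, apply a coordinate projection $\rho\colon R^n\to R$. Since $L$ is small, its image $\rho(L)$ is a small right ideal, hence finitely generated, so one may pick a finitely generated $Z\le L$ with $\rho(Z)=\rho(L)$; then $L=Z+(L\cap R^{n-1})$, and $L/Z$, being a quotient of $L$, is again small.

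The hard part is exactly at this junction. The class of small modules is closed under quotients and under extensions, but \emph{not} under submodules, so one cannot simply declare $K:=L\cap R^{n-1}$ to be small and feed it to the inductive hypothesis; indeed for a non-coherent ring the module $Z\cap R^{n-1}$ that governs $L/Z$ is a relation module of the finitely generated ideal $\rho(L)$ and need not be finitely generated. I would therefore route the argument through the finitely generated cokernel $M:=R^n/L$ rather than through the kernel $K$: by Lemma~\ref{small} the functor $\Ext^1_\CC(M,-)\cong\Dev_\beta$ (with $\beta\colon L\hookrightarrow R^n$, cf. Example~\ref{dev}(1)) commutes with direct sums, and the crux is to promote this to commuting with direct unions. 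Once that is secured, Theorem~\ref{unions-1} supplies a finitely generated $H\le L$ for which the induced map $L/H\to R^n/\beta(H)$ is a split monomorphism; since $\beta(H)$ and $R^n$ are finitely generated, $R^n/\beta(H)$ and hence its direct summand $L/H$ are finitely generated, and therefore so is $L$. The delicate bridge from direct sums to direct unions is where the rank-one hypothesis must be made to bear, and recognizing that this bridge is precisely finite ext-steadiness is what Proposition~\ref{p1} accomplishes; in the end the corollary is a repackaging of condition (3) together with the reduction, through summands of free modules, from arbitrary finitely generated projectives to $R$ itself.
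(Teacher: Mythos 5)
Your reduction to free modules and the easy direction are fine, and you have put your finger on exactly the right difficulty: in the inductive step the kernel $L\cap R^{n-1}$ is a submodule of the small module $L$, and submodules of small modules need not be small, so the inductive hypothesis cannot be applied to the kernel. The problem is that your proposed repair is circular. Passing to $M=R^n/L$ and invoking Lemma~\ref{small} only gives that $\Ext^1_\CC(M,-)$ commutes with direct sums; the ``bridge'' from direct sums to direct unions that you then need is, for a finitely generated $M$, literally the definition of finite ext-steadiness, i.e.\ the conclusion of the corollary. Proposition~\ref{p1} does not supply this bridge from the rank-one hypothesis: its condition (3) quantifies over small subobjects of \emph{all} finitely generated projectives, and the entire content of the corollary is precisely the reduction of that condition to small right ideals of $R$ alone. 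So, as written, the crucial step of your argument assumes what it is meant to prove.

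The paper closes the induction by staying at the level of submodules of $R^n$ and never applying the inductive hypothesis to a kernel. With $\pi:R^n\to R$ and $\nu:R\to R^n$ the first-coordinate projection and injection, it uses that $I/(I\cap\nu(R))\cong (I+\nu(R))/\nu(R)$ is a \emph{quotient} of $I$, hence small, and embeds into $R^{n-1}$, so the inductive hypothesis applies to it; meanwhile $\pi(I)$ is a small right ideal, hence finitely generated, and the remaining piece $I\cap\nu(R)$ is trapped inside $\nu\pi(I)\cong\pi(I)$. The point is that smallness survives the two quotient operations (image under $\pi$, image in $R^n/\nu(R)$), and the resulting finitely generated pieces are then assembled to recover $I$. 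To salvage your approach you would have to replace the appeal to ext-steadiness by a concrete argument of this kind; the functorial machinery of Lemma~\ref{small}, Theorem~\ref{unions-1} and Theorem~\ref{main-th} can only be brought in after the rank-one hypothesis has been upgraded to all of $R^n$, not as the means of upgrading it.
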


\begin{proof} By Proposition~\ref{p1} it is enough to prove that every small submodule $I$ of every finitely generated free module $R^n$ is finitely
generated. Proceed by induction, the claim is clear for $n=1$ hence suppose that $n>1$ and denote by $\pi:R^n\to R$ the canonical projection
and by $\nu:R\to R^n$ the canonical injection on the first coordinate. As $\pi(I)$ is small so finitely generated submodule of $R$,
and the small module $I+\nu(R)/\nu(R)\cong I/(I\cap\nu(R))$ is embeddable into $R^{n-1}$, the module $I/(I\cap\nu(R))$
is finitely generated by the induction hypothesis. Moreover, $I/\nu\pi(I)$ and $\nu\pi(I)$ are finitely generated as well because
$I\cap\nu(R)\subset \nu\pi(I)$, hence $I$ is finitely generated.
\end{proof}

Recall that a ring is \textsl{right steady} provided every small right module is necessarily finitely generated, \cite{EGT}.
However general structural ring-theoretical characterization of right steady rings is still an open problem, various classes
of rings are known to be right steady (noetherian and perfect \cite{Re}, semiartinian of countable Loewy chain \cite{EGT}).
Let us remark here that the criterion of steadiness for commutative semiartinian rings \cite{RTZ} and for regular semiartinian rings
with primitive factors artinian \cite{Z06} has a similar form as Corollary~\ref{cp1} since steadiness is in this cases equivalent to the
condition that every small ideal of every factor-ring is finitely generated.

\begin{corollary} The category of right modules over right steady or countable ring is right finite ext-steady. 
\end{corollary}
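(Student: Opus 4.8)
The plan is to reduce both cases to the criterion of Corollary~\ref{cp1}, which asserts that $R$ is right finite ext-steady precisely when every small right ideal of $R$ is finitely generated. Thus in each case it suffices to verify that a right ideal which is small as a right $R$-module is necessarily finitely generated.

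The right steady case is immediate: by definition a right steady ring is one for which every small right module is finitely generated, and a small right ideal is in particular a small right module, so it is finitely generated; Corollary~\ref{cp1} then applies.

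For the countable case, first I would observe that every right ideal $I\leq R$ is itself countable, hence can be exhausted by an ascending chain of finitely generated submodules $I_0\subseteq I_1\subseteq\cdots$ with $\bigcup_{n\in\N}I_n=I$ (take $I_n$ generated by the first $n$ terms of a fixed enumeration of $I$). If moreover $I$ is small, then $\Hom(I,-)$ commutes with countable direct sums, which by Example~\ref{dev}(2) is exactly the statement that $\Dev_{\iota}$ commutes with countable direct sums for the zero map $\iota:I\to 0$. Applying Proposition~\ref{def-omega} with $P=0$ to the chain $(I_n)$, the induced map $I/I_n\to 0$ must be a splitting monomorphism for some $n$, i.e. $I=I_n$, so $I$ is finitely generated. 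Corollary~\ref{cp1} again gives the conclusion.

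I expect the only real subtlety to lie in the countable case, specifically in invoking the correct characterization of smallness: one must pass from ``$\Hom(I,-)$ commutes with direct sums'' to ``$\Hom(I,-)$ commutes with countable direct sums'' (harmless, being the well-known equivalence recalled in the paper) and then feed the concrete exhausting chain $(I_n)$ into Proposition~\ref{def-omega} with the trivial codomain $P=0$. The right steady case is essentially a definitional unwinding and presents no difficulty.
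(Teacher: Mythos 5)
Your proof is correct and follows exactly the route the paper intends: the corollary is stated without proof immediately after Corollary~\ref{cp1}, and the evident argument is to verify that every small right ideal is finitely generated, which is definitional for steady rings and follows for countable rings from the fact that a countably generated small module cannot be a strictly increasing union of a countable chain of proper submodules (Rentschler's criterion, recovered here via Proposition~\ref{def-omega} with $P=0$ and Example~\ref{dev}(2)). Your application of Proposition~\ref{def-omega} is sound, since $L/L_n\to 0$ splitting mono forces $L=L_n$.
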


Since there are known countable non-steady rings, as it is illustrated in the following example, the inclusion
of classes of steady rings and finite ext-steady rings is strict.

\begin{example}
Let $F$ be a countable field and $\mathbb X$ is a infinite countable set. Then it is is well known that
over the polynomial ring $F\langle\mathbb X\rangle$ in noncommuting variables $\mathbb X$ every injective module
is small. Thus $F\langle\mathbb X\rangle$ is a non-steady countable ring.
\end{example}

\begin{remark}
From Proposition \ref{p1} and \cite[Example 14]{ZT} we deduce that the ext-steadiness property is not left-right symmetric.  
\end{remark}

We conclude the section with generalization of Corollary~\ref{du>ds} in the case of modules over perfect rings.

\begin{proposition}\label{p2} Let $R$ be a right perfect ring and $M$ a right $R$-module
such that $\Ext^1(M,-)$ commutes with respect to direct sums. Then $M$ is fg-$\Omega^1$.
\end{proposition}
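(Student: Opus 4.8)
The plan is to represent $\Ext^1_\CC(M,-)$ as a defect functor coming from a projective cover and then to feed the chain-splitting criterion of Proposition~\ref{def-omega} into the two structural features of a right perfect ring: the radical $J$ of $R$ is such that $AJ$ is superfluous in every right module $A$ (so $AJ\subsetneq A$ whenever $A\neq 0$), and $R/J$ is semisimple.

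First I would choose a projective cover $\pi\colon P\to M$, which exists because $R$ is right perfect, and put $L=\ker\pi$. Then $L$ is superfluous in $P$, hence $L\subseteq\rad(P)=PJ$, and with $\beta\colon L\hookrightarrow P$ the inclusion Example~\ref{dev}(1) identifies $\Ext^1_\CC(M,-)$ with $\Dev_\beta$. Since commuting with direct sums in particular forces commuting with countable direct sums, Proposition~\ref{def-omega} applies and tells us: for every countable chain $L_0\hookrightarrow L_1\hookrightarrow\cdots$ of subobjects of $L$ with $L=\bigcup_nL_n$ there is an index $n$ for which $\overline\beta\colon L/L_n\to P/L_n$ is a split monomorphism.

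Next I would convert this splitting into stabilization of the chain. If $L/L_n\to P/L_n$ splits, then $L/L_n$ is a direct summand of $P/L_n$; but $L_n\subseteq L\subseteq PJ$ forces $L/L_n\subseteq PJ/L_n=\rad(P/L_n)$, and a direct summand lying inside the radical must coincide with its own radical, so $L/L_n=\rad(L/L_n)=(L/L_n)J$. Perfectness of $R$ then gives $L/L_n=0$, i.e.\ $L=L_n$. Thus every countable chain of subobjects with union $L$ must stabilize at $L$.

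The hard part will be to pass from this stabilization property to genuine finite generation of $L$, since a priori $L$ could require uncountably many generators and one cannot then form a countable exhausting chain of small subobjects. This is exactly where the semisimplicity of $R/J$ is decisive. Suppose $L$ were not finitely generated; since $\rad(L)=LJ$ is superfluous, $L/LJ$ is then an infinitely generated semisimple right $R/J$-module, say $L/LJ=\bigoplus_{i\in I}S_i$ with $I$ infinite. Partitioning $I$ into countably many nonempty blocks, the corresponding partial sums give a strictly increasing countable chain of proper subobjects of $L/LJ$ with union $L/LJ$; pulling them back along the projection $L\to L/LJ$ (each preimage contains $LJ$) yields a strictly increasing countable chain of proper subobjects of $L$ with union $L$. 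This contradicts the stabilization established above, so $L$ must be finitely generated. Consequently the projective cover sequence $0\to L\to P\to M\to 0$, completed to a projective resolution, exhibits $M$ as an $\mathrm{fg}$-$\Omega^1$-object, as claimed.
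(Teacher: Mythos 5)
Your proof is correct, but it takes a genuinely different route from the paper's. The paper does not go through the chain criterion of Proposition~\ref{def-omega} at all: starting from an arbitrary projective presentation, it applies Theorem~\ref{oplus} to the composite $f:L\to L/LJ=\bigoplus_{i\in I}S_i$ to get a finite $F\subseteq I$ and $g:P\to\bigoplus_{i\in I\setminus F}S_i$ with $\pi_{I\setminus F}f=g\beta$; lifting $g$ through a projective cover $Q$ of $\bigoplus_{i\in I\setminus F}S_i$ makes $\tau\beta:L\to Q$ a split epimorphism onto a projective, so $Q$ splits off from $L$ (and its image $\beta\varphi(Q)$ from $P$), the complementary summand $V$ satisfies $V/VJ\cong\bigoplus_{i\in F}S_i$ and is therefore finitely generated, and the quotient presentation $0\to L/\varphi(Q)\to P/\beta\varphi(Q)\to M\to 0$ exhibits the $\mathrm{fg}$-$\Omega^1$ property. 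You instead normalize to the projective cover of $M$ so that $L\subseteq PJ$, turn Proposition~\ref{def-omega} into the statement that every countable exhausting chain of $L$ stabilizes (a direct summand of $P/L_n$ contained in its radical equals its own radical and hence vanishes by $T$-nilpotency of $J$), and then rule out infinite generation by building a strictly increasing non-stabilizing chain from the infinitely generated semisimple top $L/LJ$. Both arguments exploit perfectness in the same two ways (semisimplicity of $R/J$ and superfluousness of $AJ$), but yours yields the slightly sharper conclusion that the minimal first syzygy itself is finitely generated, at the price of an argument by contradiction; the paper's version is constructive, works from any projective presentation, and produces the finitely generated syzygy explicitly.
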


\begin{proof} 
Denote by $J$ the Jacobson radical of $R$. Since $R$ is right perfect, $L/LJ$ is semisimple, hence $L/LJ\cong \bigoplus_{i\in I} S_i$ for a 
family of simple modules $(S_i,i\in I)$. We consider $f:L\to \bigoplus_{i\in I} S_i$
as the canonical projection. Then by Theorem~\ref{oplus} there exists a finite set 
$F\subset I$, and $g:P\to \bigoplus_{i\in I\setminus F} S_i$ such that $\pi_{I\setminus F}f=g \beta$.

Let $\rho:Q\to \bigoplus_{i\in I\setminus F} S_i$ be a projective cover of $\bigoplus_{i\in I\setminus F} S_i$
which exists because $R$ is right perfect. As $\rho$ is surjective, there exists a homomorphisms $\tau:P\to Q$
such that $\rho\tau=g$. 
Note that $\Ker\rho = QJ$ is superfluous in $Q$ and $\rho\tau\beta=g\beta=\pi_{I\setminus F}f$, thus $\tau\beta(L)=Q$ where $Q$ is projective.
Clearly, there exists a homomorphism $\varphi:Q\to P$ such that $\tau\beta\varphi=\id_Q$, hence $L=\varphi(Q)\oplus \Ker \tau\beta$ and
$P=\beta\varphi(Q)\oplus \Ker\tau$. Since the factorization by $\varphi(Q)$ induces a short exact sequence
\[
0\to L/\varphi(Q)\to P/\beta\varphi(Q)\to M\to 0
\]
 and $P/\beta\varphi(Q)$ is projective, it remains to prove that $L/\varphi(Q)$
is finitely generated. This follows from the observations that $V=\Ker \tau\beta\cong L/\varphi(Q)$ and $V/VJ\cong\bigoplus_{i\in F} S_i$.
\end{proof}



Finally we summarize the results about connections between possible commuting properties of a functor $\Ext^1(M,-)$:

\begin{corollary}
For a right $R$-module $M$ we consider the following possible properties:
\begin{enumerate}
 \item[(DS)] $\Ext^1(M,-)$ commutes with respect to direct sums;
  \item[(DU)] $\Ext^1(M,-)$ commutes with respect to direct unions;
   \item[(DL)] $\Ext^1(M,-)$ commutes with respect to direct limits.
\end{enumerate}

Then the following are true:
\begin{enumerate}
 \item If $R$ is hereditary then {\rm (DS)}$\Leftrightarrow${\rm (DU)}$\Leftrightarrow${\rm (DL)}.
  \item $R$ is right coherent if and only if {\rm (DU)}$\Leftrightarrow${\rm (DL)} for all right $R$-modules $M$.
   \item If $R$ is right perfect then {\rm (DS)}$\Leftrightarrow${\rm (DU)}.
\end{enumerate}
\end{corollary}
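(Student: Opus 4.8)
The plan is to obtain all three items by assembling results already established, since each reduces to the commuting dictionary provided by Theorem~\ref{main-th}, Corollary~\ref{coherent}, Proposition~\ref{equiv-lim} and Proposition~\ref{p2}. For item (1), I would first note that hereditariness of $R$ forces every submodule of a projective module to be projective. Fixing a projective presentation $0\to L\overset{\beta}\to P\to M\to 0$ with $P$ projective, the kernel $L$ is then projective as well. By Example~\ref{dev}(1) we have a natural isomorphism $\Ext^1_\CC(M,-)\cong\Dev_\beta$, and since both $L$ and $P$ are projective, Proposition~\ref{equiv-lim} applies directly and yields $\text{(DS)}\Leftrightarrow\text{(DU)}\Leftrightarrow\text{(DL)}$ at once.

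For item (2), I would treat the two implications separately. For the ``only if'' direction, suppose that $\text{(DU)}\Leftrightarrow\text{(DL)}$ holds for every right $R$-module. Any finitely presented $M$ is in particular an $\mathrm{fg}$-$\Omega^1$-object, so $\Ext^1_\CC(M,-)$ commutes with respect to direct unions by Theorem~\ref{main-th}; the hypothesis then upgrades this to commuting with respect to direct limits, and Corollary~\ref{coherent-1} concludes that $\CC=\Modr R$ is coherent, i.e. $R$ is right coherent. For the converse, assume $R$ is right coherent and let $M$ be arbitrary. In any projective resolution $(\boldP)$ the first syzygy $\Omega^1(\boldP)$ is a submodule of a projective object, hence a direct union of its finitely generated subobjects; coherence makes each such subobject finitely presented, so $\Omega^1(\boldP)$ is a direct union of finitely presented subobjects. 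The hypothesis of Corollary~\ref{coherent} is thereby met, and that corollary delivers $\text{(DU)}\Leftrightarrow\text{(DL)}$.

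For item (3), the implication $\text{(DU)}\Rightarrow\text{(DS)}$ holds over any ring, because every direct sum $\bigoplus_{i<\kappa}A_i$ is the direct union of its finite partial sums under the canonical inclusions (which are monomorphisms), exactly as recorded in the proof of Lemma~\ref{small}; hence commuting with direct unions specializes to commuting with direct sums. For the reverse implication I would invoke perfectness: if $R$ is right perfect and $\Ext^1(M,-)$ commutes with respect to direct sums, then Proposition~\ref{p2} shows $M$ is $\mathrm{fg}$-$\Omega^1$, and Theorem~\ref{main-th} then guarantees that $\Ext^1(M,-)$ commutes with respect to direct unions. Combining the two directions gives $\text{(DS)}\Leftrightarrow\text{(DU)}$.

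No single step is deep, so the bulk of the work lies in checking that the hypotheses of the quoted results are genuinely in force. The most delicate point is the coherent direction of item (2): one must verify that over a coherent ring the first syzygy of an \emph{arbitrary} module is a direct union of finitely presented subobjects, since this is precisely the clause that licenses the application of Corollary~\ref{coherent}. The other two items are essentially immediate translations of Proposition~\ref{equiv-lim} and Proposition~\ref{p2} through the isomorphism $\Ext^1_\CC(M,-)\cong\Dev_\beta$.
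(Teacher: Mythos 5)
Your proposal is correct and follows essentially the same route as the paper, which proves (1) via Proposition~\ref{equiv-lim} (using that over a hereditary ring the first syzygy is projective), (2) via Corollary~\ref{coherent-1} together with the coherent-category results, and (3) via Proposition~\ref{p2} combined with Theorem~\ref{main-th}. You have merely filled in the verifications (e.g.\ that over a coherent ring the first syzygy is a direct union of finitely presented submodules, and that direct sums are direct unions of finite partial sums) that the paper's one-line proof leaves implicit.
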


\begin{proof}
(1) is a consequence of Proposition \ref{equiv-lim} (this is also proved in \cite{Strebel}).

(2) is proved in Theorem \ref{coherent-cor} and Corollary \ref{coherent-1}.

(3) is a consequence of Proposition \ref{p2}.
\end{proof}


 \end{document}